\documentclass[11pt]{amsart}

\usepackage[T1]{fontenc}
\usepackage{lmodern}
\usepackage{microtype}
\usepackage[margin=0.9in]{geometry}
\usepackage[authoryear,round,longnamesfirst]{natbib}
\usepackage[shortlabels]{enumitem}
\usepackage{mathtools,amssymb,thmtools}
\usepackage{mathrsfs}
\usepackage[dvipsnames]{xcolor}
\usepackage[
  colorlinks=true,
  linkcolor=MidnightBlue,
  citecolor=OliveGreen,
  urlcolor=BrickRed
]{hyperref}
\usepackage[noabbrev,capitalise,nameinlink]{cleveref}

\allowdisplaybreaks

\newcommand{\Dm}{\mathcal{D}_m}
\newcommand{\Dmt}{\widetilde{\mathcal{D}}_m}
\newcommand{\HH}{\mathcal{H}}
\newcommand{\CC}{\mathbb{C}}
\newcommand{\RR}{\mathbb{R}}
\newcommand{\ZZ}{\mathbb{Z}}

\newcommand{\Ep}{E^+}
\newcommand{\Em}{E^-}

\newcommand{\sgn}{\mathrm{sgn}}
\newcommand{\id}{\mathrm{id}}

\newcommand{\Sphere}{\mathbb{S}}
\newcommand{\pauli}{\sigma}
\newcommand{\norm}[1]{\left\|#1\right\|}
\newcommand{\abs}[1]{\left|#1\right|}
\newcommand{\inner}[2]{\left\langle #1,\, #2 \right\rangle}
\newcommand{\dx}{\,\mathrm{d}x}

\newcommand{\dt}{\,\mathrm{d}t}

\newcommand{\dy}{\,\mathrm{d}y}
\DeclareMathOperator{\RE}{Re}
\newcommand{\dom}{\operatorname{dom}}
\newcommand{\PiPW}{\Pi_{\mathrm{pw}}}
\newtheorem{theorem}{Theorem}
\newtheorem{lemma}[theorem]{Lemma}
\newtheorem{proposition}[theorem]{Proposition}

\theoremstyle{definition}
\newtheorem{definition}[theorem]{Definition}
\newtheorem{example}[theorem]{Example}
\theoremstyle{remark}
\newtheorem{remark}[theorem]{Remark}

\title[Semiconvex perturbations of Dirac equations]
{Perturbation from symmetry for semiconvex lower-order partial-wave nonlinear Dirac equations}

\author{Francesco Paolo Maiale}
\address{Gran Sasso Science Institute, Viale Luigi Rendina 26--28,
67100 L'Aquila, Italy}
\email{francescopaolo.maiale@gssi.it}
\urladdr{https://orcid.org/0000-0002-3389-6937}

\subjclass[2020]{Primary 35Q40; Secondary 35J46, 35J50, 35B38, 58E05}
\keywords{Dirac functional, partial waves, perturbation from symmetry,
strongly indefinite functional, lower-order perturbation}
\date{\today}

\hypersetup{
  pdftitle={Perturbation from symmetry for semiconvex lower-order partial-wave nonlinear Dirac equations},
  pdfauthor={Francesco Paolo Maiale},
  pdfkeywords={Dirac functional, partial waves, perturbation from symmetry, strongly indefinite functional, lower-order perturbation}
}

\begin{document}
\begin{abstract}
We study strongly indefinite nonlinear Dirac functionals in the lowest partial-wave channel with radial coefficients in $\RR^3$. The symmetric model has the exact power nonlinearity $F_0(x,\psi)=b(|x|)|\psi|^p/p$, where $2<p<3$ and the radial profile $b$ is bounded, continuous, and uniformly positive. Compactness in this finite-angular-mode spinor channel and radial approximation-number estimates give quadratic growth of the symmetric minimax levels.

For a non-even localized perturbation of order $1<\tau<p/2$, we impose a semiconvexity condition whose negative curvature is strictly smaller than the spectral gap. Together with the negative quadratic part and the convex exact-power core, this makes every negative spectral fiber uniformly strongly concave. Maximizing along that fiber reduces the problem to a $C^1$ path on the positive spectral space. The polynomial Chambers--Ghoussoub--Bolle deformation theorem then yields infinitely many high-energy critical points of the restricted partial-wave functional; under the channel-invariance hypothesis these are weak solutions of the full Dirac equation. Convex perturbing primitives are covered without a smallness restriction on their amplitude.

The same reduction applies after an even Hermitian quadratic term is absorbed into a renormalized Dirac operator, provided the renormalized operator has a gap at zero and the remaining perturbation satisfies the corresponding semiconvexity bound.
\end{abstract}
\maketitle

% =====================================================================
\section{Introduction}\label{sec:intro}
% =====================================================================

We study stationary nonlinear Dirac equations of the form
\begin{equation}\label{eq:main}
  \Dm\psi(x)+V(x)\psi(x)=f(x,\psi(x)),\qquad x\in\RR^3,
\end{equation}
through their variational formulation on $H^{1/2}(\RR^3;\CC^4)$. The Dirac operator with mass $m>0$ has positive and negative spectral branches separated
by a gap, so the associated quadratic form is unbounded both from above and
from below on infinite-dimensional subspaces. This strong indefiniteness is
the first issue. The second is the loss of compactness on the
whole space, since the subcritical embeddings of $H^{1/2}(\RR^3;\CC^4)$ are
continuous but not compact. These two features distinguish nonlinear Dirac
problems from their semibounded Schr\"odinger counterparts; see
\cite{Thaller1992,EstebanSere1995,EstebanLewinSere2008} for the operator and
variational background.

Variational methods for stationary nonlinear Dirac equations go back, in
particular, to the linking construction of \citet{EstebanSere1995}. The
subsequent theory includes general strongly indefinite critical-point
methods, multiplicity results, and semiclassical concentration phenomena;
see, for instance,
\cite{BenciRabinowitz1979,KryszewskiSzulkin1998,BartschDingDirac2006,DingXu2014,BartschXu2020}.
When the nonlinear primitive is even, the natural $\ZZ_2$ symmetry makes
genus, fountain, and symmetric minimax constructions available.  The
question considered here is whether an infinite high-energy family survives
a genuinely non-even lower-order perturbation.

Our compactness mechanism is angular rather than confining. Instead of
introducing an external confining potential, we restrict the functional to one
partial-wave channel of the three-dimensional Dirac operator. In spherical
variables $x=r\omega$, a four-component spinor is viewed as an upper and a
lower two-spinor. The angular dependence of these two-spinors is described by
spinor spherical harmonics
\[
  \Omega_{\kappa\mu}:\mathbb S^2\to\CC^2,
\]
which play for the Dirac operator the role played by the scalar spherical
harmonics $Y_{\ell m}$ for the Laplacian: they diagonalize the angular
momentum and reduce radial scalar Dirac operators to radial systems in each
channel. The lowest channel used here consists of spinors of the form
\[
  \psi(r\omega)
  =
  \begin{pmatrix}
    u(r)\, \Omega_{-1,1/2}(\omega)\\
    \imath v(r) \, \Omega_{1,1/2}(\omega)
  \end{pmatrix},
\]
where $u$ and $v$ are radial coefficients and the two $\Omega$'s are fixed
normalized angular spinors. Thus the space is not the scalar radial subspace
of $H^{1/2}(\RR^3;\CC^4)$; rather, it is a finite-angular-mode spinor
subspace with two radial profiles.

This fixed channel still recovers the compactness mechanism associated with
radial symmetry. The special lowest-mode harmonics have constant angular
density, which gives
\[
  |\psi(r\omega)|^2=\tfrac1{4\pi} \bigl(|u(r)|^2+|v(r)|^2\bigr).
\]
Consequently radial exact-power nonlinearities preserve the same channel, and
the radial coefficients enjoy compact subcritical embeddings. The underlying
principle is the classical compactness recovered by radial symmetry in
\cite{Strauss1977,Lions1982}, together with its refinements in radial Besov,
Triebel--Lizorkin, and fractional Sobolev spaces of
\cite{SickelSkrzypczak2000,SickelSkrzypczakVybiral2012}. The partial-wave
decomposition itself is standard in Dirac spectral theory (see, e.g.,~\cite{Thaller1992}). For the present multiplicity argument, compactness alone
is not enough: we also use the approximation-number asymptotics of
\cite{DotaSkrzypczak2025}. After transferring those estimates through the
fixed angular spinor modes, they yield the polynomial tail bound that is
responsible for quadratic growth of the symmetric minimax levels.

The loss of evenness has a long history in critical-point theory. Early
perturbation-from-symmetry methods were developed by
\cite{Struwe1980,BahriBerestycki1981,Rabinowitz1982}; the deformation theory
was subsequently refined by \cite{Bolle1999,BolleGhoussoubTehrani2000,ChambersGhoussoub2001}. We use the convenient $C^1$ polynomial-growth
formulation recorded by \cite{SalvatoreSquassina2003}.  Related
perturbation-from-symmetry results for strongly indefinite elliptic systems
were obtained by \cite{Tarsi2006}. A direct application of such a theorem to
a Dirac functional is nevertheless not automatic: the infinite-dimensional
negative spectral space remains present throughout the deformation and must
be controlled uniformly.

The aim of this work is to extend the results obtained in \citet{GeorgievMaiale2018}, which proved the
existence of a nonzero radially symmetric critical point for a Dirac
functional with broken symmetry. The present work, however, addresses a different
multiplicity question: under a structured lower-order symmetry breaking, can
one retain an unbounded sequence of critical values? Our assumptions and
those of \cite{GeorgievMaiale2018} are not nested, so the result below is best
viewed as complementary rather than as a formal strengthening in every
respect. Its new feature is the persistence of infinitely many high-energy
partial-wave states.

The main ingredient is a global reduction along the negative spectral fibers.
For every parameter $\vartheta\in[0,1]$ and positive spectral component
$x\in X^+$, we maximize the energy over $x+X^-$. The negative quadratic
part, the convex exact-power core, and a semiconvexity bound on the perturbing
primitive, with negative curvature strictly below the spectral gap, make every
such fiber uniformly strongly concave. Hence the maximizer
$h_\vartheta(x)\in X^-$ is unique, depends continuously on $(\vartheta,x)$,
and defines a jointly $C^1$ reduced family
\[
  J_\vartheta(x) = E_\vartheta\bigl(x+h_\vartheta(x)\bigr)
  \qquad\text{on }X^+.
\]
Critical points and critical values of the reduced family correspond exactly
to critical points and critical values of the functional restricted to the
chosen partial-wave space; the channel-invariance hypothesis \textnormal{(X3)} then upgrades
these to full weak solutions. In this way the strongly indefinite problem is
converted into a deformation problem on the positive Hilbert space, where the
polynomial perturbation-from-symmetry theorem applies.

The proof combines four ingredients. First, the fiber reduction
preserves the joint Palais--Smale property and gives uniform
finite-dimensional anti-coercivity. Second, the lower-order perturbation
produces a critical velocity bounded by $C(1+|c|^{\tau/p})$, hence
asymptotically of order $|c|^{\tau/p}$. Third, radial
approximation numbers give
\[
  c_k\ge C_1k^2-C_2
\]
for the symmetric reduced minimax levels. Finally, the deformation theorem
compares this quadratic growth with the scalar-flow exponent
$1/(1-\tau/p)$. The strict inequality
\[
  2>\tfrac1{1-\tau/p}
  \iff
  \tau<\tfrac p2
\]
is therefore the natural threshold of this method.

\subsection*{Main result and scope}

Let
\[
  F_0(x,\psi)=\tfrac{b(|x|)}p|\psi|^p,
  \qquad b\in C_b([0,\infty);\RR),
  \quad 0<b_0\le b(r)\le b_1<\infty,
  \quad 2<p<3,
\]
and let $G(x,\psi)$ be a localized Carath\'eodory primitive, $C^1$ in the
spinor variable, of order $1<\tau<p/2$, not necessarily even. Besides the weighted growth conditions,
assume that for some non-negative $\mu\in L^\infty(\RR^3)$,
\[
 \RE\langle g(x,z)-g(x,w),z-w\rangle
 \ge-\mu(x)|z-w|^2,
 \qquad \|\mu\|_\infty<\lambda_1,
\]
where $g=\nabla_\psi^{\RR}G$ and $\lambda_1$ is the spectral-gap constant.
Assume also that the scalar potential and the partial-wave space satisfy the
spectral-gap and channel-compatibility hypotheses stated in~\cref{sec:assumptions}. Then the restricted
partial-wave functional has infinitely many critical points whose energies
tend to $+\infty$; see \cref{thm:partialwave_lower_order}. Under the
channel-invariance condition, these restricted critical points are full weak
solutions. When $G(x,\cdot)$ is convex the condition holds with $\mu=0$, so
no smallness is imposed on the amplitude of the convex perturbation.

The hypotheses also indicate the present scope. We work in the lowest
partial-wave channel with radial coefficients, use an
exact-power even core, and require the non-even
term to be both lower order and semiconvex relative to the spectral gap. We
do not claim persistence for arbitrary non-even perturbations, for which the
negative-fiber maximizer may cease to be unique. An even Hermitian quadratic
term can, however, be absorbed into a renormalized Dirac operator. The same
conclusion then holds whenever the renormalized operator has a gap at zero and
the remaining perturbation satisfies the corresponding semiconvexity bound;
see \cref{thm:renormalized_even_quadratic}.

\subsection*{Organization of the paper}

\cref{sec:setting} introduces the Dirac splitting and energy space.
\cref{sec:partialwaves} constructs the lowest partial-wave compactness space
and transfers radial approximation estimates to the spinor channel.
\cref{sec:assumptions} states the nonlinear hypotheses, including
semiconvexity of the perturbation.  \cref{sec:symmetric} proves the symmetric
fountain theorem.  \cref{sec:perturbation_symmetry} develops the
negative-fiber reduction, verifies the deformation velocities, and proves
quadratic growth of the reduced symmetric levels.
\cref{sec:lower_order_reduced} proves the non-even lower-order theorem, and
\cref{sec:renormalized_quadratic} treats even quadratic terms by renormalizing
the Dirac operator.

% =====================================================================

\section{Functional setting}\label{sec:setting}
% =====================================================================

The Dirac operator with mass $m > 0$ on $\RR^N$, for $N \geq 3$, acts on spinor-valued functions $\psi : \RR^N \to \CC^d$ and is given by
\[
  \Dm \coloneqq -\imath \sum_{j=1}^{N} \alpha^j\partial_j + m\beta, \qquad d \coloneqq 2^{\lfloor (N+1)/2 \rfloor}.
\]
The matrices $\alpha^1,\ldots,\alpha^N,\beta\in\CC^{d\times d}$ are Hermitian and satisfy the following anti-commutator relations:
\begin{equation}\label{eq:clifford_relations}
  \alpha^j\alpha^k+\alpha^k\alpha^j=2\delta_{jk}I_d, \qquad \alpha^j\beta+\beta\alpha^j=0, \qquad  \beta^2=I_d.
\end{equation}
With these conventions $\Dm$ is self-adjoint on $L^2(\RR^N;\CC^d)$, its domain is $H^1(\RR^N;\CC^d)$, and its spectrum is
\[
  \sigma(\Dm) = (-\infty,-m] \cup [m,+\infty).
\]
Let $V : \RR^N \to \RR$ be a real-valued scalar potential with
$V\in L^\infty(\RR^N)$, acting on spinors as the multiplication operator
$V(x) I_d$, and define
\[
  \Dmt \coloneqq \Dm + V(x) I_d.
\]
Since $V \in L^\infty(\RR^N)$, $\Dmt$ is a bounded
self-adjoint perturbation of $\Dm$. Hence $\Dmt$ is self-adjoint on
$L^2(\RR^N;\CC^d)$ with domain $H^1(\RR^N;\CC^d)$, and the graph norms of
$\Dm$ and $\Dmt$ are equivalent. Therefore the interpolation spaces
\[
  \bigl[L^2(\RR^N;\CC^d),\dom(\Dm)\bigr]_{1/2}
  \quad\text{and}\quad
  \bigl[L^2(\RR^N;\CC^d),\dom(\Dmt)\bigr]_{1/2}
\]
coincide with equivalent norms. Since, for a self-adjoint operator $A$,
\[
  \bigl[L^2,\dom(A)\bigr]_{1/2}=\dom\bigl(|A|^{1/2}\bigr)
\]
with equivalent norms, we obtain
\[
 \dom \bigl(|\Dmt|^{1/2}\bigr)=H^{1/2}(\RR^N;\CC^d)
\]
with equivalence of the full form graph norm and the standard
$H^{1/2}$-norm. In the abstract formulation below this property is recorded
as assumption~\textnormal{(V2)}.

Throughout the remainder of this section we assume the following \textbf{spectral-gap condition}:
\begin{equation}\label{eq:lambda1_gap_setting}
  \lambda_1\coloneqq\operatorname{dist}(0,\sigma(\Dmt))>0.
\end{equation}
It ensures that $\sgn(\Dmt)^2=\id$, so that the operators $P_m$ and $Q_m$
defined below in~\eqref{eq:PmQm} are genuine idempotents. It also bounds
$|\Dmt|$ away from zero, so the homogeneous form norm used below is equivalent
to the full form graph norm and hence to the standard $H^{1/2}$-norm. We then define the spectral projections by functional calculus:
\begin{equation}\label{eq:PmQm}
  P_m \coloneqq \frac{\id + \sgn(\Dmt)}{2}, \qquad Q_m \coloneqq \frac{\id - \sgn(\Dmt)}{2}.
\end{equation}
Thus $P_m^2 = P_m$, $Q_m^2 = Q_m$, $P_m + Q_m = \id$, and $P_m - Q_m = \sgn(\Dmt)$. Although the notation displays only the mass, $P_m$ and $Q_m$ are always the spectral projections of the perturbed operator $\Dmt$.

\subsection{The energy space}

We work on the fractional Sobolev space $\HH \coloneqq H^{1/2}(\RR^N;\CC^d)$, endowed with the real Hilbert inner product
\[
  (\psi,\varphi)_\HH \coloneqq \RE\inner{\abs{\Dmt}^{1/2}\psi}{\abs{\Dmt}^{1/2}\varphi}_{L^2},
\]
and the induced norm
\[
  \norm{\psi} \coloneqq (\psi,\psi)_\HH^{1/2}.
\]
By the spectral gap assumption and the boundedness of $V$, this norm is equivalent to the standard $H^{1/2}$-norm.

Note that $P_m$, $Q_m$, and $\abs{\Dmt}^{1/2}$ are obtained from the same
self-adjoint operator by functional calculus. Hence $P_m$ and $Q_m$ preserve
$\dom \bigl(|\Dmt|^{1/2}\bigr)=\HH$ and commute with $\abs{\Dmt}^{1/2}$ on
this domain.

\begin{proposition}\label{prop:spectral_split}
The operators $P_m$ and $Q_m$ restrict to continuous projections on $\HH$, and the space decomposes as
\begin{equation}\label{eq:Hdecomp}
  \HH = \Ep \oplus \Em, \qquad \Ep \coloneqq P_m(\HH), \qquad \Em \coloneqq Q_m(\HH).
\end{equation}
Consequently, every $\psi \in \HH$ decomposes uniquely as
\[
  \psi = \psi^+ + \psi^-, \qquad \psi^{\pm} \in E^\pm,
\]
and the decomposition is orthogonal with respect to $(\cdot,\cdot)_\HH$:
\[
  \norm{\psi}^2 = \norm{\psi^+}^2 + \norm{\psi^-}^2.
\]
\end{proposition}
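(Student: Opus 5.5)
The plan is to work entirely through the spectral calculus of the self-adjoint operator $\Dmt$ and to use the identification $\HH=\dom\bigl(|\Dmt|^{1/2}\bigr)$ recorded above. Write $S\coloneqq\sgn(\Dmt)$. By the gap condition~\eqref{eq:lambda1_gap_setting}, $0\notin\sigma(\Dmt)$, so $S$ is a bounded self-adjoint unitary on $L^2$ with $S^2=\id$. Hence $P_m$ and $Q_m$ are complementary orthogonal projections on $L^2$.

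\textbf{Step 1: $P_m$ and $Q_m$ act on $\HH$.} Since $S$ and $|\Dmt|^{1/2}$ are Borel functions of the same operator, $S$ maps $\dom(|\Dmt|^{1/2})$ into itself and satisfies $|\Dmt|^{1/2}S\psi=S|\Dmt|^{1/2}\psi$ for $\psi\in\HH$. This is the commutation property noted just before the statement. It follows that $P_m\HH\subset\HH$, $Q_m\HH\subset\HH$, and
\[
\|P_m\psi\|=\bigl\|P_m|\Dmt|^{1/2}\psi\bigr\|_{L^2}\le\bigl\||\Dmt|^{1/2}\psi\bigr\|_{L^2}=\|\psi\|,
\]
and likewise for $Q_m$. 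So both restrictions are bounded, with norm at most $1$, in the $\HH$-norm. The identities $P_m^2=P_m$, $Q_m^2=Q_m$, $P_m+Q_m=\id$ and $P_mQ_m=0$ hold on $L^2$ and therefore also on $\HH$.

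\textbf{Step 2: direct sum and uniqueness.} Put $\Ep=P_m\HH$ and $\Em=Q_m\HH$. Every $\psi\in\HH$ splits as $\psi=P_m\psi+Q_m\psi$. For uniqueness, suppose $\psi^++\psi^-=0$ with $\psi^\pm\in E^\pm$. Writing $\psi^+=P_m a$ and $\psi^-=Q_m b$ and applying $P_m$ gives $\psi^+=P_m\psi^+=-P_mQ_m b=0$, and then $\psi^-=0$. The subspaces are closed, being the kernels $\ker Q_m$ and $\ker P_m$ of bounded operators on $\HH$.

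\textbf{Step 3: orthogonality.} Using the commutation from Step 1 and self-adjointness of $P_m$ on $L^2$, for $\psi,\varphi\in\HH$ one gets
\[
(P_m\psi,Q_m\varphi)_\HH=\RE\bigl\langle |\Dmt|^{1/2}\psi,\;P_mQ_m|\Dmt|^{1/2}\varphi\bigr\rangle_{L^2}=0,
\]
since $P_mQ_m=0$. Expanding $\|\psi^++\psi^-\|^2$ then gives the Pythagorean identity.

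The only point requiring care is the commutation and domain-preservation statement in Step 1. It follows from the functional calculus, since products of bounded Borel functions with $|\lambda|^{1/2}$ are computed pointwise on the spectral measure. Everything else is routine algebra.
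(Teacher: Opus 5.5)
Your proof is correct and follows essentially the same route as the paper. In both, contractivity of $P_m,Q_m$ on $\HH$ comes from their commutation with $|\Dmt|^{1/2}$, the splitting from $P_m+Q_m=\id$ with uniqueness from $P_mQ_m=0$, and orthogonality from the same commutation. Your additions (closedness of $\Ep,\Em$ as kernels, and the explicit use of the self-adjointness of $P_m$ in the orthogonality step) only spell out what the paper leaves implicit.
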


\begin{proof}
Let $\psi \in \HH$. Since $P_m$ commutes with $\abs{\Dmt}^{1/2}$ and is bounded on $L^2$, we have
\[
  \norm{P_m\psi}^2 = \norm{\abs{\Dmt}^{1/2} P_m\psi}_{L^2}^2
   = \norm{P_m \abs{\Dmt}^{1/2}\psi}_{L^2}^2  \le \norm{\abs{\Dmt}^{1/2}\psi}_{L^2}^2 = \norm{\psi}^2.
\]
Hence $P_m$ maps $\HH$ continuously into itself. The same argument applies to the other projection $Q_m$. Now define
\[
  \psi^+ \coloneqq P_m\psi, \qquad \psi^- \coloneqq Q_m\psi.
\]
Since $P_m + Q_m = \id$, we have $\psi = \psi^+ + \psi^-$; moreover, the decomposition is unique because $P_m Q_m = Q_m P_m = 0$.

Finally, since $P_m$, $Q_m$, and $\abs{\Dmt}^{1/2}$ commute, and $P_m Q_m = 0$, we get
\[
  (\psi^+,\psi^-)_\HH = \RE\inner{\abs{\Dmt}^{1/2}P_m\psi}{\abs{\Dmt}^{1/2}Q_m\psi}_{L^2} = \RE\inner{P_m\abs{\Dmt}^{1/2}\psi}{Q_m\abs{\Dmt}^{1/2}\psi}_{L^2} = 0,
\]
which implies $\norm{\psi}^2 = \norm{\psi^+}^2 + \norm{\psi^-}^2$.
\end{proof}

\subsection{The compactness subspace}\label{ssec:compactness}

To recover compactness on the unbounded domain $\RR^N$, we work on a closed subspace $X \subset \HH$ satisfying the following assumptions. First, define the critical exponent
\[
  2^*_{1/2} \coloneqq \frac{2N}{N-1},
\]
as well as the projected spaces:
\[
  X^+ \coloneqq P_m(X), \qquad X^- \coloneqq Q_m(X).
\]
We assume that $X$ satisfies the following two assumptions:
\begin{enumerate}
\item[\textnormal{(X1)}]\label{it:X1}
  $X^\pm \subset X$ and both $X^+$ and $X^-$ are infinite-dimensional.

\item[\textnormal{(X2)}]\label{it:X2}
  For every $q \in (2,2^*_{1/2})$, the embedding $X \hookrightarrow L^q(\RR^N;\CC^d)$ is continuous and compact.
\end{enumerate}

We equip $X$ with the norm induced by $\HH$. Then $X$ also decomposes orthogonally with respect to the induced inner product, as shown by the next proposition.

\begin{proposition}\label{prop:compact}
Assume that $X \subset \HH$ is a closed subspace satisfying
\textnormal{(X1)}. Then
\[
  X^+=X\cap E^+, \qquad X^-=X\cap E^-.
\]
In particular, $X^+$ and $X^-$ are closed subspaces of $X$. Moreover,
\[
  X = X^+ \oplus X^-,
\]
and the decomposition is orthogonal with respect to $(\cdot,\cdot)_\HH$. Hence every $\psi \in X$ decomposes uniquely as
\[
  \psi = \psi^+ + \psi^-, \qquad \psi^\pm \in X^\pm.
\]
\end{proposition}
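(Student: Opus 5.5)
The plan is to prove the two set identities first and then read off closedness, the decomposition and orthogonality from \cref{prop:spectral_split}. Everything rests on the idempotent identities $P_m^2=P_m$, $Q_m^2=Q_m$, $P_mQ_m=Q_mP_m=0$, $P_m+Q_m=\id$ on $\HH$, together with the inclusion $X^\pm\subset X$ from \textnormal{(X1)}.

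\emph{Step 1: $X^+=X\cap E^+$.}
\begin{itemize}
\item[$\subseteq$:] If $\varphi\in X^+=P_m(X)$, then $\varphi\in X$ by \textnormal{(X1)}. Also $\varphi\in P_m(\HH)=E^+$ because $X\subset\HH$.
\item[$\supseteq$:] If $\varphi\in X\cap E^+$, write $\varphi=P_m\chi$ with $\chi\in\HH$. Then $P_m\varphi=P_m^2\chi=\varphi$, so $\varphi=P_m\varphi\in P_m(X)=X^+$.
\end{itemize}
The same argument with $Q_m$ gives $X^-=X\cap E^-$.

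\emph{Step 2: closedness.} $E^+=P_m(\HH)$ equals $\ker Q_m=\ker(\id-P_m)$, since every $\varphi$ with $Q_m\varphi=0$ satisfies $\varphi=P_m\varphi$. Because $Q_m$ is continuous on $\HH$ by \cref{prop:spectral_split}, this kernel is closed. Hence $X\cap E^+$ is closed in $\HH$, and therefore closed in $X$. The same holds for $X^-$.

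\emph{Step 3: decomposition and orthogonality.} For $\psi\in X$, set $\psi^\pm$ as in \cref{prop:spectral_split}, namely $\psi^+=P_m\psi$ and $\psi^-=Q_m\psi$. These lie in $X^\pm$ by definition, and $\psi=\psi^++\psi^-$. Uniqueness comes from $X^+\cap X^-\subset E^+\cap E^-=\{0\}$: any $\varphi$ in both satisfies $\varphi=P_m\varphi=P_mQ_m\varphi=0$. Orthogonality with respect to $(\cdot,\cdot)_\HH$ is inherited directly from the orthogonality of $E^+$ and $E^-$ established in \cref{prop:spectral_split}.

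There is no genuine obstacle here. The only point needing care is the use of \textnormal{(X1)} to guarantee that $P_m\psi$ stays in $X$ and not merely in $\HH$; without it the splitting of $X$ could fail.
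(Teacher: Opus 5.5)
Your proposal is correct and follows the paper's proof essentially step for step: both set identities come from \textnormal{(X1)} and idempotency, the splitting is $\psi=P_m\psi+Q_m\psi$, directness comes from $E^+\cap E^-=\{0\}$, and orthogonality is inherited from \cref{prop:spectral_split}. The only addition is that you justify closedness of $E^\pm$ explicitly, as the kernel of the continuous projection $Q_m$ (resp.\ $P_m$); the paper simply asserts that $E^\pm$ is closed.
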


\begin{proof}
We first identify the projected spaces with intersections. If $u\in X^+$,
then $u=P_mx$ for some $x\in X$. Hence $u\in E^+$, and by
\textnormal{(X1)} also $u\in X$; therefore $u\in X\cap E^+$. Conversely, if
$u\in X\cap E^+$, then $u=P_mu\in P_m(X)=X^+$. Thus $X^+=X\cap E^+$. The
same argument gives $X^-=X\cap E^-$. Since $X$ and $E^\pm$ are closed in
$\HH$, the spaces $X^\pm$ are closed in $X$.

By definition and \textnormal{(X1)}, $X^\pm\subset X$, so
$X^+ + X^-\subset X$. Conversely, let $\psi \in X$. Again by
\textnormal{(X1)}, both $P_m\psi$ and $Q_m\psi$ belong to $X$, and
\[
  \psi = P_m\psi + Q_m\psi \in X^+ + X^-.
\]
Thus $X=X^+ + X^-$. Since $X^+\subset E^+$ and $X^-\subset E^-$, we have
\[
\phi \in X^+ \cap X^- \implies \phi \in E^+ \cap E^- = \{0\}
\]
by \eqref{eq:Hdecomp}, so the sum is direct. Orthogonality follows from~\cref{prop:spectral_split}.
\end{proof}

% =====================================================================
\section{A concrete partial-wave compactness space}\label{sec:partialwaves}
% =====================================================================

In this section, we construct a concrete model in which the working-space assumptions~\textnormal{(X1)}--\textnormal{(X2)}, as well as the gradient-invariance condition~\textnormal{(X3)} introduced in~\cref{sec:assumptions}, can be checked. The construction is a standard partial-wave reduction for the three-dimensional Dirac operator. We recall the setting in some detail, because this is the point where the abstract compactness space becomes concrete.

A four-component Dirac spinor in dimension three is written as an upper two-spinor and a lower two-spinor. In spherical variables $x=r\omega$, the angular part of the Dirac operator is diagonalized by the spinor spherical harmonics $\Omega_{\kappa\mu}$. The labels $(\kappa,\mu)$ play the role of fixed angular-momentum quantum numbers. For radial scalar potentials, angular momentum is not mixed: each partial-wave channel is invariant under the Dirac operator. Thus, after fixing one channel, a spinor is described by only two radial scalar profiles, one profile for the upper component and one for the lower component.

We specialize to the lowest channel. Let $N=3$, $d=4$, and use the Dirac matrices
\[
  \alpha^j = \begin{pmatrix}0&\pauli_j\\ \pauli_j&0\end{pmatrix}, \qquad \beta = \begin{pmatrix}I_2&0\\0&-I_2\end{pmatrix},
\]
where $\pauli_1,\pauli_2,\pauli_3$ are the \textit{Pauli matrices}.
The symbols $\Omega_{\kappa\mu}$ denote the \textit{spinor spherical harmonics}, which are two-component angular spinors, that is, maps
\[
  \Omega_{\kappa\mu}:\Sphere^2\longrightarrow \CC^2,
\]
playing for the Dirac operator the same role that the scalar spherical harmonics $Y_{\ell m}$ play for the Laplacian. The index $\kappa\in\ZZ\setminus\{0\}$ labels the Dirac angular channel, while $\mu$ is the magnetic quantum number. The family $\{\Omega_{\kappa\mu}\}$ is an orthonormal basis of $L^2(\Sphere^2;\CC^2)$ adapted to total angular momentum. For the present paper we only need the lowest angular-momentum pair. Specifically, writing
\[
  \omega=(\sin\theta\cos\varphi,\sin\theta\sin\varphi,\cos\theta),
\]
we choose the phase convention
\begin{equation}\label{eq:lowest_spinor_spherical_harmonics}
  \Omega_{-1,1/2}(\omega)
  =\tfrac{1}{\sqrt{4\pi}}\binom{1}{0},
  \qquad
  \Omega_{1,1/2}(\omega)
  =-\tfrac{1}{\sqrt{4\pi}}
  \binom{\cos\theta}{e^{\imath\varphi}\sin\theta}.
\end{equation}
The first one is used for the upper two-spinor and the second one for the lower two-spinor. Other phase conventions lead to the same partial-wave space. With the convention above, the two harmonics are coupled by the radial Dirac operator through the identities
\[
  (\pauli\cdot\omega) \, \Omega_{-1,1/2}(\omega) =-\Omega_{1,1/2}(\omega), \qquad
  (\pauli\cdot\omega) \, \Omega_{1,1/2}(\omega) =-\Omega_{-1,1/2}(\omega).
\]
The special feature of this lowest channel is that both angular profiles have constant pointwise density:
\begin{equation}\label{eq:lowest_channel_constant_density}
  |\Omega_{-1,1/2}(\omega)|^2 = |\Omega_{1,1/2}(\omega)|^2 = \tfrac{1}{4\pi}.
\end{equation}
Consequently, if $\psi$ belongs to the channel defined below, then
\[
  |\psi(r\omega)|^2=\frac{|u(r)|^2+|v(r)|^2}{4\pi},
\]
so the pointwise density depends only on $r$. This simple observation is crucial later: radial scalar nonlinearities such as $b(r)|\psi|^{p-2}\psi$ preserve the same channel. The factor $\imath$ in the lower component is only a conventional choice; it is convenient because it gives the associated radial Dirac system the usual real form when the radial profiles are real.

\begin{definition}[The lowest partial-wave subspace]
Let $X_{\rm pw}$ denote the closure in $H^{1/2}(\RR^3;\CC^4)$ of the set of all smooth compactly supported spinors of the form
\begin{equation}\label{eq:Xpw_definition}
  \psi(r\omega)
  =
  \begin{pmatrix}
    u(r)\Omega_{-1,1/2}(\omega)\\
    \imath v(r)\Omega_{1,1/2}(\omega)
  \end{pmatrix},
  \qquad
  u,v\in C_c^\infty((0,\infty);\CC),
\end{equation}
where $r=|x|$ and $\omega=x/|x|$.
\end{definition}

At the $L^2$ level, the same angular ansatz with
$u,v\in L^2((0,\infty),r^2\,\mathrm dr;\CC)$ defines a closed channel, denoted
\[
  \mathcal H_{\rm pw}\coloneqq\mathcal H_{-1,1/2}.
\]
The proposition below proves, in particular, that
\[
  X_{\rm pw}=H^{1/2}(\RR^3;\CC^4)\cap\mathcal H_{\rm pw},
\]
so $X_{\rm pw}$ is simply the finite-energy part of this fixed $L^2$ partial-wave channel.

\begin{remark}
For our main result, the working space is exactly
\[
  X=X_{\rm pw}.
\]
Thus $X_{\rm pw}$ should be read as a symmetry-reduced finite-energy Dirac space: the angular dependence is fixed once and for all, while the two radial profiles remain free. The space is still infinite-dimensional, but it no longer contains arbitrary spatial translates of a fixed bump. This is why the compact Sobolev embedding, false on the full space $H^{1/2}(\RR^3;\CC^4)$, becomes true on $X_{\rm pw}$ for subcritical exponents. Since radial scalar Dirac operators reduce this channel, the spectral projections used in the strongly indefinite splitting preserve it. Later, the channel-invariance condition \textnormal{(X3)} ensures that critical points found in this reduced space are not merely constrained solutions but full weak solutions of~\eqref{eq:main}.
\end{remark}

\begin{proposition} \label{prop:Xpw_X1_X2}
Assume that the potential $V$ satisfies
\[
  V(x)=V_{\rm rad}(|x|), \qquad
  V_{\rm rad} \in L^\infty \bigl((0,\infty);\RR\bigr), \qquad
  \lim_{R\to\infty}\|V_{\rm rad}\|_{L^\infty((R,\infty))}=0,
\]
and assume that $0$ lies in a spectral gap of the perturbed operator, namely
\[
\Dmt=-\imath\alpha\cdot\nabla+m\beta+V_{\rm rad}(|x|)I_4,
\qquad
\lambda_1 := \operatorname{dist}(0,\sigma(\Dmt))>0.
\]
Then $X=X_{\rm pw}$ is a closed infinite-dimensional subspace of $\HH$ satisfying the properties~\textnormal{(X1)}--\textnormal{(X2)}. Hence, for every $q\in(2,3)$, the following embedding is compact:
\[
  X_{\rm pw}\hookrightarrow L^q(\RR^3;\CC^4).
\]
\end{proposition}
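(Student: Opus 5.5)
The proof has three parts: closedness and infinite dimension, invariance of the channel under the spectral projections (X1), and the compact embedding (X2).

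\emph{Closedness and infinite dimension.} $X_{\rm pw}$ is closed because it is defined as an $H^{1/2}$-closure. By \cref{sec:setting} the norm $\norm{\cdot}$ is equivalent to the standard $H^{1/2}$-norm under the gap hypothesis, so $X_{\rm pw}$ is a closed subspace of $\HH$.

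\emph{Invariance under the spectral projections (X1).} The key structural fact is that $\mathcal H_{\rm pw}$ reduces $\Dmt$. Write $\Dmt$ in spherical coordinates using
\[
  \alpha\cdot\nabla=(\alpha\cdot\omega)\bigl(\partial_r+r^{-1}-r^{-1}\beta K\bigr).
\]
Together with the identities $(\pauli\cdot\omega)\Omega_{\mp1,1/2}=-\Omega_{\pm1,1/2}$, this shows that on spinors of the form \eqref{eq:Xpw_definition} the operator $\Dmt$ acts as a radial $2\times2$ system in $(u,v)$:
\[
  \begin{pmatrix} m+V_{\rm rad} & -\partial_r+\kappa/r\\ \partial_r+\kappa/r & -m+V_{\rm rad}\end{pmatrix},
\]
up to the $r$-weight and sign conventions. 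Since $V$ is radial, the orthogonal projection $\Pi_{\rm pw}$ onto $\mathcal H_{\rm pw}$ commutes with the rotation generators $J$ and $K$, hence with $\Dmt$ (in the resolvent sense). By functional calculus, $\sgn(\Dmt)$, $P_m$, $Q_m$ and $\abs{\Dmt}^{1/2}$ all commute with $\Pi_{\rm pw}$.

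Next I identify $X_{\rm pw}=\HH\cap\mathcal H_{\rm pw}$. The inclusion $\subset$ holds because $\mathcal H_{\rm pw}$ is $L^2$-closed. For the reverse inclusion, take $\psi\in\HH\cap\mathcal H_{\rm pw}$ and mollifiers $\psi_n\in C_c^\infty$ with $\psi_n\to\psi$ in $H^{1/2}$. The projection $\Pi_{\rm pw}$ is bounded on $H^{1/2}$, since it commutes with $\abs{\Dmt}^{1/2}$. Moreover, $\Pi_{\rm pw}\psi_n$ has smooth radial profiles and can be approximated by profiles in $C_c^\infty((0,\infty))$: the cutoff near the origin is harmless in $H^{1/2}$ in three dimensions, because a point has zero $H^{1/2}$-capacity. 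Hence $\psi\in X_{\rm pw}$.

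With this identification, $P_m X\subset \HH\cap\mathcal H_{\rm pw}=X$, and likewise $Q_mX\subset X$. For infinite dimension of $X^+$ and $X^-$: the reduced radial operator has essential spectrum $(-\infty,-m]\cup[m,\infty)$, by Weyl's theorem, since $V_{\rm rad}\to0$ at infinity makes the potential a relatively compact perturbation. Each half-line then carries infinite-dimensional spectral subspaces inside the channel.

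\emph{Compact embedding (X2).} Take $q\in(2,3)$ and a bounded sequence $\psi_n$ in $X$. The density identity gives
\[
  \norm{\psi_n}_{L^q(\RR^3)}^q=c\int_0^\infty \bigl(|u_n|^2+|v_n|^2\bigr)^{q/2}r^2\,\mathrm dr.
\]
\begin{itemize}
\item \emph{Bounded regions.} On $B_R$, the Rellich compactness of $H^{1/2}(B_R)\hookrightarrow L^q(B_R)$ for $q<3$ gives a subsequence converging in $L^q(B_R)$.
\item \emph{Tails.} The obstacle is a uniform bound at infinity. I would use a Strauss-type decay: functions in this channel have $|\psi|$ radial. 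The map $\psi\mapsto|\psi|$ is bounded on $H^{1/2}$ (pointwise $||a|-|b||\le|a-b|$ in the Gagliardo seminorm), so $|\psi_n|$ is a bounded sequence in $H^{1/2}_{\rm rad}(\RR^3)$. Compactness of $H^{1/2}_{\rm rad}\hookrightarrow L^q$ for $2<q<3$ (Lions; Sickel--Skrzypczak) then gives
\[
  \sup_n\norm{\psi_n}_{L^q(|x|>R)}\to0 \quad\text{as } R\to\infty.
\]
\end{itemize}
A diagonal argument combining the two steps yields a subsequence converging in $L^q(\RR^3)$. Continuity of the embedding follows from the Sobolev embedding.

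The main obstacle I expect is the rigorous reduction and commutation step in (X1), namely showing that $\Pi_{\rm pw}$ commutes with $\sgn(\Dmt)$ and is bounded on $H^{1/2}$. I would handle it cleanly by noting that $\Dmt$ commutes with the unitary representation of $SU(2)$ and with the spin-orbit operator $K=\beta(2S\cdot L+1)$. The channel is the joint eigenspace of $J^2$, $J_3$ and $K$, so commutation follows from the resolvent identities.
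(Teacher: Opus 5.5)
Your proposal is correct. For (X1) it is essentially the paper's argument: the channel reduces $\Dmt$, so $P_m$ and $Q_m$ preserve $H^{1/2}\cap\mathcal H_{\rm pw}=X_{\rm pw}$, and Weyl's theorem gives infinite rank. The variations there are cosmetic:
\begin{itemize}
\item you obtain the reduction from commutation of $\Dmt$ with $J^2,J_3,K$ instead of citing the partial-wave decomposition;
\item you get $H^{1/2}$-boundedness of $\PiPW$ from commutation with $|\Dmt|^{1/2}$ rather than with Bessel potentials;
\item you remove the origin using zero $H^{1/2}$-capacity of a point, rather than the explicit bound $\|(1-\chi_\varepsilon)g_j\|_{H^{1/2}}\le C_j\varepsilon$.
\end{itemize}
Two small points. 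Your sentence ``$\PiPW$ commutes with $J$ and $K$, hence with $\Dmt$'' has the logic reversed; the correct version is the one in your last paragraph. Also, infinite rank in $L^2$ passes to $X^+=P_mX$ and $X^-=Q_mX$ because $X$ is $L^2$-dense in $\mathcal H_{\rm pw}$; the paper does this by taking vectors of bounded spectral support.

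The genuine difference is in (X2). The paper goes through \cref{lem:finite_angular_width_transfer}. There it builds an isomorphism $\mathscr U$ from $RH^{1/2}_2\oplus RH^{1/2}_2$ onto $X_{\rm pw}$, using $L^2$ and $H^1$ endpoint identities (with Hardy's inequality) and complex interpolation, and then applies scalar radial compactness to each profile. You use only the fact that $|\psi|$ is radial. Since $\bigl||a|-|b|\bigr|\le|a-b|$, the modulus sends bounded sets of $X_{\rm pw}$ to bounded sets of $H^{1/2}_{\rm rad}(\RR^3)$. Compactness of that space in $L^q$ gives uniformly small $L^q$-tails for $(\psi_n)$, and local Rellich plus a diagonal argument finishes. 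Your route is shorter, needs no identification of the profile norms, and works for any subspace on which $|\psi|$ is radial. Its limitation is that the modulus map is nonlinear, so it yields only qualitative compactness. The paper's linear isomorphism is what later transfers the approximation-number bound $a_j\le Cj^{-(p-2)/p}$ to $X_{\rm pw}$. That bound drives the quadratic level growth in \cref{prop:reduced_level_growth}.
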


\begin{proof}
More generally, let $\mathcal H_{\kappa,\mu}$ denote the closed $L^2$-subspace with angular profile
\[
  \begin{pmatrix} u(r)\Omega_{\kappa,\mu}(\omega)\\
  \imath v(r)\Omega_{-\kappa,\mu}(\omega)\end{pmatrix},
  \qquad u,v\in L^2 \bigl((0,\infty),r^2\,\mathrm dr;\CC\bigr).
\]
The spinor spherical harmonics form an orthonormal basis of $L^2(\Sphere^2;\CC^2)$; hence
\[
  L^2(\RR^3;\CC^4) = \bigoplus_{(\kappa,\mu)}\mathcal H_{\kappa,\mu}.
\]
The corresponding angular projections commute with the Bessel-potential operator $(1-\Delta)^{s/2}$ for every $s\in\RR$; therefore, they are bounded on $H^s(\RR^3;\CC^4)$, and in particular
\[
  X_{\rm pw}=\mathcal H_{-1,1/2}\cap H^{1/2}(\RR^3;\CC^4).
\]
This identity also shows that $X_{\rm pw}$ is closed in $\HH$. We now justify
the density of the compactly supported smooth coefficient pairs in this
intersection. Let $\PiPW$ be the angular projection onto
$\mathcal H_{-1,1/2}$. Given
$\psi\in H^{1/2}(\RR^3;\CC^4)\cap\mathcal H_{-1,1/2}$, choose
$\phi_j\in C_c^\infty(\RR^3;\CC^4)$ such that
$\phi_j\to\psi$ in $H^{1/2}$. The boundedness of $\PiPW$ on $H^{1/2}$ gives
\[
  \PiPW\phi_j\longrightarrow\PiPW\psi=\psi
  \qquad\text{in }H^{1/2}.
\]
Each $\PiPW\phi_j$ has the prescribed angular profile, with smooth radial
coefficients and compact radial support, although its support may meet the
origin. Choose $\eta\in C^\infty([0,\infty);[0,1])$ such that $\eta=0$ on
$[0,1]$ and $\eta=1$ on $[2,\infty)$, and set
$\chi_\varepsilon(x)=\eta(|x|/\varepsilon)$. For fixed $j$, writing
$g_j=\PiPW\phi_j$, smoothness and compact support give, for
$0<\varepsilon\le1$,
\[
 \|(1-\chi_\varepsilon)g_j\|_{L^2}\le C_j\varepsilon^{3/2},
 \qquad
 \|(1-\chi_\varepsilon)g_j\|_{H^1}\le C_j\varepsilon^{1/2}.
\]
Interpolation between $L^2$ and $H^1$ therefore yields
\[
 \|(1-\chi_\varepsilon)g_j\|_{H^{1/2}}
 \le C_j\varepsilon.
\]
Consequently,
\[
  \chi_\varepsilon\PiPW\phi_j\longrightarrow\PiPW\phi_j
  \qquad\text{in }H^{1/2}
\]
as $\varepsilon\downarrow0$. Multiplication by a radial scalar function
preserves the channel. A diagonal choice of $j$ and $\varepsilon$ therefore
produces spinors of the form~\eqref{eq:Xpw_definition} converging to $\psi$.
Finally, the space is infinite-dimensional because the coefficient pairs
$(u,v)\in C_c^\infty((0,\infty);\CC)^2$ produce an infinite-dimensional
family of linearly independent spinors.

For a radial scalar potential, the Dirac operator does not mix angular channels. More precisely, the standard partial-wave decomposition of the three-dimensional Dirac operator~\cite[Section~4.6]{Thaller1992} gives
\[
  \Dmt=\bigoplus_{(\kappa,\mu)}\widetilde{\mathcal D}_{m,\kappa,\mu},
  \qquad
  \dom\bigl(\widetilde{\mathcal D}_{m,\kappa,\mu}\bigr)=\dom \bigl(\Dmt\bigr)\cap\mathcal H_{\kappa,\mu},
\]
where each $\widetilde{\mathcal D}_{m,\kappa,\mu}$ is the associated radial Dirac system on the coefficient pair $(u,v)$. Thus every $\mathcal H_{\kappa,\mu}$ is a reducing subspace for $\Dmt$. By the spectral theorem, every bounded Borel function of $\Dmt$ preserves each reducing subspace. Applying this to the spectral projections $P_m=\chi_{(0,\infty)}(\Dmt)$ and $Q_m=\chi_{(-\infty,0)}(\Dmt)$, and using their boundedness on $H^{1/2}$ from~\cref{prop:spectral_split}, gives the desired inclusions:
\[
  P_mX_{\rm pw}\subset X_{\rm pw},\qquad Q_mX_{\rm pw}\subset X_{\rm pw}.
\]
It remains to check that the two projected spaces are infinite-dimensional. On the reduced channel $\mathcal H_{-1,1/2}$, the perturbation $V_{\rm rad}(r)I_4$ is relatively compact with respect to the free radial Dirac operator. Indeed, a sequence bounded in the free radial Dirac graph norm is bounded locally in $H^1$; on a fixed ball, multiplication by $V_{\rm rad}$ is compact from $H^1$ to $L^2$ by Rellich's theorem, while on the complement of a large ball the $L^\infty$-norm of $V_{\rm rad}$ is arbitrarily small by the assumed essential tail decay. By Weyl's essential spectrum theorem~\cite[Chapter IV, Section 5]{Kato1995},
\[
  \sigma_{\mathrm{ess}} \bigl(\Dmt|_{\mathcal H_{-1,1/2}} \bigr)
  =(-\infty,-m]\cup[m,+\infty).
\]
Both positive and negative essential spectral branches are non-empty and unbounded. Since $0$ lies in a spectral gap by assumption, the projections of $\Dmt|_{\mathcal H_{-1,1/2}}$ onto $(0,\infty)$ and $(-\infty,0)$ have infinite rank. Moreover, this infinite-dimensionality is already visible on the form domain: choosing vectors with spectral support in bounded intervals contained in $(m,\infty)$ and in $(-\infty,-m)$ gives infinitely many linearly independent vectors in
$\dom(|\Dmt|^{1/2})\cap\mathcal H_{-1,1/2}=X_{\rm pw}$. This proves~\textnormal{(X1)}.

Finally, the fixed angular profile gives a finite-angular-mode version of the radial compactness theorem. Indeed, using~\eqref{eq:lowest_channel_constant_density}, we have
\[
  |\psi(r\omega)|^2=\frac{|u(r)|^2+|v(r)|^2}{4\pi},
\]
and the $L^q$-norm is equivalent to the corresponding profile norm. The
$H^{1/2}$-equivalence between the spinor norm and the product
Bessel-potential norm of the two radial profiles is proved in
\cref{lem:finite_angular_width_transfer}. The compact embedding theorem for radial and finite-angular-mode Sobolev spaces (see, for instance,~\cite{SickelSkrzypczakVybiral2012}), applied componentwise to the two radial coefficients, yields compactness of
\[
  X_{\rm pw}\hookrightarrow L^q(\RR^3;\CC^4),\qquad 2<q<3.
\]
\end{proof}

\begin{lemma}\label{lem:finite_angular_width_transfer}
Identify a scalar radial function on $\RR^3$ with its radial profile. Let
\[
  \mathcal R^{1/2}_2=RH^{1/2}_2(\RR^3)\oplus RH^{1/2}_2(\RR^3)
\]
and define
\[
  \mathscr U(u,v)(r\omega)=
  \begin{pmatrix}
    u(r) \, \Omega_{-1,1/2}(\omega)\\
    \imath v(r) \, \Omega_{1,1/2}(\omega)
  \end{pmatrix}.
\]
Then $\mathscr U$ is an isomorphism from $\mathcal R^{1/2}_2$, endowed
with its product Bessel-potential norm, onto $X_{\rm pw}$, endowed with the
$H^{1/2}$-norm. Moreover, for every $2<q<3$, one has the norm equivalence
\[
  \|\mathscr U(u,v)\|_{L^q(\RR^3;\CC^4)}
  \asymp
  \left(\|u\|_{L^q((0,\infty),r^2\,\mathrm dr)}^q
       +\|v\|_{L^q((0,\infty),r^2\,\mathrm dr)}^q\right)^{1/q}.
\]
Consequently, for $2<p<3$, if the scalar radial embedding
\[
  T:RH^{1/2}_2(\RR^3)\hookrightarrow RL^p(\RR^3)
\]
has approximation numbers bounded by $Cj^{-\alpha}$, then
\[
  X_{\rm pw}\hookrightarrow L^p(\RR^3;\CC^4)
\]
has approximation numbers bounded by $C'j^{-\alpha}$. The same bound
holds after restricting the domain to a closed Hilbert subspace of
$X_{\rm pw}$ (e.g., to $X^+$ or $X_B^+$) whose norm is equivalent to the $H^{1/2}$-norm.
\end{lemma}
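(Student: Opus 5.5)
The plan is to prove the $H^{1/2}$ isomorphism by interpolation between $L^2$ and $H^1$, get the $L^q$ equivalence directly from the constant angular density, and then transfer the approximation numbers through the bounded maps $\mathscr U$ and $\mathscr U^{-1}$.

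\emph{The $L^2$ level.} Write $\Phi_u(x)=u(|x|)\binom{1}{0}$ for the radial scalar profile. By the normalization \eqref{eq:lowest_channel_constant_density} and polar coordinates,
\[
\|\mathscr U(u,v)\|_{L^2}^2=\tfrac1{4\pi}\cdot 4\pi\int_0^\infty(|u|^2+|v|^2)r^2\,\mathrm dr .
\]
So $\mathscr U$ is an isometry, up to the factor fixed by identifying a radial function with its profile, from $RL^2\oplus RL^2$ onto $\mathcal H_{\rm pw}$.

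\emph{The $H^1$ level.} The upper component is $(4\pi)^{-1/2}u(|x|)$ times a constant vector, so its $H^1$ norm is exactly the radial $H^1$ norm of $u$. For the lower component, note that $\Omega_{1,1/2}=-(\pauli\cdot\omega)\Omega_{-1,1/2}$. Hence the lower component equals $-\imath(4\pi)^{-1/2}\,v(r)\,\pauli\cdot\omega\,\binom10$, which is $-\imath(4\pi)^{-1/2}\,(x/r)\,v(r)$ written as a vector field in $\CC^3$ and then contracted with $\pauli$.

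For a radial profile $v$, the vector field $w(x)=v(r)x/r$ satisfies
\[
\|\nabla w\|_{L^2}^2=\int\bigl(|v'|^2+2|v|^2/r^2\bigr)\,\mathrm dx .
\]
By Hardy's inequality, $\int|v|^2r^{-2}\,\mathrm dx\le 4\int|v'|^2\,\mathrm dx$, so this is comparable to $\|v\|_{H^1(\RR^3)}^2$ as a radial function. The reverse bound is trivial. This density computation is valid for $v\in C_c^\infty((0,\infty))$, and closure extends it to all of $RH^1$. Hence $\mathscr U$ is an isomorphism $RH^1\oplus RH^1\to H^1\cap\mathcal H_{\rm pw}$.

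\emph{Interpolation.} Both scales are complemented: radial functions are the image of the spherical-average projection, and $H^s\cap\mathcal H_{\rm pw}$ is the image of $\PiPW$, which is bounded on every $H^s$ as shown in \cref{prop:Xpw_X1_X2}. By the retraction theorem, complex interpolation commutes with these subspaces. Interpolating $\mathscr U$ and $\mathscr U^{-1}$ at $\theta=1/2$ therefore gives the isomorphism $\mathcal R_2^{1/2}\to X_{\rm pw}$, using the identity $X_{\rm pw}=\mathcal H_{-1,1/2}\cap H^{1/2}$ from \cref{prop:Xpw_X1_X2}.

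I expect this interpolation step to be the main obstacle. The lower component is not a scalar radial function, so one has to check carefully that the $H^1$ bound survives at the origin; Hardy's inequality is what handles this. One also has to confirm that the interpolation spaces of the complemented subspaces are the intersections. If the retraction argument is awkward, the fallback is to represent the lower component as a Riesz-transform-type image of a radial function, $x_jv/r=\partial_j\Delta^{-1}$ of a radial expression, and use boundedness of the Riesz transforms on $H^s$.

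\emph{The $L^q$ equivalence.} This is immediate from the pointwise identity $|\psi|^2=(|u|^2+|v|^2)/(4\pi)$ together with $(a^2+b^2)^{q/2}\asymp a^q+b^q$ and polar coordinates.

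\emph{Approximation numbers.} Factor the embedding as $X_{\rm pw}\xrightarrow{\mathscr U^{-1}}\mathcal R_2^{1/2}\xrightarrow{T\oplus T}RL^p\oplus RL^p\xrightarrow{\ \iota\ }L^p$, where $\iota$ is $\mathscr U$ regarded as a map into $L^p$; it is bounded by the $L^q$ equivalence with $q=p$. Approximation numbers satisfy $a_j(ABC)\le\|A\|\,a_j(B)\,\|C\|$. For a direct sum, $a_{2j-1}(T\oplus T)\le a_j(T)$, because a rank-$(j-1)$ approximant on each summand gives one of rank $2(j-1)$. This yields the bound $C'j^{-\alpha}$ with $C'=C\,2^{\alpha}\|\mathscr U^{-1}\|\,\|\iota\|$.

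Restricting to a closed subspace $Y\subset X_{\rm pw}$ whose norm is equivalent to the $H^{1/2}$ norm amounts to precomposing with the bounded inclusion $Y\hookrightarrow X_{\rm pw}$. This changes only the constant, which is enough for $X^+$ or $X_B^+$.
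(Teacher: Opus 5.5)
Your proposal is correct and follows essentially the same route as the paper. The shared steps are: $L^2$ and $H^1$ endpoint identities with Hardy's inequality controlling the $\ell=1$ lower component, complex interpolation of complemented couples, the constant angular density for the $L^q$ equivalence, and the ideal property plus a rank count for $T\oplus T$ to transfer approximation numbers.

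The differences are minor. You compute the lower-component gradient through the vector field $v(r)x/r$ contracted with $\pauli$, whereas the paper uses the spherical eigenvalue $\ell_1(\ell_1+1)=2$; both give $\int(|v'|^2+2|v|^2/r^2)\,\mathrm dx$. For the lower bound, take it from $|\nabla F|\ge|\partial_rF|$ and $|(\pauli\cdot\omega)\binom10|=1$ rather than from the estimate on $w$. That estimate does not transfer automatically, because $z\mapsto(\pauli\cdot z)\binom10$ is not injective on $\CC^3$. You also omit the paper's short realification remark, which is needed only when $X^+$ is later treated as a real Hilbert space.
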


\begin{proof}
For $\kappa\in\{-1,1\}$ write
\[
  \mathscr U_\kappa f(r\omega)=f(r) \, \Omega_{\kappa,1/2}(\omega),
\]
and let $Y_\kappa^s\subset H^s(\RR^3;\CC^2)$ be the closed subspace with
this fixed angular profile.  The corresponding orbital angular momenta are
$\ell_{-1}=0$ and $\ell_1=1$.

\smallskip
\noindent\emph{Step 1: the $L^2$ endpoint.}
Since $\Omega_{\kappa,1/2}$ is normalized in $L^2(\Sphere^2;\CC^2)$,
Fubini's theorem gives, first for smooth compactly supported profiles,
\begin{equation}\label{eq:angular_L2_endpoint}
  \|\mathscr U_\kappa f\|_{L^2(\RR^3)}^2
  =\int_0^\infty |f(r)|^2r^2\,\mathrm dr.
\end{equation}
Thus $\mathscr U_\kappa$ and the coefficient-extraction map
\[
  \mathscr V_\kappa F(r)
  =\int_{\Sphere^2}\langle F(r\omega),
      \Omega_{\kappa,1/2}(\omega)\rangle_{\CC^2}\,\mathrm d\omega
\]
are mutually inverse bounded maps between the radial $L^2$ coefficient space
and $Y_\kappa^0$.

\smallskip
\noindent\emph{Step 2: the $H^1$ endpoint.}
The spinor spherical harmonic $\Omega_{\kappa,1/2}$ satisfies
\[
 -\Delta_{\Sphere^2}\Omega_{\kappa,1/2}
 =\ell_\kappa(\ell_\kappa+1)\Omega_{\kappa,1/2}.
\]
The polar-coordinate identity
$|\nabla F|^2=|\partial_rF|^2+r^{-2}|\nabla_{\Sphere^2}F|^2$ therefore yields
\begin{equation}\label{eq:angular_H1_endpoint}
  \|\nabla(\mathscr U_\kappa f)\|_{L^2(\RR^3)}^2
  =\int_0^\infty |f'(r)|^2r^2\,\mathrm dr
   +\ell_\kappa(\ell_\kappa+1)
      \int_0^\infty |f(r)|^2\,\mathrm dr.
\end{equation}
The three-dimensional Hardy inequality, applied to the radial scalar
function $x\mapsto f(|x|)$, is
\begin{equation}\label{eq:radial_Hardy_profile}
  \int_0^\infty |f(r)|^2\,\mathrm dr
  \le 4\int_0^\infty |f'(r)|^2r^2\,\mathrm dr.
\end{equation}
Combining \eqref{eq:angular_L2_endpoint}--\eqref{eq:radial_Hardy_profile}
shows that the $H^1$-norm of $\mathscr U_\kappa f$ is equivalent to the
radial scalar $H^1$-norm of $f$. The coefficient-extraction map is bounded
in the reverse direction by the same identities. By density, these maps
extend to mutually inverse isomorphisms between the radial $H^1$ coefficient
space and $Y_\kappa^1$.

\smallskip
\noindent\emph{Step 3: interpolation.}
The angular projection
\[
  P_\kappa F(r\omega)
  =\Omega_{\kappa,1/2}(\omega)
    \int_{\Sphere^2}\langle F(r\eta),
      \Omega_{\kappa,1/2}(\eta)\rangle_{\CC^2}\,\mathrm d\eta
\]
is bounded on both $L^2$ and $H^1$. Indeed, $P_\kappa$ is an orthogonal
projection on each sphere, commutes with the radial derivative, and the
spherical-harmonic decomposition gives, for almost every $r>0$,
\[
 \|\partial_rP_\kappa F(r,\cdot)\|_{L^2(\Sphere^2)}
 \le \|\partial_rF(r,\cdot)\|_{L^2(\Sphere^2)},
 \qquad
 \|\nabla_{\Sphere^2}P_\kappa F(r,\cdot)\|_{L^2(\Sphere^2)}
 \le \|\nabla_{\Sphere^2}F(r,\cdot)\|_{L^2(\Sphere^2)}.
\]
After integration in the radial variable, the polar-coordinate decomposition
used in \eqref{eq:angular_H1_endpoint} proves the $H^1$ bound. Hence
$Y_\kappa^0$ and $Y_\kappa^1$ form a
compatible complemented couple. Radial averaging is likewise a bounded
projection on both $L^2$ and $H^1$, so the radial source spaces form a
compatible complemented couple as well. Complex interpolation gives
\[
 [Y_\kappa^0,Y_\kappa^1]_{1/2}
 =Y_\kappa^{1/2},
 \qquad
 [RL^2,RH^1_2]_{1/2}=RH^{1/2}_2.
\]
Interpolating the mutually inverse endpoint maps proves that
$\mathscr U_\kappa:RH^{1/2}_2\to Y_\kappa^{1/2}$ is an isomorphism.  Taking
the direct sum of the $\kappa=-1$ and $\kappa=1$ components proves the $H^{1/2}$-norm equivalence for $\mathscr U$ and identifies its range
with $X_{\rm pw}$.

\smallskip
\noindent\emph{Step 4: target norms and approximation numbers.}
In the lowest channel, \eqref{eq:lowest_channel_constant_density} gives
\[
 |\mathscr U(u,v)(r\omega)|^2
 =\frac{|u(r)|^2+|v(r)|^2}{4\pi}.
\]
After integration over $\Sphere^2$, the equivalence of the Euclidean
$\ell^2$- and $\ell^q$-norms on $\CC^2$ yields the displayed $L^q$-norm
equivalence.

Under the coefficient isomorphisms, the partial-wave embedding is the
composition of bounded isomorphisms with $T\oplus T$. Equip the source
direct sum with its $\ell^2$ product norm and the target direct sum with its
$\ell^p$ product norm; replacing these by equivalent product norms only
changes the constants below. If $A_j$ has rank less than $j$ and
$\|T-A_j\|\le 2a_j(T)$, then $A_j\oplus A_j$ has rank less than $2j$.
Consequently, for a constant $C_\oplus>0$ depending only on the chosen
product norms,
\[
  a_{2j}(T\oplus T)\le C_\oplus a_j(T)
  \le C_\oplus Cj^{-\alpha}.
\]
Monotonicity of approximation numbers gives
$a_n(T\oplus T)\le C_1n^{-\alpha}$ for every $n$. The ideal property then
transfers this estimate through the coefficient isomorphisms. Finally,
restriction to a closed subspace cannot increase approximation numbers, and
equivalent Hilbert norms change only the multiplicative constant. These power
bounds are unchanged when the complex spaces are regarded as real spaces: if
$T_{\RR}$ denotes the realification of a complex-linear operator $T$, then
\[
 a^{\RR}_{2j}(T_{\RR})\le a^{\CC}_j(T),
\]
because a complex operator of rank less than $j$ has real rank less than
$2j$. Monotonicity therefore gives the same decay exponent after realification.
\end{proof}

\begin{remark}
The value of the exponent used later is supplied by the radial approximation
number theorem of \citet{DotaSkrzypczak2025};
\cref{lem:finite_angular_width_transfer} only transfers that scalar radial
estimate to the two-component partial-wave channel.
\end{remark}

\begin{lemma}\label{lem:nested_tail_basis}
Let $T:H\to B$ be a compact linear embedding from an infinite-dimensional separable Hilbert space into a Banach space. Assume that its approximation numbers satisfy
\[
  a_j(T)\le Cj^{-\alpha}
\]
for some $\alpha>0$. Then there is an orthonormal basis $(e_j)_{j\ge1}$ of $H$ such that, with
\[
  Z_k=\overline{\operatorname{span}}\bigl\{e_j \ : \ j\ge k+1 \bigr\},
\]
one has, for every $k \ge 1$,
\[
  \sup_{\substack{z\in Z_k\\ \|z\|_H=1}}\|Tz\|_B\le C'k^{-\alpha}.
\]
\end{lemma}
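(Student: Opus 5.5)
The plan is to build the orthonormal basis by Gram--Schmidt along an increasing chain of finite-dimensional subspaces. The chain is chosen so that, at dyadic scales, its orthogonal complements lie inside kernels of near-optimal finite-rank approximants of $T$. The basic observation is this. If $A:H\to B$ has rank less than $n$, then $\ker A$ is closed of codimension less than $n$, so $W\coloneqq(\ker A)^\perp$ has $\dim W<n$. If moreover $\|T-A\|\le 2a_n(T)$, then for every $z\in W^\perp=\ker A$ we get
\[
 \|Tz\|_B=\|(T-A)z\|_B\le 2a_n(T)\|z\|_H .
\]
So it suffices to arrange that each tail $Z_k$ sits inside such a kernel at a scale $n$ comparable to $k$.

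\textbf{Construction.} For each $i\ge1$, I will fix $A_i$ of rank less than $2^i$ with $\|T-A_i\|\le 2a_{2^i}(T)$, and set $W_i=(\ker A_i)^\perp$, so that $\dim W_i<2^i$. Since $H$ is separable, I will also fix a sequence $(d_j)$ with dense span, and define
\[
 G_i\coloneqq W_1+\dots+W_i+\operatorname{span}\{d_1,\dots,d_i\}.
\]
These subspaces are nested, $G_i\subset G_{i+1}$, and satisfy $\dim G_i\le 2^{i+1}+i$. Their union is dense. Because $G_i\supset W_i$, we also have $G_i^\perp\subset W_i^\perp=\ker A_i$.

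Gram--Schmidt applied successively along the chain $G_1\subset G_2\subset\cdots$ produces an orthonormal sequence $(e_j)$. For each $i$ it satisfies $\operatorname{span}\{e_1,\dots,e_{\dim G_i}\}=G_i$. Since $(e_j)$ is orthonormal with dense span, it is an orthonormal basis. If $H$ is infinite-dimensional but the chain stabilizes, I will first enlarge the $G_i$ by the $d_j$, which already forces infinite dimension. Finally, if $\dim G_i\le k$, then $Z_k\subset G_i^\perp$.

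\textbf{Estimate.} Fix $k\ge1$. If $k<\dim G_1$, which can only happen for finitely many $k$ (bounded by $5$), I will use the trivial bound $\|Tz\|_B\le\|T\|\le\|T\|\,5^{\alpha}k^{-\alpha}$. Otherwise let $i$ be the largest index with $\dim G_i\le k$. Maximality gives
\[
 k<\dim G_{i+1}\le 2^{i+2}+i+1\le 8\cdot 2^i .
\]
Hence, for every $z\in Z_k$ with $\|z\|_H=1$,
\[
 \|Tz\|_B\le 2a_{2^i}(T)\le 2C\,2^{-i\alpha}\le 2C\,8^{\alpha}k^{-\alpha}.
\]
Taking $C'$ to be the larger of the two constants gives the claim.

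The only delicate point is nestedness: a separate optimal approximant at each $k$ would give non-nested kernels. This is why I pass to dyadic scales and take cumulative sums. The cumulative sum costs only a factor $2$ in dimension, which is absorbed because $a_j$ decays polynomially and $2^i\asymp k$. The argument uses only real-linear structure, so it applies equally to $H$ regarded as a real Hilbert space, as in the realification remark of \cref{lem:finite_angular_width_transfer}.
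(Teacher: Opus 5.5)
Your proof is correct and follows essentially the paper's route: your kernels $\ker A_i$ are the paper's small-codimension subspaces $L_\ell$ (you prove $c_m(T)\le a_m(T)$ inline), your $G_i^\perp$ plays the role of the nested intersections $M_\ell$, and the basis is built block by block at dyadic scales with the same change of constants. The only real difference is how completeness is obtained. You adjoin a dense sequence $(d_j)$, whereas the paper derives $\bigcap_\ell M_\ell=\{0\}$ from injectivity of the embedding, so your version also covers non-injective compact $T$.
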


\begin{proof}
We use the relation between approximation and Gelfand numbers,
\[
  c_m(T)\coloneqq \inf_{\operatorname{codim}M<m}\|T|_M\|\le a_m(T).
\]
For each dyadic $m_\ell=2^\ell$ choose a closed subspace $L_\ell\subset H$ with $\operatorname{codim}L_\ell<m_\ell$ and
\[
  \|T|_{L_\ell}\|\le 2C m_\ell^{-\alpha}.
\]
Set $M_0=H$ and, for $\ell\ge1$, $M_\ell=\cap_{j=1}^\ell L_j$. Then $(M_\ell)$ is a decreasing sequence of closed subspaces,
\[
  r_\ell\coloneqq\operatorname{codim}M_\ell\le \sum_{j=1}^\ell 2^j<2^{\ell+1},
  \qquad
  \|T|_{M_\ell}\|\le 2C2^{-\alpha\ell}.
\]
Moreover, $\bigcap_{\ell\ge1}M_\ell=\{0\}$. Indeed, if $x$ belongs to the intersection, then
$\|Tx\|_B\le 2C2^{-\alpha\ell}\|x\|_H$ for every $\ell$, hence $Tx=0$, and the embedding property gives $x=0$.

We now construct the basis. Since $M_\ell\subset M_{\ell-1}$ and both subspaces have finite codimension, the orthogonal difference
\[
  W_\ell=M_{\ell-1}\cap M_\ell^\perp
\]
is finite-dimensional, with dimension $r_\ell-r_{\ell-1}$. Choose an orthonormal basis in each $W_\ell$ and list these bases block by block. The closed span of the first $r_\ell$ vectors is $M_\ell^\perp$, and therefore the closed tail after the first $r_\ell$ vectors is $M_\ell$. Since the intersection of the $M_\ell$ is zero, the union of the blocks is a complete orthonormal basis of $H$.

If $2^{\ell+1}\le k<2^{\ell+2}$, then $r_\ell<2^{\ell+1}\le k$, so the $k$-tail is contained in $M_\ell$. Hence, for every vector $z$ in that tail,
\[
  \|Tz\|_B\le 2C2^{-\alpha\ell}\|z\|_H\le C'k^{-\alpha}\|z\|_H.
\]
This proves the claim for $k\ge4$ after changing constants; enlarging the
constant once more covers $k \in \{1,2,3\}$.
\end{proof}

\begin{remark} \label{remark:spectral_gap}
If $\|V\|_{L^\infty}<m$, then $0\notin\sigma(\Dmt)$. Indeed, $\Dm$ is invertible and
\[
\|V\Dm^{-1}\|_{L^2\to L^2}\le \|V\|_{L^\infty}/m<1.
\]
Hence $I+V\Dm^{-1}$ is also invertible on $L^2$, and
\[
  \Dmt=(I+V\Dm^{-1})\Dm,
  \qquad
  \Dmt^{-1}=\Dm^{-1}(I+V\Dm^{-1})^{-1}.
\]
Thus $0\notin\sigma(\Dmt)$. Since the resolvent set is open, this implies
$\operatorname{dist}(0,\sigma(\Dmt))>0$, giving a class of radial potentials
satisfying the gap condition.
\end{remark}

\begin{proposition}[Channel criterion] \label{prop:Xpw_X3_criterion}
Let $V$ be as in~\cref{prop:Xpw_X1_X2}. Let
$\PiPW$ denote the $L^2$-orthogonal projection onto the closed partial-wave
channel $\mathcal H_{-1,1/2}$. Its restriction to
$H^{1/2}(\RR^3;\CC^4)$ has range $X_{\rm pw}$, and it extends continuously
to $H^s(\RR^3;\CC^4)$ for $-1/2\le s\le 1/2$ by duality and interpolation.
Suppose that for every $\psi\in X_{\rm pw}$ the nonlinear term
\[
  h_\psi(x) \coloneqq f_0(x,\psi(x))+g(x,\psi(x))
\]
defines an element of $H^{-1/2}(\RR^3;\CC^4)$ and satisfies
\[
  \PiPW h_\psi=h_\psi \qquad\text{in }H^{-1/2}(\RR^3;\CC^4).
\]
Equivalently, in distributional angular variables,
\[
  h_\psi(r\omega) =
  \begin{pmatrix}
    A_\psi(r)\Omega_{-1,1/2}(\omega)\\[1mm]
    \imath B_\psi(r)\Omega_{1,1/2}(\omega)
  \end{pmatrix}
\]
for suitable radial distributions $A_\psi,B_\psi$. Then assumption~\textnormal{(X3)} holds:
\[
  \nabla_\HH E(X_{\rm pw})\subset X_{\rm pw}.
\]
\end{proposition}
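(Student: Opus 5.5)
The plan is to write the gradient of the energy explicitly in terms of the Riesz isomorphism of $\HH$ and then use two facts: the angular projection $\PiPW$ commutes with every bounded Borel function of $\Dmt$, and the nonlinear term already lies in the channel. I assume the energy has the standard form
\[
E(\psi)=\tfrac12\bigl(\|\psi^+\|^2-\|\psi^-\|^2\bigr)-\int_{\RR^3}\bigl(F_0(x,\psi)+G(x,\psi)\bigr)\dx .
\]
Its derivative at $\psi$ in the direction $\varphi$ is
\[
E'(\psi)\varphi=\RE\inner{\Dmt\psi}{\varphi}-\RE\inner{h_\psi}{\varphi},
\]
understood as an $H^{-1/2}$--$H^{1/2}$ pairing. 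The Riesz representative with respect to $(\cdot,\cdot)_\HH$ is therefore
\[
\nabla_\HH E(\psi)=(P_m-Q_m)\psi-|\Dmt|^{-1}h_\psi .
\]
Here $|\Dmt|^{-1}$ is defined by functional calculus and, thanks to the gap $\lambda_1>0$ and assumption (V2), maps $H^{-1/2}$ isomorphically onto $\HH$. To justify this, I will check that $(|\Dmt|^{-1}h,\varphi)_\HH=\RE\inner{h}{\varphi}$. This holds first for $h\in L^2$, and then for general $h\in H^{-1/2}$ by density, using the extension of $|\Dmt|^{-1}$ as the inverse of the form operator.

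\emph{Linear part.} Let $\psi\in X_{\rm pw}$. By (X1), established in \cref{prop:Xpw_X1_X2}, both $P_m\psi$ and $Q_m\psi$ lie in $X_{\rm pw}$, so $(P_m-Q_m)\psi\in X_{\rm pw}$.

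\emph{Nonlinear part.} I must show that $|\Dmt|^{-1}h_\psi\in X_{\rm pw}$. The proof of \cref{prop:Xpw_X1_X2} shows that $\mathcal H_{-1,1/2}$ reduces $\Dmt$, so $\PiPW$ commutes with $f(\Dmt)$ for every bounded Borel function $f$. In particular, on $L^2$,
\[
\PiPW|\Dmt|^{-1}=|\Dmt|^{-1}\PiPW .
\]
I will extend this identity to $H^{-1/2}$ by density of $L^2$ in $H^{-1/2}$, using that both sides are continuous: $\PiPW$ is continuous on $H^{\pm1/2}$ by the statement, and $|\Dmt|^{-1}$ is continuous from $H^{-1/2}$ to $H^{1/2}$. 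The hypothesis $\PiPW h_\psi=h_\psi$ then gives
\[
|\Dmt|^{-1}h_\psi=|\Dmt|^{-1}\PiPW h_\psi=\PiPW|\Dmt|^{-1}h_\psi .
\]
The right-hand side lies in $\HH\cap\mathcal H_{-1,1/2}=X_{\rm pw}$, by the identification in \cref{prop:Xpw_X1_X2}. Combining the two parts gives (X3).

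\emph{Preliminary claims in the statement.}
\begin{itemize}
\item \emph{Range of $\PiPW$ on $H^{1/2}$.} Since $\PiPW$ is bounded on $H^{1/2}$ and $X_{\rm pw}=\mathcal H_{-1,1/2}\cap H^{1/2}$, the range is contained in $X_{\rm pw}$. It is all of $X_{\rm pw}$ because $\PiPW$ fixes every element of $X_{\rm pw}$.
\item \emph{Extension to $H^{-1/2}$.} $\PiPW$ is self-adjoint on $L^2$ and bounded on $H^{1/2}$, so by duality it extends boundedly to $H^{-1/2}$. Interpolation then covers the intermediate values of $s$.
\item \emph{Angular description of $h_\psi$.} Because $\PiPW$ has the explicit rank-one-per-sphere form from Step 3 of \cref{lem:finite_angular_width_transfer}, the condition $\PiPW h_\psi=h_\psi$ is equivalent to $h_\psi$ having the displayed angular structure, with radial distributional coefficients.
\end{itemize}

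\emph{Main obstacle.} The delicate point is the commutation of $\PiPW$ with $|\Dmt|^{-1}$ at the $H^{-1/2}$ level, together with the consistency of the two extensions. The density argument described above handles it, and the two extensions agree because both are the adjoints of maps on $H^{1/2}$ that commute.
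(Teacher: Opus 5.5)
Your proposal is correct and follows the paper's proof essentially step for step: you identify the nonlinear part of $\nabla_\HH E(\psi)$ as the Riesz representative $|\Dmt|^{-1}h_\psi$, and you use that $\mathcal H_{-1,1/2}$ reduces $\Dmt$ to get $\PiPW|\Dmt|^{-1}=|\Dmt|^{-1}\PiPW$ on $L^2$, extended by density to a continuous identity $H^{-1/2}\to H^{1/2}$, with the linear part $\psi^+-\psi^-$ handled through (X1). Your extra justification of the preliminary claims (range of $\PiPW$ on $H^{1/2}$, the duality extension, the angular form) goes slightly beyond what the paper writes but is consistent with it; the only nit is that on $\CC^4$ the projection is rank two per sphere, one rank-one projection for each two-spinor component, not rank one.
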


\begin{proof}
The growth assumptions imply $h_\psi\in L^2+L^{p'}\hookrightarrow H^{-1/2}$
for $\psi\in\HH$; in the lower-order variants this follows instead from the
weighted estimate
\[
  \|b_\tau |\psi|^{\tau-1}\|_{L^{p'}}
  \le \|b_\tau\|_{L^{p/(p-\tau)}}\|\psi\|_{L^p}^{\tau-1},
  \qquad 1<\tau<p.
\]
The nonlinear contribution to the $\HH$-gradient is, up to the irrelevant
minus sign, the Riesz representative $|\Dmt|^{-1}h_\psi\in H^{1/2}$. Since
the angular projection $\PiPW$ reduces $\Dmt$ on $L^2$, it commutes with
every bounded Borel function of $\Dmt$ on $L^2$. In particular it commutes
with $|\Dmt|^{-1}$ on $L^2$. Because $\PiPW$ is bounded on $H^{-1/2}$ and
$H^{1/2}$, and because both sides are continuous maps
$H^{-1/2}\to H^{1/2}$, this commutation identity extends by density to
\[
  \PiPW |\Dmt|^{-1}h_\psi
  =|\Dmt|^{-1}\PiPW h_\psi
  =|\Dmt|^{-1}h_\psi.
\]
Thus the nonlinear gradient contribution belongs to $X_{\rm pw}$. The linear
part $\psi^+-\psi^-$ belongs to $X_{\rm pw}$ by~\cref{prop:Xpw_X1_X2}. Thus
the full gradient belongs to $X_{\rm pw}$.
\end{proof}

\begin{proposition}\label{prop:restricted_full_solution}
Suppose that the linear operator and the nonlinear Nemytskii term preserve the lowest partial-wave channel in the sense of~\cref{prop:Xpw_X1_X2,prop:Xpw_X3_criterion}. If $\psi\in X_{\rm pw}$ is a critical point of the restricted functional $E|_{X_{\rm pw}}$, then $\psi$ is a critical point of $E$ on the full space $H^{1/2}(\RR^3;\CC^4)$.  
\end{proposition}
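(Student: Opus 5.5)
The plan is to show that the full $\HH$-gradient of $E$ at $\psi$ is orthogonal to every direction, by exploiting that this gradient already lies in $X_{\rm pw}$. Let $\psi\in X_{\rm pw}$ be a critical point of $E|_{X_{\rm pw}}$, so that $E'(\psi)\varphi=0$ for every $\varphi\in X_{\rm pw}$. By the Riesz representation on $\HH$ with the inner product $(\cdot,\cdot)_\HH$,
\[
  E'(\psi)\varphi=\bigl(\nabla_\HH E(\psi),\varphi\bigr)_\HH\qquad\text{for all }\varphi\in\HH .
\]
The criterion in \cref{prop:Xpw_X3_criterion} gives $\nabla_\HH E(\psi)\in X_{\rm pw}$. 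That proposition uses only the hypotheses that the linear part $\psi^+-\psi^-$ stays in the channel, which holds by \cref{prop:Xpw_X1_X2}, and that the nonlinearity satisfies $\PiPW h_\psi=h_\psi$.

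Next I test the restricted criticality against the particular direction $\varphi=\nabla_\HH E(\psi)\in X_{\rm pw}$. This gives
\[
  \norm{\nabla_\HH E(\psi)}^2=E'(\psi)\nabla_\HH E(\psi)=0,
\]
so $\nabla_\HH E(\psi)=0$. It follows that $E'(\psi)\varphi=0$ for every $\varphi\in\HH=H^{1/2}(\RR^3;\CC^4)$, which is the claim. The same conclusion can also be reached through an orthogonal splitting: write $\HH=X_{\rm pw}\oplus X_{\rm pw}^{\perp_\HH}$, note that $X_{\rm pw}$ is closed by \cref{prop:Xpw_X1_X2}, and observe that the gradient annihilates the second summand automatically because it lies in the first.

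The only point I expect to need care is the identification of $E'(\psi)$ with $(\nabla_\HH E(\psi),\cdot)_\HH$ on all of $\HH$, rather than only on $X_{\rm pw}$. In particular, the nonlinear part $\varphi\mapsto\RE\int\langle h_\psi,\varphi\rangle\dx$ has to be represented by $|\Dmt|^{-1}h_\psi$ in the $\HH$-inner product for all $\varphi\in\HH$. This follows from $h_\psi\in H^{-1/2}$, from $\HH=\dom(|\Dmt|^{1/2})$ with equivalent norms, and from the duality pairing. The quadratic part is represented by $\psi^+-\psi^-$, since $\RE\inner{\Dmt\psi}{\varphi}=(\psi^+-\psi^-,\varphi)_\HH$ by functional calculus. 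With this identification in hand, the argument above is complete.
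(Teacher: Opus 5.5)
Your proposal is correct and follows the paper's argument: restricted criticality makes $\nabla_\HH E(\psi)$ orthogonal to $X_{\rm pw}$, the channel criterion \textnormal{(X3)} puts it inside $X_{\rm pw}$, and testing against the gradient itself forces it to vanish. Your remarks on the Riesz representative $\psi^+-\psi^--|\Dmt|^{-1}h_\psi$ on all of $\HH$ only make explicit what the paper uses implicitly when it defines $\nabla_\HH E$.
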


\begin{proof}
Let $\nabla_{\HH}E(\psi)$ be the full Hilbert gradient. Criticality on $X_{\rm pw}$ says that this vector is orthogonal to $X_{\rm pw}$. By the channel invariance criterion \textnormal{(X3)}, $\nabla_{\HH}E(\psi)\in X_{\rm pw}$. Hence $\nabla_{\HH}E(\psi)=0$. This is exactly the weak Euler--Lagrange equation on $H^{1/2}(\RR^3;\CC^4)$, concluding the proof.
\end{proof}

\begin{remark}
Equivalently, such a function $\psi \in X_{\rm pw}$ satisfies the Dirac equation weakly against all test spinors, not only against partial-wave variations.
\end{remark}

\begin{remark} \label{rem:exact_power_channel}
For the model $F_0(x,\psi)=b(|x|)|\psi|^p/p$, if
$\psi\in X_{\rm pw}$ has coefficients $(u,v)$ as in
\eqref{eq:Xpw_definition}, then
\[
  f_0(x,\psi)=
  b(r)\left(\frac{|u(r)|^2+|v(r)|^2}{4\pi}\right)^{(p-2)/2}\psi,
  \qquad r=|x|.
\]
Thus the exact-power core preserves the lowest partial-wave channel.
\end{remark}

\begin{example}
\label{ex:Xpw_model_perturbation}
Let $p\in(2,3)$, fix $1<\tau<p/2$, and let $b\in C_b([0,\infty))$ satisfy
$0<b_0\le b(r)\le b_1$. Set
\[
  F_0(x,\psi)=\frac{b(|x|)}p|\psi|^p.
\]
Let $a\in C_c^\infty((0,\infty))$, $a\ge0$, $a\not\equiv0$, and extend it radially. Define
\[
  \eta(\omega)=
  \begin{pmatrix}\Omega_{-1,1/2}(\omega)\\0\end{pmatrix},
  \qquad
  \ell_\eta(\omega,\psi)=\RE\inner{\eta(\omega)}{\psi}_{\CC^4},
\]
and, for a fixed $\zeta>0$ with $\zeta\ne1$, define
\[
  s_+=\max\{s,0\},\qquad s_-=\max\{-s,0\},
\]
and
\[
  K_{\tau,\zeta}(s)=\frac1\tau\bigl((s_+)^\tau+\zeta(s_-)^\tau\bigr).
\]
The function $K_{\tau,\zeta}$ is $C^1$, convex, and not even. For an arbitrary
amplitude $\varepsilon>0$ put
\begin{equation}\label{eq:Xpw_G_model}
  G(x,\psi)=
  \begin{cases}
   \varepsilon a(|x|)
   K_{\tau,\zeta}\!\left(\ell_\eta(x/|x|,\psi)\right),&x\ne0,\\
   0,&x=0.
  \end{cases}
\end{equation}
Because $a$ vanishes on a neighborhood of the origin, this extension is
continuous there and the angular factor is harmless. Moreover,
\[
 |K'_{\tau,\zeta}(s)|\le C|s|^{\tau-1},
 \qquad K'_{\tau,\zeta}(s)s=\tau K_{\tau,\zeta}(s).
\]
Since $a$ is bounded and compactly supported, these identities verify
\textnormal{(G1)}--\textnormal{(G4)} with weights proportional to $a$.
Convexity gives \textnormal{(G5)} with $\mu\equiv0$, independently of
$\varepsilon$.
For $\psi\in X_{\rm pw}$,
\[
 \ell_\eta(\omega,\psi(r\omega))=(4\pi)^{-1}\RE u(r),
\]
so $g=\nabla_\psi^{\RR}G$ has the same channel form as
\eqref{eq:Xpw_definition}, with vanishing lower radial coefficient.
Thus \cref{prop:Xpw_X3_criterion} gives the required channel invariance.
\end{example}

\begin{remark}[Concrete homogeneous partial-wave model] \label{cor:concrete_partial_wave_model}
Let $N=3$, let $V(x)=V_r(|x|)$ with
\[
V_r\in L^\infty(0,\infty;\RR), \qquad
\lim_{R\to\infty}\|V_r\|_{L^\infty((R,\infty))}=0,
\qquad \|V\|_{L^\infty}<m,
\]
and fix $2<p<3$. Let $b\in C_b([0,\infty))$ satisfy $0<b_0\le b(r)\le b_1<\infty$, and set
\[
  F_0(x,\psi)=\frac{b(|x|)}p|\psi|^p.
\]
The gap condition on $V$ gives the spectral gap by~\cref{remark:spectral_gap}, and~\cref{prop:Xpw_X1_X2} verifies \textnormal{(X1)}--\textnormal{(X2)} on $X_{\rm pw}$. The homogeneous nonlinearity above satisfies \textnormal{(F0)}--\textnormal{(F4)} and preserves the partial-wave channel. This is the concrete model used below. \cref{ex:Xpw_model_perturbation} gives a non-even convex lower-order perturbation satisfying the analytic, semiconvexity, and channel-invariance assumptions of the perturbation-from-symmetry theorem.
\end{remark}

% =====================================================================

\section{Assumptions and the energy functional}\label{sec:assumptions}
% =====================================================================

\subsection{Assumptions on the potential and on the working space}

We keep the assumptions on the scalar potential $V : \RR^N \to \RR$ introduced in~\cref{sec:setting}. Specifically:

\begin{enumerate}
\item[\textbf{(V1)}]
  The potential $V$ is real-valued and belongs to $L^\infty(\RR^N)$.

\item[\textbf{(V2)}]
  The form domain of $|\Dmt|^{1/2}$ is $H^{1/2}(\RR^N;\CC^d)$, and its graph norm is equivalent to the standard $H^{1/2}$-norm.

\item[\textbf{(V3)}]
  The origin lies in a gap of the spectrum of the operator
  \[
    \Dmt = \Dm + V(x)I_d.
  \]
  Specifically, we set
  \[
    \lambda_1 \coloneqq \operatorname{dist}\bigl(0,\sigma(\Dmt)\bigr) > 0.
  \]
\end{enumerate}

In addition, we fix a closed subspace $X \subset \HH$ satisfying \textnormal{(X1)}--\textnormal{(X2)} from~\cref{ssec:compactness}. After the energy functional $E$ is defined in~\eqref{eq:energy}, we also require the following compatibility condition:

\begin{enumerate}
\item[\textbf{(X3)}]
  The $\HH$-gradient of the full energy functional $E:\HH\to\RR$ defined in~\eqref{eq:energy} leaves $X$ invariant, namely
  \[
    \nabla_\HH E(X) \subset X.
  \]
\end{enumerate}

We emphasize that \textnormal{(X3)} is an \emph{additional assumption} on the chosen compactness subspace $X$. Finally, for every $q \in (2,2^*_{1/2})$, we denote by $S_q>0$ the continuity constant of the embedding $X \hookrightarrow L^q(\RR^N;\CC^d)$.

\subsection{Assumptions on the symmetric nonlinearity}

Throughout this section we identify $\CC^d$ with $\RR^{2d}$. If $H(x,\cdot) : \CC^d\to\RR$ is of class $C^1$ in the real sense, we denote by $\nabla_\psi^{\RR}H(x,\psi)\in\CC^d$ its real gradient, i.e.
\[
  \mathrm{d}_\psi H(x,\psi)[\varphi] = \RE\inner{\nabla_\psi^{\RR}H(x,\psi)}{\varphi}_{\CC^d} \qquad \text{for all } \varphi \in \CC^d.
\]
We assume that there exists a function $F_0 : \RR^N \times \CC^d \to \RR$ such that:

\begin{enumerate}
\item[\textbf{(F0)}] 
    The function is even in the second variable, i.e.
  \[
    F_0(x,-\psi) = F_0(x,\psi) \qquad \text{for all } (x,\psi) \in \RR^N \times \CC^d.
  \]

\item[\textbf{(F1)}]
  The function $F_0$ is continuous on $\RR^N \times \CC^d$, and for every
  $x \in \RR^N$ the map $F_0(x,\cdot)$ is of class $C^1$ on $\CC^d$ in
  the real sense. Its real gradient
  \[
    f_0(x,\psi) \coloneqq \nabla_\psi^{\RR}F_0(x,\psi)
  \]
  is a Carath\'eodory map. Moreover,
  \[
    F_0(x,0)=0 \qquad \text{for all } x\in\RR^N.
  \]
  Then $f_0(x,-\psi)=-f_0(x,\psi)$ for all $(x,\psi)$.

\item[\textbf{(F2)}]
  There exist a constant $C_0 > 0$ and an exponent
  \[
    2 < p < 2^*_{1/2} \coloneqq \frac{2N}{N-1}
  \]
  such that
  \[
    \abs{f_0(x,\psi)}
    \leq C_0\bigl(\abs{\psi} + \abs{\psi}^{p-1}\bigr)
    \qquad \text{for all } (x,\psi) \in \RR^N \times \CC^d.
  \]

\item[\textbf{(F3)}]
  For every $x \in \RR^N$ and every $\psi \neq 0$,
  \[
    0 < p\,F_0(x,\psi) \leq \RE \inner{f_0(x,\psi)}{\psi}_{\CC^d}.
  \]
  Moreover, there exists $c_0 > 0$ such that
  \[
    F_0(x,\psi) \geq c_0 \abs{\psi}^p \qquad \text{for all } (x,\psi) \in \RR^N \times \CC^d.
  \]

\item[\textbf{(F4)}]
  Uniformly in $x$,
  \[
    f_0(x,\psi) = o(\abs{\psi}) \qquad \text{as } \abs{\psi} \to 0.
  \]
\end{enumerate}

Since $F_0(x,0)=0$ and $f_0=\nabla_\psi^{\RR}F_0$, the fundamental theorem of calculus yields a primitive given by
\begin{equation}\label{eq:F0_radial}
  F_0(x,\psi) = \int_0^1 \RE \inner{f_0(x,t\psi)}{\psi}_{\CC^d}\dt.
\end{equation}
As a consequence of~\textnormal{(F2)} and \textnormal{(F4)}, for every $\varepsilon > 0$ there is $C_\varepsilon > 0$ such that
\begin{equation}\label{eq:f0_eps_growth}
  \abs{f_0(x,\psi)} \leq \varepsilon \abs{\psi} + C_\varepsilon \abs{\psi}^{p-1} \qquad \text{for all } (x,\psi) \in \RR^N \times \CC^d.
\end{equation}
Moreover, by~\eqref{eq:F0_radial} and~\textnormal{(F2)}, there exists $C > 0$ such that
\begin{equation}\label{eq:F0_growth}
  \abs{F_0(x,\psi)} \leq C\bigl(\abs{\psi}^2 + \abs{\psi}^p\bigr) \qquad \text{for all } (x,\psi) \in \RR^N \times \CC^d.
\end{equation}
Combining~\eqref{eq:F0_radial} with~\eqref{eq:f0_eps_growth}, for every $\varepsilon > 0$ there exists $C_\varepsilon > 0$ such that
\begin{equation}\label{eq:F0_eps_growth}
  \abs{F_0(x,\psi)} \leq \varepsilon \abs{\psi}^2 + C_\varepsilon \abs{\psi}^p \qquad \text{for all } (x,\psi) \in \RR^N \times \CC^d.
\end{equation}

\begin{remark}\label{rem:model}
The model potential $F_0(x,\psi) = \tfrac1p \abs{\psi}^p$ satisfies all the required properties \textnormal{(F0)}--\textnormal{(F4)} and its real gradient is given by $f_0(x,\psi) = \abs{\psi}^{p-2}\psi$.
\end{remark}

\subsection{Assumptions on the perturbation}

Assume that there exists a real-valued function $G : \RR^N \times \CC^d \to \RR$ such that:

\begin{enumerate}
\item[\textbf{(G1)}]
  The function $G$ is continuous on $\RR^N \times \CC^d$, and for every
  $x \in \RR^N$ the map $G(x,\cdot)$ is of class $C^1$ on $\CC^d$ in
  the real sense. Its real gradient
  \[
    g(x,\psi) \coloneqq \nabla_\psi^{\RR}G(x,\psi)
  \]
  is a Carath\'eodory map. Moreover,
  \[
    G(x,0)=0 \qquad \text{for all } x\in\RR^N.
  \]

\item[\textbf{(G2)}]
  There exists an exponent $\tau \in (1,2)$ and a non-negative function
  \[
    b_\tau\in L^\infty(\RR^N)\cap L^{p/(p-\tau)}(\RR^N)
  \]
  such that, for all $(x,\psi)\in\RR^N\times\CC^d$,
  \begin{equation}\label{eq:g_mixed_growth}
    |g(x,\psi)| \le b_\tau(x)|\psi|^{\tau-1}.
  \end{equation}

\item[\textbf{(G3)}]
  There exists a non-negative function
  \[
    a_\tau \in L^\infty(\RR^N)\cap L^{p/(p-\tau)}(\RR^N)
  \]
  such that, for all $(x,\psi)$,
  \begin{equation}\label{eq:G_split_bound}
    |G(x,\psi)| \le a_\tau(x)|\psi|^\tau.
  \end{equation}

\item[\textbf{(G4)}]
  There exists a non-negative function
  \[
    c_\tau \in L^\infty(\RR^N)\cap L^{p/(p-\tau)}(\RR^N)
  \]
  such that, for all $(x,\psi)$,
  \begin{equation}\label{eq:G_virial_bound}
    \left| \tfrac12\RE\inner{g(x,\psi)}{\psi}_{\CC^d}-G(x,\psi) \right| \le c_\tau(x)|\psi|^\tau.
  \end{equation}

\item[\textbf{(G5)}]
  There exists a non-negative function $\mu\in L^\infty(\RR^N)$ with
  \begin{equation}\label{eq:G_semiconvex_gap}
    \|\mu\|_{L^\infty}<\lambda_1
  \end{equation}
  such that, for all $x\in\RR^N$ and $z,w\in\CC^d$,
  \begin{equation}\label{eq:G_semiconvex}
    \RE\inner{g(x,z)-g(x,w)}{z-w}_{\CC^d}
    \ge-\mu(x)|z-w|^2.
  \end{equation}
\end{enumerate}

For later use we write
\[
  A_\tau=\|a_\tau\|_{L^{p/(p-\tau)}},\qquad
  B_\tau=\|b_\tau\|_{L^{p/(p-\tau)}},\qquad
  C_\tau=\|c_\tau\|_{L^{p/(p-\tau)}}.
\]
Condition \textnormal{(G5)} is a quantitative semiconvexity assumption. It is automatic with $\mu\equiv0$ when $G(x,\cdot)$ is convex. The main nonsymmetric theorem assumes $1<\tau<p/2$. Under \textnormal{(G1)}--\textnormal{(G2)}, condition \textnormal{(G3)} follows from the formula below with $a_\tau=b_\tau/\tau$; we keep it as a separate hypothesis for later reference. Since $G(x,0)=0$ and $g=\nabla_\psi^{\RR}G$, we also have
\begin{equation}\label{eq:G_radial}
  G(x,\psi) = \int_0^1 \RE \inner{g(x,t\psi)}{\psi}_{\CC^d}\dt.
\end{equation}

\begin{remark}\label{rem:tail_integrability}
The integrability condition on $a_\tau$, and similarly on $b_\tau$ and $c_\tau$, is precisely the weighted H\"older condition associated with the compact $L^p$ embedding:
\[
  \int_{\RR^N} a_\tau|\psi|^\tau\dx
  \le A_\tau\norm{\psi}_{L^p}^\tau.
\]
For the derivative of the perturbation, the exponents are likewise exactly
compatible:
\[
\int_{\RR^N} b_\tau|\psi|^{\tau-1}|\varphi|\,dx
\le
\|b_\tau\|_{L^{p/(p-\tau)}}
\|\psi\|_{L^p}^{\tau-1}
\|\varphi\|_{L^p},
\qquad
\tfrac{p-\tau}{p}+\tfrac{\tau-1}{p}+\tfrac1p=1.
\]
\end{remark}

\begin{example}\label{ex:model}
Take $N=3$, $m>0$, $V\equiv0$, $1<\tau<2<p<3$, and
\[
 F_0(\psi)=\tfrac1p|\psi|^p.
\]
Let $L:\CC^4\to\RR$ be a non-zero real linear functional, let
$a\in C_c^\infty(\RR^3)$ be non-negative and non-zero, and choose $\eta>0$, $\eta\ne1$.
Set $s_\pm = \max\{ \pm s, 0 \}$ and define the functions
\[
 K_{\tau,\eta}(s)=\tfrac1\tau\bigl((s_+)^\tau+\eta(s_-)^\tau\bigr),
 \qquad
 G(x,\psi)=a(x)K_{\tau,\eta}(L\psi).
\]
Since 
\[
|K'_{\tau,\eta}(s)|\le C|s|^{\tau-1}, \qquad  K'_{\tau,\eta}(s)s=\tau K_{\tau,\eta}(s),
\]
the function $G$ is $C^1$, convex in
$\psi$, not even, and satisfies
\textnormal{(G1)}--\textnormal{(G5)} with $\mu\equiv0$. Multiplying $a$ by
an arbitrary positive constant does not affect the semiconvexity condition,
so no amplitude smallness is required in this convex example. To meet the
quantitative hypothesis of the multiplicity theorem, one additionally chooses
$\tau<p/2$. This example verifies the perturbation assumptions
\textnormal{(G1)}--\textnormal{(G5)}; the separate invariance condition
\textnormal{(X3)} must still be checked for the chosen compactness subspace.
\end{example}

\subsection{The energy functional}

We define the full potential as the sum of the even part and the perturbation:
\[
  F(x,\psi) \coloneqq F_0(x,\psi) + G(x,\psi).
\]
We first record the weighted H\"older estimate used below. Whenever
$1<r<p$ and $a_r\in L^{p/(p-r)}(\RR^N)$,
\begin{equation}\label{eq:holder_weighted}
  \int_{\RR^N} a_r(x)|\psi|^r\dx
  \le \|a_r\|_{L^{p/(p-r)}}\|\psi\|_{L^p}^r.
\end{equation}
For the endpoint $r=p$ we use the same estimate with the convention
$p/(p-r)=\infty$; in other words,
\[
  \int_{\RR^N} a_p(x)|\psi|^p\dx
  \le \|a_p\|_{L^\infty}\|\psi\|_{L^p}^p.
\]
This applies in particular to $r=\tau$ under \textnormal{(G3)}. Since
$\HH \hookrightarrow L^2(\RR^N;\CC^d)\cap L^p(\RR^N;\CC^d)$ continuously,
\eqref{eq:F0_growth}, \textnormal{(G3)}, and~\eqref{eq:holder_weighted}
imply that the functionals below are well-defined on the full space $\HH$.

The derivative term associated with $G$ is controlled similarly. For
$\psi,\varphi\in\HH$ and $p'=p/(p-1)$,
\[
  \int_{\RR^N} b_\tau |\psi|^{\tau-1}|\varphi|\,dx
  \le B_\tau \|\psi\|_{L^p}^{\tau-1}\|\varphi\|_{L^p},
  \qquad
  \frac1{p'}=\frac{p-\tau}{p}+\frac{\tau-1}{p}.
\]
Thus $g(\cdot,\psi)$ defines an element of
$L^{p'}(\RR^N;\CC^d)\subset H^{-1/2}(\RR^N;\CC^d)$. Similarly, by
\textnormal{(F2)}, $f_0(\cdot,\psi)$ belongs to
$L^2(\RR^N;\CC^d)+L^{p'}(\RR^N;\CC^d)\subset H^{-1/2}(\RR^N;\CC^d)$.
We set
\begin{equation}\label{eq:energy}
  E(\psi) \coloneqq \frac12 \norm{\psi^+}^2 - \frac12 \norm{\psi^-}^2 - \int_{\RR^N} F(x,\psi)\dx, \qquad \psi \in \HH,
\end{equation}
and decompose it as the even symmetric part plus the perturbation:
\begin{equation}\label{eq:E_decomp}
  E(\psi) = E_0(\psi) - \Phi(\psi),
\end{equation}
where
\begin{align}
  E_0(\psi) &\coloneqq \frac12 \norm{\psi^+}^2 - \frac12 \norm{\psi^-}^2 - \int_{\RR^N} F_0(x,\psi)\dx, \label{eq:E0}
  \\ \Phi(\psi) &\coloneqq \int_{\RR^N} G(x,\psi)\dx. \label{eq:Phi}
\end{align}
Standard Nemytskii arguments, together with the growth assumptions~\textnormal{(F2)} and~\textnormal{(G2)} and the preceding estimates, give $E,E_0,\Phi \in C^1(\HH,\RR)$ and show that, for all $\psi,\varphi \in \HH$,
the differentials are given by
\begin{align}
  \mathrm{d}E(\psi)[\varphi] &= (\psi^+,\varphi^+)_\HH - (\psi^-,\varphi^-)_\HH - \int_{\RR^N} \RE \inner{f(x,\psi)}{\varphi}_{\CC^d}\dx, \label{eq:dE}
  \\ \mathrm{d}E_0(\psi)[\varphi] & = (\psi^+,\varphi^+)_\HH - (\psi^-,\varphi^-)_\HH - \int_{\RR^N} \RE \inner{f_0(x,\psi)}{\varphi}_{\CC^d}\dx, \label{eq:dE0}
  \\ \mathrm{d}\Phi(\psi)[\varphi] &= \int_{\RR^N} \RE \inner{g(x,\psi)}{\varphi}_{\CC^d}\dx, \label{eq:dPhi}
\end{align}
where
\[
  f(x,\psi) \coloneqq f_0(x,\psi) + g(x,\psi) = \nabla_\psi^{\RR}F(x,\psi).
\]
We denote by $\nabla_\HH E(\psi)$ the $\HH$-gradient of $E$ at $\psi$,
that is, the unique element of $\HH$ such that, for every $\varphi \in \HH$,
\[
  \mathrm{d}E(\psi)[\varphi] = (\nabla_\HH E(\psi),\varphi)_\HH.
\]

\begin{remark}
If one wishes to write an explicit formula for the $\HH$-gradient, the correct Riesz representative of the nonlinear term is $\abs{\Dmt}^{-1}f(\cdot,\psi)$, not $\Dmt^{-1}f(\cdot,\psi)$. Formally,
\[
  \nabla_\HH E(\psi)
  =\psi^+-\psi^- - |\Dmt|^{-1}f(\cdot,\psi),
\]
where $f(\cdot,\psi)$ is viewed as an element of
$H^{-1/2}(\RR^N;\CC^d)$ and $|\Dmt|^{-1}:H^{-1/2}\to H^{1/2}$ is the
bounded inverse induced by the spectral gap. We do not use this explicit
expression in the sequel.
\end{remark}

We now prove a basic local regularity result for weak solutions of~\eqref{eq:main} obtained from the variational problem.

\begin{proposition} \label{prop:basic_local_regularity}
Assume $N=3$ and that the coefficients appearing in~\eqref{eq:main} are locally
bounded. Let $2<p<3$ and let $\psi\in H^{1/2}(\RR^3;\CC^4)$ be any weak solution obtained from the variational problem, either in the original formulation or in the renormalized formulation of~\cref{sec:renormalized_quadratic}. Then
\[
  \psi\in W^{1,r}_{\rm loc}(\RR^3;\CC^4), \qquad \text{where} \quad r \coloneqq \tfrac{3}{p-1}>\tfrac32.
\]
Hence, the solutions have one weak derivative locally in the natural elliptic integrability class.
\end{proposition}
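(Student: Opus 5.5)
The plan is to rewrite the weak equation as a local Dirac equation with an $L^r_{\rm loc}$ right-hand side, and then use local elliptic regularity for the first-order system $\Dm$. Concretely, $\psi$ is a weak solution, so $\Dm\psi = f(x,\psi) - V\psi$ in $\mathcal D'(\RR^3;\CC^4)$. In the renormalized formulation the even Hermitian quadratic term is instead a locally bounded matrix multiplication $M(x)\psi$, and it is handled in the same way. Write $h \coloneqq f(\cdot,\psi)-V\psi$. By $H^{1/2}(\RR^3)\hookrightarrow L^q$ for $2\le q\le 3$, we have $\psi\in L^2\cap L^3$, and hence $\psi\in L^p_{\rm loc}\cap L^2_{\rm loc}$.

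Next I bound $h$ in $L^r_{\rm loc}$ with $r=3/(p-1)$. By (F2), $|f_0(x,\psi)|\le C_0(|\psi|+|\psi|^{p-1})$. Since $\psi\in L^3$ and $p-1<2$, we get $|\psi|^{p-1}\in L^{3/(p-1)}$, and $|\psi|\in L^3_{\rm loc}\subset L^r_{\rm loc}$ because $r<3$. For the perturbation, (G2) gives $|g|\le b_\tau|\psi|^{\tau-1}$ with $b_\tau\in L^\infty$ and $\tau-1<p-1$, so $|\psi|^{\tau-1}\in L^{3/(\tau-1)}_{\rm loc}\subset L^r_{\rm loc}$. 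The potential term satisfies $V\psi\in L^3_{\rm loc}$ by local boundedness of $V$. Hence $h\in L^r_{\rm loc}$ with $r>3/2$ since $p<3$.

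For the regularity step, fix a ball $B$ and a cutoff $\chi\in C_c^\infty$ equal to $1$ on $B$. Then
\[
\Dm(\chi\psi)=\chi h - \imath\sum_j \alpha^j(\partial_j\chi)\psi \eqqcolon k .
\]
Here $k\in L^r(\RR^3)$ with compact support, because $\psi\in L^3_{\rm loc}\subset L^r_{\rm loc}$. Since $\Dm$ is invertible and $\Dm^{-1}=\Dm(-\Delta+m^2)^{-1}$, every derivative $\partial_k\Dm^{-1}$ is a combination of operators $\partial_k\partial_j(-\Delta+m^2)^{-1}$ and $\partial_k(-\Delta+m^2)^{-1}$. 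These are Mikhlin multipliers, so they are bounded on $L^r$ for $1<r<\infty$, and $\Dm^{-1}$ is bounded on $L^r$ as well. This gives $\Dm^{-1}k\in W^{1,r}(\RR^3)$.

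It remains to identify $\chi\psi$ with $\Dm^{-1}k$. Both lie in $H^{1/2}+W^{1,r}\subset\mathcal S'$, and their difference $w$ solves $\Dm w=0$. Applying $\Dm$ again gives $(-\Delta+m^2)w=0$, and the symbol $|\xi|^2+m^2$ never vanishes, so $w=0$. Therefore $\chi\psi\in W^{1,r}$, and $\psi\in W^{1,r}(B)$.

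I expect the main subtlety to be this global uniqueness and tempered-distribution justification, together with keeping track of exponents so that the worst term $|\psi|^{p-1}$ lands exactly in $L^{3/(p-1)}$. The inequality $r>3/2$ is simply $p<3$. No bootstrap is needed to reach this statement, though one could iterate: $W^{1,r}\subset L^{3r/(3-r)}$, which would give further gains.
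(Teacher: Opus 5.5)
Your proposal is correct and follows essentially the same route as the paper: localize with a cutoff, show $\Dm(\chi\psi)\in L^r$ with the same exponent bookkeeping from \textnormal{(F2)}, \textnormal{(G2)} and $H^{1/2}\hookrightarrow L^3$, and apply Mikhlin multiplier theory to the constant-coefficient Dirac operator. The only difference is the final step: the paper proves $\|u\|_{W^{1,r}}\le C(\|\Dm u\|_{L^r}+\|u\|_{L^r})$ for test functions and passes to $\eta\psi$ by mollification, whereas you invert $\Dm$ on $L^r$ and identify $\chi\psi=\Dm^{-1}k$ through uniqueness of tempered solutions of $\Dm w=0$, and both arguments are valid.
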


\begin{proof}
The Sobolev embedding $H^{1/2}(\RR^3)\hookrightarrow L^3(\RR^3)$ gives
$\psi\in L^3_{\rm loc}$. By \textnormal{(F2)},
\[
 |f_0(x,\psi)|\le C\bigl(|\psi|+|\psi|^{p-1}\bigr) \implies f_0(\cdot,\psi)\in L^r_{\rm loc} \quad \text{with } r = \tfrac3{p-1}.
\]
Likewise, \textnormal{(G2)} gives $g(\cdot,\psi)\in L^{3/(\tau-1)}_{\rm loc}$; since
$\tau<p$, this space embeds into $L^r$ on bounded sets. In the renormalized formulation, the additional admissible bounded matrix
coefficient gives only a linear term in $\psi$. The bounded linear
coefficients, including $V$ and the admissible $B$, send
$\psi$ into $L^3_{\rm loc}\subset L^r_{\rm loc}$. Hence, after moving the
bounded linear terms to the left-hand side, the right-hand side of the
corresponding Dirac equation belongs to $L^r_{\rm loc}$. Let $\eta\in C_c^\infty(\RR^3)$ and set $w\coloneqq\eta\psi$. In the sense of distributions,
\[
  \Dm w=\eta\Dm\psi-\imath(\alpha\cdot\nabla\eta)\psi,
\]
where the second term belongs to $L^3\subset L^r$ on the support of $\eta$; hence $\Dm w\in L^r$, while $w\in L^r$ with compact support. The estimate for the constant-coefficient first-order elliptic operator
$\Dm=-\imath\alpha\cdot\nabla+m\beta$,
\[
\|u\|_{W^{1,r}} \le C\bigl(\|\Dm u\|_{L^r}+\|u\|_{L^r}\bigr),
\]
holds first for $u\in C_c^\infty(\RR^3;\CC^4)$ and follows from the
H\"ormander--Mikhlin multiplier theorem; see, e.g.,
\cite{TaylorPDE1}. We now extend it to $w$ by regularization. Indeed, for the mollifications $w_\delta=w*\rho_\delta$ one has $\Dm w_\delta=(\Dm w)*\rho_\delta$ because $\Dm$ has constant coefficients, so $w_\delta\to w$ and $\Dm w_\delta\to\Dm w$ in $L^r$. Applying the estimate to $w_\delta-w_{\delta'}$ shows that $(w_\delta)$ is a Cauchy sequence in $W^{1,r}$; its limit is $w$, so $\eta\psi\in W^{1,r}$ and the estimate holds for $\eta\psi$.
\end{proof}

% =====================================================================
\section{The symmetric functional}\label{sec:symmetric}
% =====================================================================

In this section we study the even functional $E_0$ on the invariant compactness space $X$ introduced in~\cref{sec:setting}, and prove that it possesses infinitely many critical values diverging to $+\infty$.

\subsection{The fountain geometry}\label{ssec:fountain}

Let $\{e_k^+\}_{k \geq 1}$ be an orthonormal basis of $X^+$. Define the corresponding finite-dimensional and tail subspaces
\[
  E_n^+ \coloneqq \mathrm{span} \bigl\{e_1^+, \ldots, e_n^+ \bigr\},
  \qquad
  Z_n \coloneqq \bigl(E_n^+\bigr)^\perp \cap X^+.
\]
Since $X = X^+ \oplus X^-$ by~\cref{prop:compact}, we have the orthogonal decomposition
\[
  X = E_n^+ \oplus Z_n \oplus X^-.
\]

\begin{lemma}[Vanishing of $L^p$-norms on tail subspaces]
\label{lem:vanishing}
Define
\[
  \beta_n \coloneqq \sup \left\{ \norm{\psi}_{L^p(\RR^N)} \: : \: \psi \in Z_n,\ \norm{\psi}=1 \right\}.
\]
Then $\beta_n \to 0$ as $n \to \infty$.
\end{lemma}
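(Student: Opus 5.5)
The proof is the standard compactness argument for fountain theorems, relying only on (X2) and the nested structure of the tails $Z_n$. First, $Z_{n+1}\subset Z_n$, so $(\beta_n)$ is nonincreasing. It is also nonnegative and finite, because the embedding $X\hookrightarrow L^p$ is continuous with constant $S_p$. Hence $\beta_n\to\beta\ge0$, and it remains to show that $\beta=0$. Here $p\in(2,2^*_{1/2})$ by (F2), so (X2) applies with $q=p$.

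Suppose instead that $\beta>0$. For each $n$ we pick $\psi_n\in Z_n$ with $\norm{\psi_n}=1$ and $\norm{\psi_n}_{L^p}\ge\beta/2$. Since $(\psi_n)$ is bounded in the Hilbert space $X^+$, we may pass to a subsequence with $\psi_n\rightharpoonup\psi$ weakly in $X^+$. We then identify the limit. For each fixed $k$, we have $\psi_n\in Z_n\subset (E_k^+)^\perp$ as soon as $n\ge k$, so $(\psi_n,e_k^+)_\HH=0$ for $n\ge k$. Passing to the weak limit gives $(\psi,e_k^+)_\HH=0$ for every $k$. Because $\{e_k^+\}$ is an orthonormal basis of $X^+$ (which is closed by \cref{prop:compact}), it follows that $\psi=0$.

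Finally, we use compactness. By (X2) with $q=p$, the embedding $X\hookrightarrow L^p(\RR^N;\CC^d)$ is compact, so it sends weakly convergent sequences to strongly convergent ones. Therefore $\psi_n\to0$ in $L^p$, which contradicts $\norm{\psi_n}_{L^p}\ge\beta/2>0$. Hence $\beta=0$.

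No step is really an obstacle. The only point that needs care is that weak convergence in $X^+$ implies weak convergence in $X$ (the inclusion is an isometric linear map), so that the compact embedding on $X$ can be applied to the sequence. A quantitative rate is not needed here; it is supplied separately by \cref{lem:nested_tail_basis} once the basis is chosen accordingly.
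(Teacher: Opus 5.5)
Your proof is correct and follows essentially the same route as the paper: a contradiction sequence $\psi_n\in Z_n$ with $\norm{\psi_n}=1$, a weak limit that vanishes because it is orthogonal to every $e_k^+$, and strong $L^p$ convergence to zero from \textnormal{(X2)}. Your explicit use of the monotonicity of $(\beta_n)$ justifies choosing one $\psi_n$ in every $Z_n$, a point the paper leaves implicit.
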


\begin{proof}
Since the embedding $X \hookrightarrow L^p(\RR^N;\CC^d)$ is continuous, the numbers $\beta_n$ are necessarily finite. Suppose, for contradiction, that there exist $\delta > 0$ and a sequence of functions $\psi_n \in Z_n$ such that, for all $n$,
\[
  \norm{\psi_n} = 1, \qquad \norm{\psi_n}_{L^p} \geq \delta.
\]
Since $(\psi_n)$ is bounded in $X$, up to a subsequence and relabeling, we may assume that $\psi_n \rightharpoonup \psi$ weakly in $X$. We claim that $\psi = 0$. Indeed, fix $k \geq 1$. For every $n \geq k$, since $\psi_n \in Z_n \subset Z_k$, $(\psi_n,e_k^+)_\HH = 0$. Since $X^+$ is a closed linear subspace of $X$, it is weakly closed; hence
the weak limit $\psi$ belongs to $X^+$. Passing to the limit gives
$(\psi,e_k^+)_\HH = 0$ for every $k$, hence 
\[
\psi \in X^+ \cap (X^+)^\perp = \{0\} \implies \psi_n \rightharpoonup 0 \quad \text{in } X.
\]
By the compact embedding $X \hookrightarrow L^p(\RR^N;\CC^d)$, the convergence is upgraded to
\[
  \psi_n \to 0 \qquad \text{strongly in } L^p(\RR^N;\CC^d),
\]
contradicting $\norm{\psi_n}_{L^p} \geq \delta$. Thus $\beta_n \to 0$.
\end{proof}

\begin{proposition}\label{prop:bn}
There are sequences $\rho_n > 0$ and $b_n \to +\infty$ such that
\[
  E_0(\psi) \geq b_n \qquad \text{for all } \psi \in Z_n \text{ with } \norm{\psi}=\rho_n.
\]
\end{proposition}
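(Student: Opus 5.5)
This is the standard fountain-theorem lower bound on the tail spaces. We combine the growth estimate \eqref{eq:F0_growth} with the decay of $\beta_n$ from \cref{lem:vanishing}.

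First, for $\psi\in Z_n\subset X^+$ we have $\psi^-=0$ and $\psi^+=\psi$. Hence
\[
  E_0(\psi)=\tfrac12\norm{\psi}^2-\int_{\RR^N}F_0(x,\psi)\dx .
\]
By \eqref{eq:F0_growth} this gives
\[
  E_0(\psi)\ge \tfrac12\norm{\psi}^2 - C\norm{\psi}_{L^2}^2 - C\norm{\psi}_{L^p}^p .
\]
The $L^2$ term is the first difficulty, since it is quadratic, like the main term. I would control it through the spectral gap. Since $Z_n\subset E^+$, spectral calculus gives $\norm{\psi}^2=\langle |\Dmt|\psi,\psi\rangle\ge\lambda_1\norm{\psi}_{L^2}^2$. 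That alone only helps if $C<\lambda_1/2$.

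It is cleaner to use the $\varepsilon$-version \eqref{eq:F0_eps_growth} with $\varepsilon=\lambda_1/4$. Then
\[
  E_0(\psi)\ge \tfrac14\norm{\psi}^2 - C_\varepsilon \norm{\psi}_{L^p}^p
  \ge \tfrac14\norm{\psi}^2 - C_\varepsilon\beta_n^p\norm{\psi}^p ,
\]
using the definition of $\beta_n$ and homogeneity on $Z_n$. This step is the only place where some care is needed, namely in choosing $\varepsilon$ relative to $\lambda_1$ so that the quadratic term survives. Everything else is routine.

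Next I optimize the radius. Set
\[
  \rho_n\coloneqq \bigl(p\,C_\varepsilon\beta_n^p\bigr)^{-1/(p-2)} .
\]
This is the maximizer of $\rho\mapsto\frac14\rho^2-C_\varepsilon\beta_n^p\rho^p$ up to the harmless factor in the normalization; more simply, any choice with $C_\varepsilon\beta_n^p\rho_n^{p-2}=\tfrac18$ works. On $\norm{\psi}=\rho_n$ we then get
\[
  E_0(\psi)\ge \tfrac18\rho_n^2\eqqcolon b_n .
\]
If $\beta_n=0$ for some $n$, any $\rho_n$ works and we can take $\rho_n$ large.

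Finally, \cref{lem:vanishing} gives $\beta_n\to0$. Since $p>2$, this forces $\rho_n\to\infty$, and therefore $b_n=\rho_n^2/8\to+\infty$.
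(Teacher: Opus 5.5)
Your argument is correct and is essentially the paper's proof: you take $\varepsilon=\lambda_1/4$ in \eqref{eq:F0_eps_growth}, absorb the $L^2$ term with $\norm{\psi}_{L^2}^2\le\lambda_1^{-1}\norm{\psi}^2$, bound the $L^p$ term by $\beta_n^p\rho^p$, and pick $\rho_n$ with $\rho_n^{p-2}\sim\beta_n^{-p}$. The paper normalizes $C_\varepsilon\beta_n^p\rho_n^{p-2}=\tfrac1{2p}$ and excludes $\beta_n=0$ via injectivity of the embedding rather than treating it separately. One slip to fix: your first displayed choice $\rho_n=(pC_\varepsilon\beta_n^p)^{-1/(p-2)}$ is not the maximizer (that is $(2pC_\varepsilon\beta_n^p)^{-1/(p-2)}$), and it would give the bound $(\tfrac14-\tfrac1p)\rho_n^2<0$ since $p<4$; drop it and keep the normalization $C_\varepsilon\beta_n^p\rho_n^{p-2}=\tfrac18$, which is what your conclusion $b_n=\rho_n^2/8$ actually uses.
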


\begin{proof}
Let $\psi \in Z_n$ with $\norm{\psi} = \rho$. Since $Z_n \subset X^+$, we have $\psi^- = 0$, hence the even energy is given by
\[
  E_0(\psi) = \tfrac12 \rho^2 - \int_{\RR^N} F_0(x,\psi)\dx.
\]
By~\eqref{eq:F0_eps_growth}, for every $\varepsilon > 0$ there exists $C_\varepsilon > 0$ such that
\[
  \abs{F_0(x,\psi)} \leq \varepsilon \abs{\psi}^2 + C_\varepsilon \abs{\psi}^p.
\]
Therefore
\[
  E_0(\psi) \geq \tfrac12 \rho^2 - \varepsilon \norm{\psi}_{L^2}^2 - C_\varepsilon \norm{\psi}_{L^p}^p.
\]
Using the spectral gap estimate $\norm{\psi}_{L^2}^2 \leq \lambda_1^{-1}\norm{\psi}^2$ and the definition of $\beta_n$, we obtain
\[
  E_0(\psi) \geq \left(\tfrac12 - \tfrac{\varepsilon}{\lambda_1}\right)\rho^2 - C_\varepsilon \beta_n^p \rho^p.
\]
Choose $\varepsilon = \lambda_1/4$. Then
\[
  E_0(\psi) \ge \tfrac14 \rho^2 - C_\varepsilon \beta_n^p \rho^p.
\]
Since $X^+$ is infinite-dimensional, $Z_n\ne\{0\}$, and the embedding
into $L^p$ is injective; hence $\beta_n>0$. Set
\[
  \rho_n \coloneqq \left(\frac{1}{2p\,C_\varepsilon\,\beta_n^p}\right)^{1/(p-2)}.
\]
Then
\[
  C_\varepsilon\beta_n^p\rho_n^p
  = C_\varepsilon\beta_n^p\rho_n^2\rho_n^{p-2}
  = \tfrac{1}{2p}\rho_n^2.
\]
Consequently, whenever $\norm{\psi}=\rho_n$,
\[
  E_0(\psi)
  \ge \left(\tfrac14-\tfrac{1}{2p}\right)\rho_n^2
  = \tfrac{p-2}{4p}\left(\tfrac{1}{2p\,C_\varepsilon}\right)^{2/(p-2)}
  \beta_n^{-2p/(p-2)}
  \eqqcolon b_n.
\]
Since $\beta_n \to 0$ by~\cref{lem:vanishing}, we conclude that $b_n \to +\infty$.
\end{proof}

\subsection{Upper bound on slices with finite-dimensional positive part}

We now show that $E_0$ is non-positive on the large-radius part of $E_n^+\oplus X^-$.

\begin{proposition} \label{prop:an}
For each $n \geq 1$, there exists a radius $R_n > 0$ such that
\[
  E_0(\psi) \le 0 \qquad \text{for all } \psi \in E_n^+ \oplus X^- \text{ with } \norm{\psi} \geq R_n.
\]
\end{proposition}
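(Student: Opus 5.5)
We argue by contradiction along a sequence, using the lower bound $F_0\ge c_0|\psi|^p$ from \textnormal{(F3)} and the finite dimensionality of $E_n^+$. Suppose there are $\psi_k\in E_n^+\oplus X^-$ with $\norm{\psi_k}\to\infty$ and $E_0(\psi_k)>0$. Normalize $w_k\coloneqq\psi_k/\norm{\psi_k}$, so $w_k=w_k^++w_k^-$ with $w_k^+\in E_n^+$. Since $F_0\ge0$, positivity of $E_0$ gives
\[
  0<\frac{E_0(\psi_k)}{\norm{\psi_k}^2}
  \le \tfrac12\norm{w_k^+}^2-\tfrac12\norm{w_k^-}^2
  -c_0\norm{\psi_k}^{p-2}\norm{w_k}_{L^p}^p .
\]
In particular $\norm{w_k^+}^2>\norm{w_k^-}^2$, so $\norm{w_k^+}^2\ge\tfrac12$. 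Also $\norm{w_k}_{L^p}^p\le \norm{\psi_k}^{2-p}/(2c_0)\to0$.

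Pass to a subsequence with $w_k\rightharpoonup w$ in $X$. Since $E_n^+$ is finite-dimensional, $w_k^+\to w^+$ strongly, so $\norm{w^+}^2\ge\tfrac12$ and $w\neq0$. The compact embedding \textnormal{(X2)} gives $w_k\to w$ in $L^p$. Alternatively, weak lower semicontinuity of the $L^p$-norm under weak $L^p$ convergence (which follows from continuity of the embedding) already suffices. Hence $\norm{w}_{L^p}=0$, so $w=0$, which contradicts $w^+\ne0$. Therefore such a radius $R_n$ exists.

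The only delicate point is excluding $w=0$: the negative component may lose mass weakly, so nontriviality must come from the strong convergence of $w_k^+$ in the finite-dimensional $E_n^+$, combined with $\norm{w_k^+}\ge\norm{w_k^-}$. One should also check that $w^+=P_m w$ really is the limit of $w_k^+$. This holds because $P_m$ is bounded, hence weak-weak continuous, and on $E_n^+$ weak and strong convergence coincide.

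A direct alternative is equivalence of norms on $E_n^+$ together with a Hölder-type bound $\norm{w^+}_{L^p}\lesssim\norm{w}_{L^p}$. That bound is not obviously available, because $P_m$ need not be bounded on $L^p$, so the contradiction route above is preferable.
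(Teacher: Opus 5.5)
Your argument is correct. It shares the paper's skeleton: contradiction, normalization $w_k=\psi_k/\|\psi_k\|$, and strong convergence of $w_k^+$ in the finite-dimensional $E_n^+$. It organizes the contradiction differently, though.

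The paper splits into two cases. If $v^+=0$, the quadratic part alone gives $\limsup E_0(\psi_j)/\|\psi_j\|^2\le-\tfrac12$. If $v^+\neq0$, it uses the compact embedding to obtain a.e.\ convergence, so $|\psi_j(x)|\to\infty$ on a set of positive measure, and Fatou's lemma then forces $\|\psi_j\|^{-2}\int F_0(x,\psi_j)\,\mathrm{d}x\to+\infty$.

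You instead extract both facts from the single inequality $E_0(\psi_k)>0$ together with $F_0\ge c_0|\psi|^p$. It gives $\|w_k^+\|^2>\tfrac12$, which disposes of the paper's first case immediately. It also gives the quantitative decay $\|w_k\|_{L^p}^p<\|\psi_k\|^{2-p}/(2c_0)$, which replaces the Fatou step.

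What this buys is economy: no case distinction and no pointwise convergence. With your weak-lower-semicontinuity variant, only the continuity and injectivity of $X\hookrightarrow L^p$ are used, not the compactness in \textnormal{(X2)}. So this geometric estimate holds on any closed $X$ satisfying \textnormal{(X1)} with a continuous $L^p$ embedding. The paper's reliance on \textnormal{(X2)} costs nothing here, since compactness is needed elsewhere anyway.

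One small wording point: your displayed inequality relies on the lower bound $F_0\ge c_0|\psi|^p$ from \textnormal{(F3)}, not merely on $F_0\ge0$ as the sentence introducing it says.
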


\begin{proof}
We argue by contradiction. Assume that for some fixed $n$ there exists a sequence $\psi_j \in E_n^+ \oplus X^-$ such that
\[
  \norm{\psi_j} \to \infty, \qquad E_0(\psi_j) > 0.
\]
Decompose $\psi_j$ as $\psi_j = \psi_j^+ + \psi_j^-$ with $\psi_j^+ \in E_n^+$ and $\psi_j^- \in X^-$, and consider the unit-norm sequence
\[
  v_j \coloneqq \tfrac{\psi_j}{\norm{\psi_j}} = v_j^+ + v_j^-, \qquad \norm{v_j^+}^2 + \norm{v_j^-}^2 = \norm{v_j}^2 = 1.
\]
Since $E_n^+$ is finite-dimensional, up to a subsequence, $v_j^+ \to v^+$ strongly in $E_n^+$. Moreover, up to a further subsequence, $v_j^- \rightharpoonup v^-$ weakly in $X^-$. Hence, the sum also converges:
\[
  v_j \rightharpoonup v \coloneqq v^+ + v^- \qquad \text{weakly in } X.
\]
The compact embedding $X \hookrightarrow L^p(\RR^N;\CC^d)$ upgrades this to $v_j \to v$ strongly in $L^p(\RR^N;\CC^d)$ and, after passing to a subsequence, almost everywhere in $\RR^N$. We now distinguish two cases for the limit.

\smallskip
\textbf{Case 1: $v^+ \neq 0$.} Then $v \neq 0$, because otherwise $0 = v = v^+ + v^-$, which would imply $v^+ = -v^- \in X^+ \cap X^- = \{0\}$, a contradiction.
Therefore the nonzero locus of $v$,
\[
  \Omega \coloneqq \{x \in \RR^N\  : \ v(x) \neq 0\},
\]
has positive measure. For a.e.\ $x \in \Omega$,
\[
  \abs{\psi_j(x)}  = \norm{\psi_j}\abs{v_j(x)} \to +\infty.
\]
Using the global lower bound $F_0(x,\psi) \geq c_0 \abs{\psi}^p$ from~\textnormal{(F3)}, we have
\[
  \tfrac{F_0(x,\psi_j(x))}{\norm{\psi_j}^2} \geq c_0 \norm{\psi_j}^{p-2}\abs{v_j(x)}^p \to +\infty \qquad \text{for a.e.\ } x \in \Omega,
\]
since $p>2$ and $|v_j(x)|\to|v(x)|>0$ a.e.\ on $\Omega$. Since $F_0 \geq 0$, Fatou's lemma yields
\[
  \liminf_{j\to\infty} \tfrac{1}{\norm{\psi_j}^2} \int_{\RR^N} F_0(x,\psi_j)\dx = +\infty.
\]
Dividing the identity
\[
  E_0(\psi_j) = \tfrac12 \norm{\psi_j^+}^2 - \tfrac12 \norm{\psi_j^-}^2 - \int_{\RR^N} F_0(x,\psi_j)\dx
\]
by $\norm{\psi_j}^2$, we obtain
\[
  \tfrac{E_0(\psi_j)}{\norm{\psi_j}^2} = \tfrac12 \norm{v_j^+}^2 - \tfrac12 \norm{v_j^-}^2 - \tfrac{1}{\norm{\psi_j}^2} \int_{\RR^N} F_0(x,\psi_j)\dx \to -\infty,
\]
contradicting $E_0(\psi_j) > 0$.

\smallskip
\textbf{Case 2: $v^+ = 0$.} Then $\norm{v_j^+}\to 0$, hence $\norm{v_j^-}^2 = 1 - \norm{v_j^+}^2 \to 1$. Since $F_0 \geq 0$,
\[
  \tfrac{E_0(\psi_j)}{\norm{\psi_j}^2} \le \tfrac12 \norm{v_j^+}^2 - \tfrac12 \norm{v_j^-}^2  \to -\tfrac12,
\]
again contradicting $E_0(\psi_j) > 0$.
\end{proof}

\subsection{The Palais--Smale condition} 

First, we recall the notion of Palais--Smale condition, and then we prove that the functional satisfies it.

\begin{definition}[Palais--Smale condition]
We say that a functional $E_0$ satisfies the \emph{Palais--Smale (PS) condition on $X$ at the level $c \in \RR$} if any sequence $(u_n) \subset X$ such that
\[
  E_0(u_n) \to c, \qquad \mathrm{d}E_0(u_n) \to 0 \quad \text{in } X^*,
\]
admits a strongly converging subsequence in $X$.
\end{definition}

\begin{proposition}\label{prop:PS}
The functional $E_0$ satisfies the Palais--Smale condition on $X$ at every level $c \in \RR$.
\end{proposition}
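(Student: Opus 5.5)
The plan is the standard two-step argument for strongly indefinite functionals: first show that every Palais--Smale sequence is bounded in $X$, then use the compact embedding \textnormal{(X2)} to extract a strongly convergent subsequence. Let $(u_n)\subset X$ satisfy $E_0(u_n)\to c$ and $\mathrm{d}E_0(u_n)\to0$ in $X^*$.

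\emph{Step 1: an $L^p$ bound in terms of the norm.} I will test the derivative against $u_n$ and use the Ambrosetti--Rabinowitz condition \textnormal{(F3)}. This gives
\[
  E_0(u_n)-\tfrac12\mathrm{d}E_0(u_n)[u_n]
  =\int_{\RR^N}\Bigl(\tfrac12\RE\inner{f_0(x,u_n)}{u_n}-F_0(x,u_n)\Bigr)\dx
  \ge\Bigl(\tfrac p2-1\Bigr)\int F_0(x,u_n)\dx
  \ge \Bigl(\tfrac p2-1\Bigr)c_0\|u_n\|_{L^p}^p .
\]
Hence
\[
  \|u_n\|_{L^p}^p\le C\bigl(1+o(1)\|u_n\|\bigr).
\]
Note that \textnormal{(F3)} also forces $\tfrac12\RE\langle f_0,\psi\rangle-F_0\ge0$.

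\emph{Step 2: boundedness.} Next I test the derivative against $u_n^+-u_n^-\in X$, which is admissible by \textnormal{(X1)}:
\[
  \|u_n\|^2=\mathrm{d}E_0(u_n)[u_n^+-u_n^-]+\int\RE\inner{f_0(x,u_n)}{u_n^+-u_n^-}\dx .
\]
The first term is $o(1)\|u_n\|$. For the integral I use \eqref{eq:f0_eps_growth} together with H\"older's inequality and the spectral-gap bound $\|\cdot\|_{L^2}^2\le\lambda_1^{-1}\|\cdot\|^2$:
\[
  \Bigl|\int\RE\inner{f_0(x,u_n)}{u_n^+-u_n^-}\dx\Bigr|
  \le \varepsilon\lambda_1^{-1}\|u_n\|^2+C_\varepsilon\|u_n\|_{L^p}^{p-1}S_p\|u_n\| .
\]
Choosing $\varepsilon=\lambda_1/2$ absorbs the quadratic piece into the left-hand side. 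Substituting the Step~1 bound then gives
\[
  \|u_n\|^2\lesssim 1+\|u_n\|+\|u_n\|^{1+(p-1)/p}.
\]
Since $1+(p-1)/p<2$, this yields $\sup_n\|u_n\|<\infty$.

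This step is the only delicate one. The quadratic $L^2$ contribution has to be absorbed, which is exactly where the $\varepsilon$-version of the growth bound and the gap constant $\lambda_1$ are used. The exponent comparison $(2p-1)/p<2$ must also be checked, and it holds precisely because $p>2$.

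\emph{Step 3: strong convergence.} Boundedness lets me pass to a subsequence with $u_n\rightharpoonup u$ in $X$. By \textnormal{(X2)}, $u_n\to u$ in $L^p$ and in $L^q$ for some $q\in(2,p)$; I use \eqref{eq:f0_eps_growth} instead of strong $L^2$ convergence. I then write
\[
  \|u_n^+-u^+\|^2=\mathrm{d}E_0(u_n)[u_n^+-u^+]-\mathrm{d}E_0(u)[u_n^+-u^+]
  +\int\RE\inner{f_0(x,u_n)-f_0(x,u)}{u_n^+-u^+}\dx .
\]
The first term is $o(1)$ by boundedness. The second is $o(1)$ by weak convergence. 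For the integral, I split $f_0$ into its small-linear part and its $|\psi|^{p-1}$ part via \eqref{eq:f0_eps_growth}. The linear part is bounded by $C\varepsilon$ uniformly in $n$, since the sequence is bounded in $L^2$. The power part tends to zero, because $f_0(\cdot,u_n)$ is bounded in $L^{p'}$ and $u_n^+-u^+\to0$ in $L^p$; here I use that $P_m$ is continuous on $X$, so $u_n^+\rightharpoonup u^+$, and compactness then gives strong $L^p$ convergence.

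Letting $\varepsilon\to0$ gives $u_n^+\to u^+$. The same computation with $u_n^- - u^-$ gives $u_n^-\to u^-$, so $u_n\to u$ in $X$.
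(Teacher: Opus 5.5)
Your proof is correct and follows essentially the same route as the paper. The Ambrosetti--Rabinowitz identity for $E_0(u_n)-\tfrac12\mathrm{d}E_0(u_n)[u_n]$ gives $\|u_n\|_{L^p}^p\le C(1+o(1)\|u_n\|)$. Absorbing the $L^2$ term through the gap constant gives boundedness; you test once with $u_n^+-u_n^-$, whereas the paper tests separately with $u_n^+$ and $-u_n^-$ and sums. Compactness together with the $\varepsilon$-growth bound then gives strong convergence of each spectral component.

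One small correction: the inequality $(2p-1)/p<2$ holds for every $p>0$, so it does not depend on $p>2$. Where $p>2$ is really needed is in Step~1, to make $\tfrac p2-1$ positive.
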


\begin{proof}
We divide the proof into two steps: boundedness and strong convergence.

\smallskip
\noindent\textbf{Step 1.} We claim that $\norm{u_n}$ is bounded. Assume by contradiction that, up to a subsequence, $s_n \coloneqq \norm{u_n} \to +\infty$. Since $\mathrm dE_0(u_n)\to0$ in $X^*$, we have
$\mathrm dE_0(u_n)[u_n]=o(\|u_n\|)=o(s_n)$. Hence
\[
  E_0(u_n)-\tfrac12\mathrm dE_0(u_n)[u_n]
  =c+o(1)+o(s_n).
\]
On the other hand, for every $\psi\in X$,
\[
  E_0(\psi)-\tfrac12\mathrm dE_0(\psi)[\psi]
  =
  \int_{\RR^N}
  \left(
  \tfrac12\RE\inner{f_0(x,\psi)}{\psi}_{\CC^d}
  -F_0(x,\psi)
  \right)\dx.
\]
By \textnormal{(F3)},
\[
  \tfrac12 \RE \inner{f_0(x,\psi)}{\psi}_{\CC^d} - F_0(x,\psi)
  \ge \left(\tfrac p2 - 1\right) F_0(x,\psi) \ge 0.
\]
Therefore
\[
0\le
\left(\tfrac p2-1\right)\int_{\RR^N}F_0(x,u_n)\dx
\le |c|+o(1)+o(s_n),
\]
and, consequently, the integral is bounded above by
\begin{equation}\label{eq:F0_PS_bound}
  \int_{\RR^N} F_0(x,u_n)\dx \leq C(1+s_n).
\end{equation}
Since the global lower bound in \textnormal{(F3)} gives $F_0(x,\psi) \geq c_0\abs{\psi}^p$ for all $(x,\psi)$, we obtain directly
\begin{equation}\label{eq:Lp_PS_bound}
  \norm{u_n}_{L^p}^p \le \tfrac{1}{c_0}\int_{\RR^N} F_0(x,u_n)\dx \le C(1+s_n).
\end{equation}
Fix $\varepsilon_0 = \lambda_1/4$ and use~\eqref{eq:f0_eps_growth}: testing $\mathrm{d}E_0(u_n)$ against $u_n^+$ yields
\[
  \norm{u_n^+}^2 = \int_{\RR^N} \RE \inner{f_0(x,u_n)}{u_n^+}_{\CC^d}\dx + o(s_n),
\]
which ultimately gives
\[
  \norm{u_n^+}^2 \le \varepsilon_0 \norm{u_n}_{L^2}\norm{u_n^+}_{L^2} + C_{\varepsilon_0}\norm{u_n}_{L^p}^{p-1}\norm{u_n^+}_{L^p} + o(s_n).
\]
Substituting the following estimates into the above
\[
  \norm{u_n}_{L^2} \leq \lambda_1^{-1/2} s_n, \qquad
  \norm{u_n^+}_{L^2} \leq \lambda_1^{-1/2} s_n, \qquad
  \norm{u_n^+}_{L^p} \leq S_p s_n,
\]
we obtain
\[
  \norm{u_n^+}^2 \leq \tfrac14 s_n^2 + C_{\varepsilon_0} S_p \norm{u_n}_{L^p}^{p-1} s_n + o(s_n).
\]
The same estimate holds for $\norm{u_n^-}^2$, testing the energy $\mathrm{d}E_0(u_n)$ against $-u_n^-$. Indeed,
\[
 \mathrm dE_0(u_n)[-u_n^-]
 =\|u_n^-\|^2+\int_{\RR^N}
   \RE\inner{f_0(x,u_n)}{u_n^-}_{\CC^d}\dx=o(s_n),
\]
and the preceding estimates apply to the integral. Summing gives
\[
  s_n^2 \leq \tfrac12 s_n^2 + 2 C_{\varepsilon_0} S_p \norm{u_n}_{L^p}^{p-1} s_n + o(s_n).
\]
By~\eqref{eq:Lp_PS_bound}, we have $\norm{u_n}_{L^p}^{p-1} \leq C(1+s_n)^{(p-1)/p}$; hence,
\[
  \tfrac12 s_n^2 \le C s_n \bigl(1+s_n\bigr)^{(p-1)/p} + o(s_n).
\]
Since
\[
  1 + \tfrac{p-1}{p} = \tfrac{2p-1}{p} = 2 - \tfrac{1}{p} < 2 \qquad \text{for all } p > 1,
\]
the RHS is $o(s_n^2)$ as $s_n \to \infty$, a contradiction. Hence $(u_n)$ is bounded in $X$.

\smallskip
\noindent\textbf{Step 2.}
Since $(u_n)$ is bounded in $X$, up to a subsequence, it converges weakly to some $u$ in $X$, which upgrades to
\[
  u_n \to u \qquad \text{strongly in } L^p(\RR^N;\CC^d)
\]
by the compact embedding $X \hookrightarrow L^p(\RR^N;\CC^d)$. Set $w_n \coloneqq u_n - u$. Then $w_n \rightharpoonup 0$ in $X$. Since $P_m$ and $Q_m$ are bounded on $X$, also the components converge weakly:
\[
  w_n^\pm \rightharpoonup 0 \qquad \text{in } X.
\]
By compactness, both convergences are strong in $L^p(\RR^N;\CC^d)$. We now show $\norm{w_n^+}\to 0$. Since $\mathrm{d}E_0(u_n)\to 0$ in $X^*$ and $(w_n^+)$ is bounded in $X$, $\mathrm{d}E_0(u_n)[w_n^+] = o(1)$. Expanding this identity gives
\[
  o(1) = \norm{w_n^+}^2 + (u^+,w_n^+)_\HH - \int_{\RR^N} \RE \inner{f_0(x,u_n)}{w_n^+}_{\CC^d}\dx.
\]
Since $w_n^+ \rightharpoonup 0$ in $X$, we have $(u^+,w_n^+)_\HH \to 0$. Now fix $\varepsilon > 0$ and use~\eqref{eq:f0_eps_growth}:
\[
  \abs{f_0(x,u_n)} \leq \varepsilon \abs{u_n} + C_\varepsilon \abs{u_n}^{p-1}.
\]
Hence
\[
  \left| \int_{\RR^N} \RE \inner{f_0(x,u_n)}{w_n^+}_{\CC^d}\dx \right| \le \varepsilon \norm{u_n}_{L^2}\norm{w_n^+}_{L^2} + C_\varepsilon \norm{u_n}_{L^p}^{p-1}\norm{w_n^+}_{L^p} \le C \varepsilon \norm{w_n^+} + o(1),
\]
because $(u_n)$ is bounded in $X$, hence in $L^2$ and $L^p$, and $w_n^+ \to 0$ in $L^p$. It follows that
\[
  \norm{w_n^+}^2 \leq C \varepsilon \norm{w_n^+} + o(1).
\]
Since $(w_n^+)$ is bounded in $X$, taking the limsup gives
\[
  \limsup_{n\to\infty}\norm{w_n^+} \le C\varepsilon.
\]
Letting $\varepsilon \downarrow 0$, we obtain
\[
  \norm{w_n^+} \to 0.
\]
The proof that $\norm{w_n^-}\to 0$ is identical, testing $\mathrm{d}E_0(u_n)$ against $-w_n^-$. We conclude that
\[
  \norm{w_n}^2 =  \norm{w_n^+}^2 + \norm{w_n^-}^2 \to 0.
\]
Thus $u_n \to u$ strongly in $X$.
\end{proof}

\subsection{Infinitely many critical values}

We now prove that the restricted even functional has infinitely many distinct
critical points. When its full Hilbert gradient preserves $X$, these critical
points are weak solutions of the unrestricted equation.

\begin{theorem}\label{thm:symmetric}
Under assumptions \textnormal{(V1)}--\textnormal{(V3)},
\textnormal{(X1)}--\textnormal{(X2)}, and
\textnormal{(F0)}--\textnormal{(F4)}, the functional $E_0$ on $X$
possesses a sequence of diverging critical values $c_n^0 \to +\infty$, with
corresponding critical points $\psi_n^0 \in X$. If, in addition,
$\nabla_\HH E_0(X)\subset X$, then every $\psi_n^0$ is a critical point on
$\HH$ and hence a weak solution of the full symmetric equation.
\end{theorem}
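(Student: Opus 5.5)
This follows from a fountain theorem for strongly indefinite even functionals, in the Bartsch--Willem or Bartsch--Ding form for $X=X^+\oplus X^-$ with $X^-$ infinite-dimensional. All the hypotheses have been prepared in this section.

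We use the orthonormal basis $\{e_k^+\}$ of $X^+$, the slices $Y_n\coloneqq E_n^+\oplus X^-$, and the tails $Z_n\oplus X^-$. Evenness of $E_0$ follows from (F0), because the quadratic part is even. \cref{prop:bn} gives the lower bound $b_n\to+\infty$ on the positive sphere $\{\psi\in Z_n:\norm{\psi}=\rho_n\}$. By \cref{prop:an}, for any $R\ge \max\{R_n,\rho_n+1\}$ we have $\sup_{\partial B_R\cap Y_n}E_0\le 0<b_n$; this gives the linking boundary.

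We then define the minimax values by symmetric linking in the style of Benci:
\[
  c_n^0\coloneqq\inf_{\gamma\in\Gamma_n}\sup_{\psi\in B_R\cap Y_n}E_0(\gamma(\psi)),
\]
where $\Gamma_n$ is the class of odd continuous maps $\gamma$ on $B_R\cap Y_n$. Each $\gamma$ equals the identity on the boundary and is admissible, meaning of the form $\gamma=\id+K$ with $K$ mapping into finite-dimensional pieces of $X^+$ and the $X^-$-component of $\gamma$ being a homeomorphism. The intersection lemma says that every such $\gamma$ meets $\{\psi\in Z_n:\norm{\psi}=\rho_n\}$. This is proved by a degree or genus argument, after a Galerkin reduction of the $X^-$ direction or by using the $\tau$-topology of Kryszewski--Szulkin. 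It yields $c_n^0\ge b_n$. Finiteness of $c_n^0$ comes from the identity map and boundedness of $E_0$ on bounded sets, since $E_0$ is bounded above by $\tfrac12\norm{\psi^+}^2$ given $F_0\ge0$.

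Next we need a deformation lemma that preserves the admissible class. We use the Palais--Smale condition from \cref{prop:PS}. The deformation flow must be odd, which holds because $E_0$ is even. It must also be of the form identity plus compact on $X^-$. This is where the compact embedding (X2) enters: the nonlinear part of $\nabla E_0$ is compact, so the negative gradient flow of $\tfrac12\norm{\psi^+}^2-\tfrac12\norm{\psi^-}^2-\int F_0$ has the form $e^{-t}$ on $X^+$ and $e^{t}$ on $X^-$, plus a compact perturbation. Then a standard contradiction argument shows that each $c_n^0$ is a critical value. Since $c_n^0\ge b_n\to\infty$, there are infinitely many distinct critical values and critical points $\psi_n^0$. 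I expect the main obstacle to be this compatibility of the intersection property and the deformation with the infinite-dimensional $X^-$. The cleanest path is to cite the Bartsch--Ding fountain theorem (or Batkam--Colin) and check its hypotheses: weak sequential upper semicontinuity on $Y_n$, and $\nabla E_0$ being weakly sequentially continuous. These hold by (X2), Fatou, and the convexity of $F_0\ge0$ for the exact-power model; in the general case by $F_0\ge0$ together with strong $L^p$ convergence.

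Finally, if $\nabla_\HH E_0(X)\subset X$, then at a critical point $\psi$ of $E_0|_X$ the gradient $\nabla_\HH E_0(\psi)$ lies in $X$ and is orthogonal to $X$. Hence it vanishes, exactly as in \cref{prop:restricted_full_solution}, so each $\psi_n^0$ is a weak solution on $\HH$.
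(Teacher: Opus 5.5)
Your proposal follows the paper's proof. You invoke the Bartsch--Ding generalized fountain theorem, with evenness from \textnormal{(F0)}, the geometry from \cref{prop:bn,prop:an}, (PS) from \cref{prop:PS}, semicontinuity of $E_0$ and weak sequential continuity of $\nabla E_0$ from the compact embedding, and full criticality by orthogonality; your hand-built Benci linking and deformation sketch is then unnecessary.

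There are two points of precision. The hypothesis to verify is $\tau$-upper semicontinuity on all of $X$, not weak upper semicontinuity on $Y_n$. Your Fatou argument gives this once you note, as the paper does, that $F_0\ge0$ keeps $(\psi_n^-)$ bounded on $\{E_0\ge a\}$, so $\tau$-convergence there becomes weak convergence. Also, since $Y_n\cap Z_n=\{0\}$ with $Z_n=(E_n^+)^\perp\cap X^+$, your intersection claim fails for $\gamma=\id$ as written. The sphere bound of \cref{prop:bn} must be used on $Z_{n-1}$ when paired with $Y_n=E_n^+\oplus X^-$.
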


\begin{proof}
We apply the generalized fountain theorem for strongly indefinite functionals
on $X=X^+\oplus X^-$, in the form of
\cite[Theorem~4.8]{BartschDing2006}.
The functional $E_0$ is even by \textnormal{(F0)}. Moreover, writing
\[
E_0(\psi) = \tfrac12\|\psi^+\|^2 - \tfrac12\|\psi^-\|^2 -
\Psi(\psi),
\qquad
\Psi(\psi)\coloneqq \int_{\RR^N}F_0(x,\psi)\dx,
\]
the map $\Psi$ is of class $C^1$ on $X$ by~\cref{sec:assumptions}. Fix an orthonormal basis $(e_j)_{j\ge1}$ of $X^-$. We use the
Kryszewski--Szulkin topology $\tau$ on $X=X^+\oplus X^-$ induced by the norm
\[
\|\psi\|_\tau \coloneqq \|\psi^+\| + \sum_{j=1}^\infty 2^{-j}
\bigl|(\psi^-,e_j)_\HH\bigr|.
\]
Thus $\psi_n\to\psi$ in $\tau$ is equivalent to saying that $\psi_n^+\to\psi^+$ strongly in
$X^+$ and
\[
(\psi_n^-,e_j)_\HH\longrightarrow (\psi^-,e_j)_\HH
\qquad\text{for every }j\ge1.
\]
In particular, if $(\psi_n^-)$ is bounded in $X^-$, this coordinate-wise
convergence implies $\psi_n^-\rightharpoonup\psi^-$ in $X^-$. We verify explicitly the two continuity hypotheses used by the strongly
indefinite fountain theorem.

First, suppose that $\psi_n\to\psi$ in the $\tau$-topology and that
$E_0(\psi_n)\ge a$ for some $a\in\RR$. Then, as mentioned above, $\psi_n^+\to\psi^+$ strongly in
$X^+$. Since $F_0\ge0$ by \textnormal{(F3)}, we have the estimate
\[
a\le E_0(\psi_n)
\le
\tfrac12\|\psi_n^+\|^2-\tfrac12\|\psi_n^-\|^2,
\]
so $(\psi_n^-)$ is bounded. Hence $\psi_n^-\rightharpoonup\psi^-$ in $X^-$.
Consequently $\psi_n\rightharpoonup\psi$ in $X$, and compactness gives
$\psi_n\to\psi$ in $L^p$. The sequence $(\psi_n)$ is bounded in $X$, hence in
$L^2\cap L^p$. By the mean-value formula and \eqref{eq:f0_eps_growth}, for every
$\varepsilon>0$,
\[
|\Psi(\psi_n)-\Psi(\psi)| \le \varepsilon
\bigl(\|\psi_n\|_{L^2}+\|\psi\|_{L^2}\bigr)
\|\psi_n-\psi\|_{L^2}  + C_\varepsilon
\bigl(\|\psi_n\|_{L^p}^{p-1}+\|\psi\|_{L^p}^{p-1}\bigr)
\|\psi_n-\psi\|_{L^p}.
\]
The second term tends to zero. For the first term we use only boundedness of
$(\psi_n)$ and $(\psi_n-\psi)$ in $L^2$, obtaining
\[
  \limsup_{n\to\infty}|\Psi(\psi_n)-\Psi(\psi)|\le C\varepsilon.
\]
Letting $\varepsilon\downarrow0$ gives $\Psi(\psi_n)\to\Psi(\psi)$. Finally,
weak lower semicontinuity of the norm on $X^-$ gives
\[
\limsup_{n\to\infty}E_0(\psi_n)
\le
\tfrac12\|\psi^+\|^2-\tfrac12\|\psi^-\|^2-
\lim_{n\to\infty}\Psi(\psi_n)
=E_0(\psi),
\]
which is the required $\tau$-upper semicontinuity.

Second, let $\psi_n\rightharpoonup\psi$ in $X$. Again
$\psi_n\to\psi$ in $L^p$. We prove that the nonlinear part of the gradient is
weakly sequentially continuous. Fix $\varphi\in X$ and choose
$\varphi_m\in L^\infty_c(\RR^N;\CC^d)$ such that
\[
\|\varphi_m-\varphi\|_{L^2}
+
\|\varphi_m-\varphi\|_{L^p}
\longrightarrow0.
\]
For fixed $m$, the strong $L^p$ convergence implies convergence in measure on
$\operatorname{supp}\varphi_m$. By the Carath\'eodory property,
$f_0(x,\psi_n(x))\to f_0(x,\psi(x))$ in measure on this support. Moreover,
\eqref{eq:f0_eps_growth} with $\varepsilon=1$ gives
\[
|f_0(x,z)|\le C\bigl(|z|+|z|^{p-1}\bigr),
\]
and the boundedness of $(\psi_n)$ in $L^2\cap L^p$ implies uniform
integrability on $\operatorname{supp}\varphi_m$. Hence Vitali's theorem gives
\[
\int_{\RR^N}
\RE\inner{f_0(x,\psi_n)-f_0(x,\psi)}{\varphi_m}_{\CC^d}\dx
\longrightarrow0.
\]
The error made by replacing $\varphi_m$ with $\varphi$ is bounded uniformly in
$n$ by
\[
C\bigl(\|\varphi-\varphi_m\|_{L^2}
+\|\varphi-\varphi_m\|_{L^p}\bigr),
\]
using the same growth bound and boundedness of $(\psi_n)$ in $L^2\cap L^p$.
Letting first $n\to\infty$ and then $m\to\infty$ gives
\[
\int_{\RR^N}
\RE\inner{f_0(x,\psi_n)-f_0(x,\psi)}{\varphi}_{\CC^d}\dx
\longrightarrow0.
\]
Thus the nonlinear part of the gradient is weakly sequentially continuous.
Together with weak continuity of the linear spectral part, this proves
\[
\nabla_XE_0(\psi_n)\rightharpoonup\nabla_XE_0(\psi)
\qquad\text{in }X.
\]
The geometric assumptions are provided by~\cref{prop:bn,prop:an}: after
choosing the tail radius in \cref{prop:bn}, the slice radius in
\cref{prop:an} may be enlarged so that it is strictly larger. The
PS condition is given by \cref{prop:PS}. Therefore the generalized
fountain theorem applies and yields a sequence of critical values
$c_n^0$ diverging to infinity:
\[
c_n^0\ge b_n\longrightarrow+\infty.
\]
If $\nabla_\HH E_0(X)\subset X$, restricted criticality implies that
$\nabla_\HH E_0(\psi_n^0)$ is both orthogonal to $X$ and contained in $X$;
hence it vanishes, proving the final assertion.
\end{proof}

% =====================================================================
\section{Reduction to the positive spectral space}\label{sec:perturbation_symmetry}
% =====================================================================

The perturbation-from-symmetry argument is carried out after eliminating the
negative spectral variable. This avoids a strongly indefinite minimax
construction for the nonsymmetric functional. The price is an explicit
semiconvexity hypothesis on the perturbing primitive, namely
\textnormal{(G5)}. Convex perturbing primitives correspond to the case
$\mu\equiv0$ and are therefore covered without an amplitude restriction.

Throughout this section we assume \textnormal{(V1)}--\textnormal{(V3)},
\textnormal{(X1)}--\textnormal{(X2)}, and \textnormal{(G1)}--\textnormal{(G5)}.
We also use the compact embedding $X\hookrightarrow L^p(\RR^3;\CC^4)$.
The final deformation argument requires $\tau \in (1,p/2)$, while the fiber
concavity and anti-coercivity estimates below only use $\tau \in (1,2)$.
Additionally, the homogeneous part is
\begin{equation}\label{eq:exact_homogeneous_model}
 F_0(x,z)=\tfrac{b(x)}p|z|^p,
 \qquad b\in C_b(\RR^3;\RR), \quad 0<b_0\le b(x)\le b_1, \quad 2<p<3,
\end{equation}
and we write
\[
 P_0(u)\coloneqq\tfrac1p\int_{\RR^3}b(x)|u|^p\,\mathrm dx.
\]
Thus $P_0(u)=\int_{\RR^3}F_0(x,u)\,\mathrm dx$. We define the energy path by
\[
 E_\vartheta(u) \coloneqq E_0(u)-\vartheta \, \Phi(u),
 \qquad \vartheta\in[0,1].
\]
We regard the complex Hilbert spaces as real Hilbert spaces by taking the real
part of the Hermitian product.

\subsection{The negative-fiber maximizer}

Recall that $\lambda_1$ denotes the spectral gap of the operator $\Dmt$ as stated in \textnormal{(V3)}. Set
\[
 \kappa \coloneqq \tfrac{\|\mu\|_{L^\infty}}{\lambda_1}<1,
\]
where $\mu$ is the function in \textnormal{(G5)}. For $x\in X^+$ and
$v\in X^-$ define the energy path along the direction $v$ as
\[
 \mathfrak f_{\vartheta,x}(v) \coloneqq E_\vartheta(x+v).
\]

\begin{lemma}[Strong concavity of the negative fibers]
\label{lem:fibre_strong_concavity}
For every $\vartheta\in[0,1]$ and $x\in X^+$,
$\mathfrak f_{\vartheta,x}:X^-\to\RR$ is coercive from above and uniformly
strongly concave. More precisely, it satisfies
\begin{equation}\label{eq:fibre_strong_monotonicity}
 \bigl\langle D_v\mathfrak f_{\vartheta,x}(v)
       -D_v\mathfrak f_{\vartheta,x}(w),v-w\bigr\rangle
 \le -(1-\kappa)\|v-w\|^2, \qquad \text{for all } v,w\in X^-.
\end{equation}
Consequently, there is a unique maximizer
$h_\vartheta(x)\in X^-$, characterized by the identity
\begin{equation}\label{eq:fibre_stationarity}
 E_\vartheta'(x+h_\vartheta(x))[\eta]=0
 \qquad\text{for every }\eta\in X^-.
\end{equation}
In addition, the map $(\vartheta,x)\mapsto h_\vartheta(x)$ is continuous from
$[0,1]\times X^+$ to $X^-$.
\end{lemma}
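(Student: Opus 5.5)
Differentiating $\mathfrak f_{\vartheta,x}$ along $X^-$ with \eqref{eq:dE0} and \eqref{eq:dPhi} gives, for $v,w\in X^-$,
\[
 \bigl\langle D_v\mathfrak f_{\vartheta,x}(v)-D_v\mathfrak f_{\vartheta,x}(w),v-w\bigr\rangle
 =-\|v-w\|^2-\int_{\RR^3}\RE\inner{f_0(x,x+v)-f_0(x,x+w)}{v-w}\dx
 -\vartheta\int_{\RR^3}\RE\inner{g(x,x+v)-g(x,x+w)}{v-w}\dx .
\]
Here $x$ denotes both the fixed element of $X^+$ and the space variable; in the write-up I would rename the spinor $y$. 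I will bound the three terms on the right separately.

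\emph{First term.} The quadratic part is exactly $-\|v-w\|^2$, because $(x+v)^-=v$ and the splitting is orthogonal by \cref{prop:compact}.

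\emph{Second term.} For the exact-power core \eqref{eq:exact_homogeneous_model}, $f_0(x,z)=b(x)|z|^{p-2}z$ is the real gradient of the convex function $z\mapsto b(x)|z|^p/p$, with $b>0$. Its gradient is therefore monotone, $\RE\inner{f_0(x,z)-f_0(x,z')}{z-z'}\ge0$, so this integral is nonnegative and enters with a minus sign.

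\emph{Third term.} By \textnormal{(G5)} and $\vartheta\in[0,1]$, this term is at most $\|\mu\|_\infty\|v-w\|_{L^2}^2$. The spectral-gap inequality $\|\phi\|_{L^2}^2\le\lambda_1^{-1}\|\phi\|^2$, already used in \cref{prop:bn}, turns this into at most $\kappa\|v-w\|^2$.

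Adding the three bounds yields \eqref{eq:fibre_strong_monotonicity}.

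Strong monotonicity of $-D_v\mathfrak f_{\vartheta,x}$ is equivalent to strong concavity with modulus $1-\kappa$. Integrating along the segment from $0$ to $v$ gives
\[
\mathfrak f_{\vartheta,x}(v)\le \mathfrak f_{\vartheta,x}(0)+\langle D_v\mathfrak f_{\vartheta,x}(0),v\rangle-\tfrac{1-\kappa}2\|v\|^2,
\]
which is the coercivity from above. The functional is $C^1$, so it is continuous and concave, hence weakly upper semicontinuous on the Hilbert space $X^-$. A maximizer therefore exists, and strict concavity makes it unique. At an interior maximum of a $C^1$ functional the derivative vanishes, which gives \eqref{eq:fibre_stationarity}. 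Conversely, if $v$ is critical, then \eqref{eq:fibre_strong_monotonicity} with $w=h_\vartheta(x)$ forces $v=h_\vartheta(x)$. Hence the stationarity identity characterizes the maximizer.

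\emph{Continuity (main obstacle).} I expect this step to need the most care. Fix $(\vartheta_n,x_n)\to(\vartheta,x)$ and write $h_n=h_{\vartheta_n}(x_n)$, $h=h_\vartheta(x)$. Apply \eqref{eq:fibre_strong_monotonicity} for the fiber $(\vartheta_n,x_n)$ with $v=h_n$, $w=h$. Since $D_v\mathfrak f_{\vartheta_n,x_n}(h_n)=0$, this gives
\[
(1-\kappa)\|h_n-h\|^2\le \bigl\langle D_v\mathfrak f_{\vartheta_n,x_n}(h)-D_v\mathfrak f_{\vartheta,x}(h),\,h_n-h\bigr\rangle ,
\]
where I have also used $D_v\mathfrak f_{\vartheta,x}(h)=0$. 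It follows that $\|h_n-h\|\le(1-\kappa)^{-1}\|D_v\mathfrak f_{\vartheta_n,x_n}(h)-D_v\mathfrak f_{\vartheta,x}(h)\|_{(X^-)^*}$.

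This right-hand side tends to zero by two facts. First, $E_0'$ and $\Phi'$ are continuous from $\HH$ to $\HH^*$, which is the $C^1$ property recorded in \cref{sec:assumptions}; so $E_0'(x_n+h)\to E_0'(x+h)$. Second, $\vartheta_n\Phi'(x_n+h)\to\vartheta\Phi'(x+h)$, since $\Phi'$ is bounded near $x+h$. The point needing care is that this $C^1$ regularity of the Nemytskii terms is being assumed from \cref{sec:assumptions} rather than reproved here. If only local boundedness were available, I would instead use the weighted Hölder bounds of \cref{rem:tail_integrability} together with the compact $L^p$ embedding.
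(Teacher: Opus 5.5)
Your proof is correct. For the monotonicity inequality \eqref{eq:fibre_strong_monotonicity} and for the continuity of $(\vartheta,x)\mapsto h_\vartheta(x)$, it coincides with the paper's argument. It uses the same three-term splitting, the convexity of $z\mapsto b|z|^p/p$, and \textnormal{(G5)} combined with $\|\phi\|^2\ge\lambda_1\|\phi\|_{L^2}^2$. It also derives the same dual-norm estimate, by testing the inequality on the fiber $(\vartheta_n,x_n)$ with $v=h_n$, $w=h$.

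The genuine difference is in the existence step. The paper gets coercivity from the lower-order bound $|\Phi(u)|\le C\|u\|^\tau$ with $\tau<2$ and $P_0\ge0$, namely $\mathfrak f_{\vartheta,\xi}(v)\le\frac12\|\xi\|^2-\frac12\|v\|^2+C\|\xi+v\|^\tau$. It then runs the direct method, using the compact embedding $X\hookrightarrow L^p$ to pass to the limit in $P_0$ and $\Phi$ along weakly convergent maximizing sequences. You instead obtain both coercivity and weak upper semicontinuity from strong concavity alone. You integrate \eqref{eq:fibre_strong_monotonicity} along segments, and you use that a continuous concave functional on a Hilbert space is weakly upper semicontinuous.

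Your route is more economical: it needs neither compactness nor the growth of $\Phi$, only $C^1$ regularity and the monotonicity inequality. The paper's route yields an explicit coercivity bound that is uniform over $\vartheta\in[0,1]$ and over bounded sets of $\xi$. The paper cites this bound for the boundedness of $h_{\vartheta_j}(x_j)$ in the continuity step. As your version makes clear, that boundedness is not actually needed there.
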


\begin{proof}
Let $\xi\in X^+$ be fixed and let $v,w\in X^-$. For simplicity, we denote the relevant scalar product by
\[
(\star) \coloneqq \bigl\langle D_v\mathfrak f_{\vartheta,\xi}(v)
       -D_v\mathfrak f_{\vartheta,\xi}(w),v-w\bigr\rangle.
\]
Set $A_p(z)=|z|^{p-2}z$. 
Since the negative
quadratic part of $E_\vartheta$ is $-\|v\|^2/2$ on the fiber, we have
\begin{align*}
 (\star) & = -\|v-w\|^2 -\vartheta\bigl\langle
 \Phi'(\xi+v)-\Phi'(\xi+w),v-w\bigr\rangle \\
&\qquad -\int_{\RR^3} b(y)\,\RE\left\langle
 A_p\bigl(\xi(y)+v(y)\bigr)-A_p\bigl(\xi(y)+w(y)\bigr),
 v(y)-w(y)\right\rangle\dy.
\end{align*}
The map $z\mapsto |z|^p/p$ is convex, and $b\ge b_0>0$; hence the integral
in the second line is nonnegative. Moreover, \textnormal{(G5)}, the spectral
gap condition, and $\|u\|^2\ge\lambda_1\|u\|_{L^2}^2$ give
\[
 \bigl\langle \Phi'(\xi+v)-\Phi'(\xi+w),v-w\bigr\rangle
 \ge-\|\mu\|_\infty\|v-w\|_{L^2}^2
 \ge-\kappa\|v-w\|^2.
\]
Hence, putting everything together yields the inequality
\[
(\star)
 \le -(1-\vartheta\kappa)\|v-w\|^2
 \le -(1-\kappa)\|v-w\|^2,
\]
which proves \eqref{eq:fibre_strong_monotonicity}.
By the lower-order estimate on $\Phi$ and the continuous embedding
$X\hookrightarrow L^p$, we have
\[
 \mathfrak f_{\vartheta,\xi}(v)
 \le \tfrac12\|\xi\|^2-\tfrac12\|v\|^2+C\|\xi+v\|^\tau \implies \mathfrak f_{\vartheta,\xi}(v)\xrightarrow{\|v\|\to\infty} -\infty,
\]
uniformly
for $\vartheta\in[0,1]$ and $\xi$ in bounded sets. Any maximizing sequence is
therefore bounded. If $v_j\rightharpoonup v$ in $X^-$, then
$\xi+v_j\rightharpoonup \xi+v$ in $X$, and the compact embedding gives
$\xi+v_j\to\xi+v$ strongly in $L^p$. Hence
\[
 P_0(\xi+v_j)\to P_0(\xi+v),\qquad
 \Phi(\xi+v_j)\to\Phi(\xi+v),
\]
where the second convergence follows from \textnormal{(G3)} and weighted
H\"older. Since $-\|v\|^2/2$ is weakly upper semicontinuous, the direct
method gives a maximizer, and \eqref{eq:fibre_strong_monotonicity} gives both
uniqueness and \eqref{eq:fibre_stationarity}.

To prove the last assertion, consider the double sequence $(\vartheta_j,x_j)\to(\vartheta,x)$ and set
$v_j \coloneqq h_{\vartheta_j}(x_j)$. Coercivity gives boundedness of $(v_j)$. Let
$v=h_\vartheta(x)$. Since
\[
D_v\mathfrak f_{\vartheta_j,x_j}(v_j)=0 = D_v\mathfrak f_{\vartheta,x}(v),
\]
using \eqref{eq:fibre_strong_monotonicity} at the parameter
$(\vartheta_j,x_j)$ gives
\[
(1-\kappa)\|v_j-v\|^2
\le -\bigl\langle D_v\mathfrak f_{\vartheta_j,x_j}(v_j)
       -D_v\mathfrak f_{\vartheta_j,x_j}(v),v_j-v\bigr\rangle =\bigl\langle D_v\mathfrak f_{\vartheta_j,x_j}(v)
       -D_v\mathfrak f_{\vartheta,x}(v),v_j-v\bigr\rangle.
\]
Consequently, unless $v_j=v$,
\[
 (1-\kappa)\|v_j-v\|
 \le
 \left\|D_v\mathfrak f_{\vartheta_j,x_j}(v)
       -D_v\mathfrak f_{\vartheta,x}(v)\right\|_{(X^-)^*}.
\]
The last dual norm tends to zero because $x_j\to x$ strongly in $X^+$,
$\vartheta_j\to\vartheta$, $v$ is fixed, and the weighted Nemytskii maps
$u\mapsto b|u|^{p-2}u$ and $u\mapsto g(\cdot,u)$ are continuous into the
corresponding dual spaces. Thus $v_j\to v$ in $X^-$.
\end{proof}

\begin{definition}
The \emph{reduced path on the positive spectral space} is defined via the function $h_\vartheta$ as follows:
\begin{equation}\label{eq:reduced_path}
 J_\vartheta(x)=E_\vartheta(x+h_\vartheta(x)),
 \qquad x\in X^+, \quad \vartheta\in[0,1].
\end{equation}
\end{definition}

\begin{proposition}[Critical-point correspondence]
\label{prop:reduced_path}
The map $J:[0,1]\times X^+\to\RR$ belongs to
$C^1([0,1]\times X^+,\RR)$, with derivatives understood up to the endpoints
of $[0,1]$, and satisfies
\begin{align}
 J_\vartheta'(x)[\xi]
 &=E_\vartheta'(x+h_\vartheta(x))[\xi],
 \qquad \xi\in X^+,\label{eq:reduced_derivative}\\
 \partial_\vartheta J_\vartheta(x)
 &=-\Phi(x+h_\vartheta(x)).\label{eq:reduced_velocity}
\end{align}
Furthermore,
\[
 J_\vartheta'(x)=0
 \iff
 E_\vartheta'(x+h_\vartheta(x))[\varphi]=0
 \quad\text{for every }\varphi\in X.
\]
Thus critical points of $J_\vartheta$ correspond to critical points of the
restriction $E_\vartheta|_X$. If, for this value of $\vartheta$, one also has
\[
  \nabla_\HH E_\vartheta(X)\subset X,
\]
then the corresponding restricted critical points are full critical points on
$\HH$. In particular, for $\vartheta=1$, this is exactly the assumption
\textnormal{(X3)}.

In addition, the family $J$ satisfies the \emph{joint Palais--Smale condition}: if
$\vartheta_j\to\vartheta$, $J_{\vartheta_j}(x_j)$ is bounded, and
$\|J_{\vartheta_j}'(x_j)\|\to0$, then $(x_j)$ has a convergent subsequence in
$X^+$.
\end{proposition}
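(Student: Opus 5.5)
The strategy is the standard Lyapunov--Schmidt / envelope argument, made rigorous by the strong concavity of \cref{lem:fibre_strong_concavity}. Write $u_\vartheta(x)=x+h_\vartheta(x)$. The first target is the derivative formulas \eqref{eq:reduced_derivative}--\eqref{eq:reduced_velocity}, obtained from two-sided difference-quotient bounds rather than from an implicit function theorem. We avoid the latter because $E_\vartheta$ is only $C^1$: $A_p$ and $g$ need not be differentiable.

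For the $x$-derivative, fix $\vartheta$, $x,\xi\in X^+$, $t\ne0$, and set $v=h_\vartheta(x)$, $v_t=h_\vartheta(x+t\xi)$. The lower bound comes from maximality of $v_t$:
\[
J_\vartheta(x+t\xi)-J_\vartheta(x)\ge E_\vartheta(x+t\xi+v)-E_\vartheta(x+v).
\]
The upper bound comes from maximality of $v$:
\[
J_\vartheta(x+t\xi)-J_\vartheta(x)\le E_\vartheta(x+t\xi+v_t)-E_\vartheta(x+v_t).
\]
By the mean value theorem, both right-hand sides equal $t\,E_\vartheta'(x+s t\xi+w)[\xi]$ for some $s\in(0,1)$, with $w\in\{v,v_t\}$. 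Continuity of $h_\vartheta$ gives $v_t\to v$, and $E_\vartheta\in C^1$. Dividing by $t$, both sides therefore converge to $E_\vartheta'(u_\vartheta(x))[\xi]$. This gives the Gâteaux derivative \eqref{eq:reduced_derivative}.

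The same sandwich in the $\vartheta$ variable gives \eqref{eq:reduced_velocity}. Since $\partial_\vartheta E_\vartheta(u)=-\Phi(u)$ exactly, the two bounds are $-(\vartheta'-\vartheta)\Phi(x+h_{\vartheta}(x))$ and $-(\vartheta'-\vartheta)\Phi(x+h_{\vartheta'}(x))$, and continuity of $h$ lets both quotients converge. One-sided versions of this argument handle the endpoints $\vartheta\in\{0,1\}$. Both partial derivatives are continuous in $(\vartheta,x)$, because $(\vartheta,x)\mapsto u_\vartheta(x)$ is continuous by \cref{lem:fibre_strong_concavity}, $E'_\vartheta=E_0'-\vartheta\Phi'$ is jointly continuous, and $\Phi$ is continuous. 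Continuous Gâteaux partials yield joint $C^1$.

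For the correspondence, decompose $\varphi\in X$ as $\varphi=\varphi^++\varphi^-$ using \cref{prop:compact}. The fiber identity \eqref{eq:fibre_stationarity} kills $E_\vartheta'(u_\vartheta(x))[\varphi^-]$, so $E_\vartheta'(u_\vartheta(x))[\varphi]=J'_\vartheta(x)[\varphi^+]$, and the equivalence follows. Conversely, a critical point $u$ of $E_\vartheta|_X$ satisfies the fiber equation, so uniqueness of the fiber maximizer forces $u^-=h_\vartheta(u^+)$. The full-space upgrade under $\nabla_\HH E_\vartheta(X)\subset X$ is the argument of \cref{prop:restricted_full_solution}.

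The joint Palais--Smale condition is the main obstacle. Let $u_j=u_{\vartheta_j}(x_j)$. By the identity just proved, $\|E_{\vartheta_j}'(u_j)\|_{X^*}=\|J'_{\vartheta_j}(x_j)\|\to0$, and $E_{\vartheta_j}(u_j)$ is bounded. The plan is to rerun Step 1 of \cref{prop:PS} with the perturbation included. Testing with $\tfrac12 u_j$ gives
\[
(\tfrac p2-1)P_0(u_j)\le C+o(\|u_j\|)+C\|u_j\|_{L^p}^\tau,
\]
where the last term comes from (G4) and weighted Hölder. Since $\tau<p$, the term $C\|u_j\|_{L^p}^\tau$ is absorbed by $b_0\|u_j\|_{L^p}^p/p$, giving $\|u_j\|_{L^p}^p\le C(1+\|u_j\|)$. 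Testing with $u_j^+$ and $-u_j^-$ then adds terms bounded by $B_\tau\|u_j\|_{L^p}^{\tau-1}\|u_j\|$, which are sublinear in $\|u_j\|^2$. So the contradiction argument bounding $\|u_j\|$ goes through.

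With $u_j$ bounded, pass to a subsequence with $u_j\rightharpoonup u$ and strong $L^p$ convergence. Repeat Step 2 of \cref{prop:PS} for $w_j^+=(u_j-u)^+$. The $g$-term tends to zero since $\int b_\tau|u_j|^{\tau-1}|w_j^+|\le B_\tau\|u_j\|_{L^p}^{\tau-1}\|w_j^+\|_{L^p}\to0$, and the dependence on $\vartheta_j$ is harmless because $\vartheta_j\in[0,1]$. This gives $x_j=u_j^+\to u^+$ strongly, as required; continuity of $h$ then also gives convergence of $u_j$.
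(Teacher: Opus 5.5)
Your proposal is correct and follows essentially the paper's route: the two-sided maximality sandwich in $x$ and in $\vartheta$ (using that $E_\vartheta$ is affine in $\vartheta$), the fiber identity for the correspondence, and, for the joint Palais--Smale condition, the exact-homogeneity identity giving $P_0(u_j)\le C(1+\varepsilon_j\|u_j\|)$ followed by testing with $u_j^\pm$. The differences are minor economies. You obtain Gâteaux derivatives and upgrade to $C^1$ via continuity of $(\vartheta,x)\mapsto E_\vartheta'(x+h_\vartheta(x))$, rather than estimating the $o(\|y\|)$ remainder directly. In the compactness step you test only with $w_j^+$ and use the growth bounds together with $w_j^+\to0$ in $L^p$. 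This avoids the paper's $L^{p'}$ convergence of the Nemytskii maps and the identification $E_\vartheta'(u)=0$, neither of which is needed for convergence of $x_j=u_j^+$.
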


\begin{proof}
Continuity follows from \cref{lem:fibre_strong_concavity}. We first prove
Fr\'echet differentiability with respect to $x$. For $y\in X^+$, the
maximizing property gives
\begin{align*}
 E_\vartheta(x+y+h_\vartheta(x))-E_\vartheta(x+h_\vartheta(x))
 &\le J_\vartheta(x+y)-J_\vartheta(x)\\
 &\le E_\vartheta(x+y+h_\vartheta(x+y))
      -E_\vartheta(x+h_\vartheta(x+y)).
\end{align*}
The lower bound equals
\[
 E_\vartheta'(x+h_\vartheta(x))[y]+o(\|y\|).
\]
For the upper bound, the fundamental theorem of calculus gives
\[
 \int_0^1
 E_\vartheta'\bigl(x+h_\vartheta(x+y)+sy\bigr)[y]\,\mathrm ds.
\]
Since $h_\vartheta(x+y)\to h_\vartheta(x)$ as $y\to0$ and
$E_\vartheta'$ is continuous, this expression is also
\[
 E_\vartheta'(x+h_\vartheta(x))[y]+o(\|y\|).
\]
The two-sided estimate proves the Fr\'echet derivative formula
\eqref{eq:reduced_derivative}. The same comparison for an increment in
$\vartheta$, with one-sided increments at the endpoints of $[0,1]$, gives
\eqref{eq:reduced_velocity}. Since $(\vartheta,x)\mapsto x+h_\vartheta(x)$ is
continuous, the Nemytskii maps in $E_\vartheta'$ are continuous, and $\Phi$ is
continuous, the right-hand sides of
\eqref{eq:reduced_derivative}--\eqref{eq:reduced_velocity} depend
continuously on $(\vartheta,x)$. Thus the two partial derivatives are
jointly continuous, and the standard product-space criterion yields
$J\in C^1([0,1]\times X^+,\RR)$. The critical-point correspondence follows
from \eqref{eq:reduced_derivative} and the negative stationarity condition
\eqref{eq:fibre_stationarity}.

\smallskip
We now prove the joint Palais--Smale condition in detail. Set
\[
 u_j=x_j+h_{\vartheta_j}(x_j),
 \qquad
 \varepsilon_j=\|J_{\vartheta_j}'(x_j)\|.
\]
By \eqref{eq:fibre_stationarity}, the restriction of
$E_{\vartheta_j}'(u_j)$ to $X^-$ vanishes; by
\eqref{eq:reduced_derivative}, its restriction to $X^+$ is
$J_{\vartheta_j}'(x_j)$. Since the splitting is orthogonal, this immediately implies that
\begin{equation}\label{eq:joint_PS_full_derivative}
  \|E_{\vartheta_j}'(u_j)\|_{X^*}=\varepsilon_j\longrightarrow0.
\end{equation}
Exact $p$-homogeneity gives
\begin{equation}\label{eq:joint_PS_energy_identity}
 E_{\vartheta_j}(u_j)-\tfrac12E_{\vartheta_j}'(u_j)[u_j]
 =\left(\tfrac p2-1\right)P_0(u_j)
  +\vartheta_j\left(\tfrac12\Phi'(u_j)[u_j]-\Phi(u_j)\right).
\end{equation}
Here $E_{\vartheta_j}(u_j)=J_{\vartheta_j}(x_j)$, so the energies are bounded.
Moreover, \eqref{eq:joint_PS_full_derivative} gives
$E_{\vartheta_j}'(u_j)[u_j]=O(\varepsilon_j\|u_j\|)$. By \textnormal{(G4)}
and weighted H\"older,
\[
 \left|\tfrac12\Phi'(u_j)[u_j]-\Phi(u_j)\right|
 \le C\|u_j\|_{L^p}^{\tau},
\]
and $P_0(u_j)\ge (b_0/p)\|u_j\|_{L^p}^p$. Hence
\[
 \left(\tfrac p2-1\right)P_0(u_j)
 \le C+C\varepsilon_j\|u_j\|+C P_0(u_j)^{\tau/p}.
\]
Since $\tau/p<1$, Young's inequality absorbs the final term into the left-hand
side. Therefore
\begin{equation}\label{eq:joint_PS_P0_bound}
  P_0(u_j)\le C\bigl(1+\varepsilon_j\|u_j\|\bigr).
\end{equation}
Testing $E_{\vartheta_j}'(u_j)$ with $u_j^+$ and with $-u_j^-$, respectively,
and using \textnormal{(G2)}, weighted H\"older, and the continuous
$X\hookrightarrow L^p$ embedding, gives bounds on the norm of both components:
\begin{align*}
 \|u_j^+\|
 &\le \varepsilon_j
  +C\Bigl(P_0(u_j)^{(p-1)/p}
          +P_0(u_j)^{(\tau-1)/p}\Bigr),\\
 \|u_j^-\|
 &\le \varepsilon_j
  +C\Bigl(P_0(u_j)^{(p-1)/p}
          +P_0(u_j)^{(\tau-1)/p}\Bigr).
\end{align*}
For example,
\[
 \|b_\tau|u_j|^{\tau-1}\|_{L^{p'}}
 \le B_\tau\|u_j\|_{L^p}^{\tau-1},
 \qquad
 \|b|u_j|^{p-1}\|_{L^{p'}}
 \le C P_0(u_j)^{(p-1)/p}.
\]
Combining these estimates with \eqref{eq:joint_PS_P0_bound}, and using
$\varepsilon_j\le1$ for large $j$, yields
\[
  \|u_j\|\le C\bigl(1+\|u_j\|^{(p-1)/p}\bigr).
\]
Because $(p-1)/p<1$, the sequence $(u_j)$ is bounded in $X$. After taking a subsequence,
\[
 u_j\rightharpoonup u\quad\text{in }X,
 \qquad
 u_j\to u\quad\text{in }L^p(\RR^3;\CC^4)\text{ and almost everywhere.}
\]
The exact power map satisfies $b|u_j|^{p-2}u_j\longrightarrow b|u|^{p-2}u$ in $L^{p'}$. 
We also have
\begin{equation}\label{eq:weighted_Nemytskii_convergence}
 g(\cdot,u_j)\longrightarrow g(\cdot,u)
 \quad\text{in }L^{p'}(\RR^3;\CC^4).
\end{equation}
For completeness, set $p'=p/(p-1)$ and choose
\[
 K_m=B_m(0)\cap \bigl\{ x \ : \ b_\tau(x)\le m \bigr\}.
\]
Then $K_m$ has finite measure, $b_\tau$ is bounded on $K_m$, and dominated
convergence gives
\[
 \|b_\tau\mathbf1_{\RR^3\setminus K_m}\|_{L^{p/(p-\tau)}}
 \longrightarrow0.
\]
On $K_m$, the Carath\'eodory property and a.e.\ convergence give
$g(x,u_j(x))\to g(x,u(x))$ almost everywhere. Since $b_\tau$ is bounded on $K_m$ and
$(\tau-1)p'<p$, the family $|g(\cdot,u_j)|^{p'}$ is uniformly integrable on
$K_m$; hence Vitali's theorem gives convergence in $L^{p'}(K_m)$. On the
complement, the growth bound and H\"older's inequality give, uniformly in $j$,
\[
 \bigl\|(g(\cdot,u_j)-g(\cdot,u))
       \mathbf1_{\RR^3\setminus K_m} \bigr\|_{L^{p'}}
 \le C \bigl\|b_\tau\mathbf1_{\RR^3\setminus K_m}\bigr\|_{L^{p/(p-\tau)}}
       \bigl(\|u_j\|_{L^p}^{\tau-1}+\|u\|_{L^p}^{\tau-1}\bigr),
\]
which proves \eqref{eq:weighted_Nemytskii_convergence}. Since
$\vartheta_j\to\vartheta$, it also follows that
\[
 \vartheta_jg(\cdot,u_j)\to\vartheta g(\cdot,u)
 \quad\text{in }L^{p'}(\RR^3;\CC^4).
\]
Passing to the limit in \eqref{eq:joint_PS_full_derivative} now gives
$E_\vartheta'(u)=0$. Set $w_j=u_j-u$. Since
$E_{\vartheta_j}'(u_j)\to0$ in $X^*$, $E_\vartheta'(u)=0$, and
$(w_j^\pm)$ is bounded in $X$, we have
\[
 \bigl(E_{\vartheta_j}'(u_j)-E_\vartheta'(u)\bigr)[w_j^\pm]=o(1).
\]
Subtracting the two equations and testing first with $w_j^+$ gives
\begin{align*}
 \|w_j^+\|^2
 &=\bigl(E_{\vartheta_j}'(u_j)-E_\vartheta'(u)\bigr)[w_j^+] \\
 &\quad+\int_{\RR^3}\RE\left\langle
 b\bigl(|u_j|^{p-2}u_j-|u|^{p-2}u\bigr)
 +\vartheta_jg(x,u_j)-\vartheta g(x,u),w_j^+
 \right\rangle\,\mathrm dx=o(1).
\end{align*}
The first term is $o(1)$ by the preceding estimate. The integral is $o(1)$
by the $L^{p'}$ convergences above and boundedness of $w_j^+$ in $L^p$.
Testing with $-w_j^-$ gives in the same way $\|w_j^-\|^2=o(1)$. Hence
$u_j\to u$ strongly in $X$. Finally, $x_j=P_m u_j\to P_m u$ in $X^+$,
which proves the joint Palais--Smale condition.
\end{proof}

\begin{remark}\label{rem:J0_even}
Since $E_0$ is even and the negative-fiber maximizer is unique,
$h_0(-x)=-h_0(x)$. Hence $J_0(-x)=J_0(x)$; that is, the initial reduced
functional is also even.
\end{remark}

\subsection{Velocity bounds and finite-dimensional anti-coercivity}

In this section we establish the velocity bounds and show that the reduced path is uniformly anti-coercive, with respect to $\vartheta$, on every finite-dimensional subspace of $X^+$.

\begin{lemma}[Bolle velocity bounds]
\label{lem:reduced_velocity_bounds}
Set $\alpha \coloneqq \tau/p$.  For every $b>0$ there is $C_b>0$ such that
\begin{equation}\label{eq:reduced_H2}
|J_\vartheta(x)|\le b \implies |\partial_\vartheta J_\vartheta(x)|
 \le C_b\bigl(\|J_\vartheta'(x)\|+1\bigr) \bigl(\|x\|+1 \bigr).
\end{equation}
Additionally, at every critical point of $J_\vartheta$,
\begin{equation}\label{eq:reduced_H3}
 |\partial_\vartheta J_\vartheta(x)|
 \le C\bigl(1+|J_\vartheta(x)|^\alpha\bigr).
\end{equation}
\end{lemma}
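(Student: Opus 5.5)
By \eqref{eq:reduced_velocity}, $\partial_\vartheta J_\vartheta(x)=-\Phi(u)$ with $u\coloneqq x+h_\vartheta(x)$. By \textnormal{(G3)} and weighted H\"older,
\[
|\Phi(u)|\le A_\tau\|u\|_{L^p}^\tau\le C\,P_0(u)^{\tau/p}.
\]
Both estimates therefore reduce to controlling $P_0(u)$. The plan is to obtain that control from the virial identity \eqref{eq:joint_PS_energy_identity}, which holds at every point of $X$ and not only along Palais--Smale sequences:
\[
E_\vartheta(u)-\tfrac12E_\vartheta'(u)[u]=\left(\tfrac p2-1\right)P_0(u)+\vartheta\left(\tfrac12\Phi'(u)[u]-\Phi(u)\right).
\]

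\textbf{Key orthogonality.} By \eqref{eq:fibre_stationarity}, $E_\vartheta'(u)$ vanishes on $X^-$. By \eqref{eq:reduced_derivative}, $E_\vartheta'(u)[u]=E_\vartheta'(u)[x]=J_\vartheta'(x)[x]$. Hence
\[
|E_\vartheta'(u)[u]|\le\|J_\vartheta'(x)\|\,\|x\|,
\]
which involves only $\|x\|$ and not $\|u\|$. This is the point where the reduction matters, since $h_\vartheta(x)$ could a priori be large.

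Combining this with $E_\vartheta(u)=J_\vartheta(x)$ and with \textnormal{(G4)}, which gives $|\tfrac12\Phi'(u)[u]-\Phi(u)|\le C_\tau\|u\|_{L^p}^\tau\le C P_0(u)^{\tau/p}$, we get
\[
\left(\tfrac p2-1\right)P_0(u)\le|J_\vartheta(x)|+\tfrac12\|J_\vartheta'(x)\|\,\|x\|+C P_0(u)^{\tau/p}.
\]
Since $\tau/p<1$, Young's inequality absorbs the last term, yielding
\[
P_0(u)\le C\bigl(1+|J_\vartheta(x)|+\|J_\vartheta'(x)\|\,\|x\|\bigr).
\]

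\textbf{Proof of \eqref{eq:reduced_H2}.} Suppose $|J_\vartheta(x)|\le b$. Then
\[
|\partial_\vartheta J_\vartheta(x)|\le C\,P_0(u)^{\tau/p}\le C\bigl(1+b+\|J_\vartheta'(x)\|\,\|x\|\bigr)^{\tau/p}.
\]
Since $\tau/p<1$, any quantity $s\ge0$ satisfies $s^{\tau/p}\le 1+s$. The right-hand side is therefore at most $C_b(1+\|J_\vartheta'(x)\|)(1+\|x\|)$, which is the claim.

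\textbf{Proof of \eqref{eq:reduced_H3}.} At a critical point $J_\vartheta'(x)=0$, the bound becomes $P_0(u)\le C(1+|J_\vartheta(x)|)$. Hence
\[
|\partial_\vartheta J_\vartheta(x)|\le C\bigl(1+|J_\vartheta(x)|\bigr)^{\alpha}\le C\bigl(1+|J_\vartheta(x)|^\alpha\bigr),
\]
using subadditivity of $s\mapsto s^\alpha$ for $\alpha<1$.

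\textbf{Expected obstacle.} The only delicate step is the orthogonality identity $E_\vartheta'(u)[u]=J_\vartheta'(x)[x]$. It requires the fiber stationarity to hold for all of $h_\vartheta(x)\in X^-$, which \eqref{eq:fibre_stationarity} guarantees. Everything else is the Young absorption already used in the joint Palais--Smale argument. The constants must be checked to be uniform in $\vartheta\in[0,1]$; they are, because $\vartheta$ enters only as a factor bounded by $1$.
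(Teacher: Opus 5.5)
Your proposal is correct and follows the paper's proof essentially step by step. Both arguments use fiber stationarity to get $E_\vartheta'(u)[u]=J_\vartheta'(x)[x]$, then the exact-homogeneity virial identity together with (G3)--(G4) and Young absorption to bound $P_0(u)$, and finish with an elementary power inequality: you use $s^{\alpha}\le 1+s$, where the paper uses $(1+AB)^\alpha\le C(1+A)(1+B)$.
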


\begin{proof}
Set $u=x+h_\vartheta(x)$ and $D=J_\vartheta'(x)$. Since the negative
component of $E_\vartheta'(u)$ vanishes,
$E_\vartheta'(u)[u]=D[x]$. Exact $p$-homogeneity yields the identity
\begin{equation}\label{eq:reduced_radial_identity}
 J_\vartheta(x)-\tfrac12D[x]
 =\left(\tfrac p2-1\right)P_0(u)
 +\vartheta\left(\tfrac12\Phi'(u)[u]-\Phi(u)\right).
\end{equation}
By \textnormal{(G3)}--\textnormal{(G4)} and
$P_0(u)\ge c\|u\|_{L^p}^p$, it implies that
\[
 |\Phi(u)|+
 \left|\tfrac12\Phi'(u)[u]-\Phi(u)\right|
 \le C P_0(u)^\alpha.
\]
If $|J_\vartheta(x)|\le b$, \eqref{eq:reduced_radial_identity} and Young's
inequality imply
\[
 P_0(u)\le C_b\bigl(1+\|D\| \|x\|\bigr).
\]
Together with \eqref{eq:reduced_velocity}, this gives
\[
 |\partial_\vartheta J_\vartheta(x)|
 \le C_b\bigl(1+\|D\|\|x\|\bigr)^\alpha.
\]
Since $0<\alpha<1$,
\[
 (1+AB)^\alpha\le C(1+A)(1+B),\qquad A,B\ge0,
\]
and \eqref{eq:reduced_H2} follows. If $D=0$, the same identity gives
$P_0(u)\le C(1+|J_\vartheta(x)|)$, and
\eqref{eq:reduced_H3} follows.
\end{proof}

\begin{lemma}[Uniform anti-coercivity]
\label{lem:reduced_anti_coercivity}
For every finite-dimensional subspace $W\subset X^+$,
\[
 \sup_{\vartheta\in[0,1]}J_\vartheta(x)\longrightarrow-\infty
 \qquad\text{as }\|x\|\to\infty,
 \quad x\in W.
\]
\end{lemma}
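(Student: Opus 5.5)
We need $\sup_\vartheta J_\vartheta(x)\to-\infty$ as $\|x\|\to\infty$ in a fixed finite-dimensional $W\subset X^+$. The plan is to bound $J_\vartheta(x)$ from above by a quantity independent of $\vartheta$ and of the negative component, and then reuse the finite-dimensional equivalence of norms exactly as in \cref{prop:an}.

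\textbf{Step 1: a uniform upper bound.} By definition $J_\vartheta(x)=E_\vartheta(x+h)$ with $h=h_\vartheta(x)\in X^-$. By orthogonality of the splitting (\cref{prop:compact}),
\[
 J_\vartheta(x)=\tfrac12\|x\|^2-\tfrac12\|h\|^2-P_0(x+h)-\vartheta\Phi(x+h).
\]
From \textnormal{(G3)}, weighted H\"older, and $\vartheta\in[0,1]$,
\[
 |\vartheta\Phi(u)|\le A_\tau\|u\|_{L^p}^\tau .
\]
Since $P_0(u)\ge (b_0/p)\|u\|_{L^p}^p$ and $\tau<p$, Young's inequality gives
\[
 -P_0(u)+A_\tau\|u\|_{L^p}^\tau\le -\tfrac{b_0}{2p}\|u\|_{L^p}^p+C.
\]
Hence, for every $\vartheta$,
\[
 J_\vartheta(x)\le \sup_{v\in X^-}\Bigl(\tfrac12\|x\|^2-\tfrac12\|v\|^2-\tfrac{b_0}{2p}\|x+v\|_{L^p}^p\Bigr)+C,
\]
and the right-hand side does not depend on $\vartheta$.

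\textbf{Step 2: the key lower bound on the $L^p$-norm.} Because $W$ is finite-dimensional, it suffices to find $\delta_W>0$ with
\[
 \|x+v\|_{L^p}\ge\delta_W\|x\| \qquad\text{for all } x\in W,\ v\in X^-.
\]
This is the main obstacle. I would prove it by contradiction, mirroring Case 1 of \cref{prop:an}. Suppose there are $x_j\in W$ with $\|x_j\|=1$ and $v_j\in X^-$ such that $\|x_j+v_j\|_{L^p}\to0$. Two cases arise.
\begin{itemize}
\item If $(v_j)$ is bounded, pass to the limits $x_j\to x$ in $W$ (strongly, by finite dimension) and $v_j\rightharpoonup v$ weakly in $X^-$. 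The compact embedding then gives $x+v=0$ in $L^p$, hence in $X$ by injectivity of the embedding. So $x=-v\in X^+\cap X^-=\{0\}$, contradicting $\|x\|=1$.
\item If $\|v_j\|\to\infty$, this case need not be excluded. In the fiber bound it is harmless, because the term $-\tfrac12\|v\|^2$ already wins.
\end{itemize}
To make this precise, I would use the weaker statement: for every $M$, there is $\delta_{W,M}>0$ with $\|x+v\|_{L^p}\ge\delta_{W,M}\|x\|$ whenever $\|v\|\le M\|x\|$. The proof is the homogeneous contradiction argument above, with $v_j$ now bounded.

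\textbf{Step 3: conclusion.} Split the supremum over $v\in X^-$.
\begin{itemize}
\item If $\|v\|\ge 2\|x\|$, the expression is at most $\tfrac12\|x\|^2-2\|x\|^2+C=-\tfrac32\|x\|^2+C$.
\item If $\|v\|\le 2\|x\|$, it is at most $\tfrac12\|x\|^2-\tfrac{b_0}{2p}\delta_{W,2}^p\|x\|^p+C$.
\end{itemize}
Since $p>2$, both bounds tend to $-\infty$ as $\|x\|\to\infty$, uniformly in $\vartheta$. This proves the lemma.
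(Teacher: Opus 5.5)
Your proof is correct. It reaches the lemma by a more direct, quantitative route than the paper.

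\textbf{The paper's route.} The paper first passes to the full slice, proving $\sup_{\vartheta}E_\vartheta(u)\to-\infty$ for $u\in W\oplus X^-$ with $\|u\|\to\infty$, using $\|x+v\|\ge\|x\|$. It then argues by contradiction on normalized sequences $y_j=u_j/s_j=a_j+b_j$:
\begin{itemize}
\item the perturbation is discarded at the quadratic scale, since $|\Phi(u_j)|/s_j^2\le Cs_j^{\tau-2}\to0$, which uses $\tau<2$;
\item the case $a=0$ is ruled out because the quadratic part tends to $-\tfrac12$;
\item the case $a\neq0$ is ruled out by showing $y\neq0$ in $L^p$ via compactness and $X^+\cap X^-=\{0\}$, so that $P_0(u_j)/s_j^2\to+\infty$.
\end{itemize}

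\textbf{Your route.} You remove both $\vartheta$ and $\Phi$ at the outset by Young absorption into $P_0$, which needs only $\tau<p$. You then isolate the compactness step as the cone estimate $\|x+v\|_{L^p}\ge\delta_{W,M}\|x\|$ for $\|v\|\le M\|x\|$. This is the same argument as the paper's case $a\neq0$. Your split $\|v\|\ge2\|x\|$ versus $\|v\|\le2\|x\|$ plays the role of the dichotomy $a=0$ versus $a\neq0$.

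\textbf{What each buys.} Your version yields the explicit bound
\[
 \sup_{\vartheta\in[0,1]}J_\vartheta(x)\le\max\Bigl\{-\tfrac32\|x\|^2,\ \tfrac12\|x\|^2-\tfrac{b_0}{2p}\delta_{W,2}^p\|x\|^p\Bigr\}+C,
\]
valid for any $\tau<p$. The same two cases also give the paper's full-slice statement. The paper's version is shorter and needs no auxiliary estimate.

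\textbf{One presentational point.} In Step 2, drop the first formulation, a bound $\|x+v\|_{L^p}\ge\delta_W\|x\|$ for all $v\in X^-$. You neither prove it nor need it; state the cone version directly.
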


\begin{proof}
Because $X^+\perp X^-$, one has $\|x+v\|\ge\|x\|$ for
$x\in W$ and $v\in X^-$. It is therefore enough to prove
\begin{equation}\label{eq:full_slice_uniform_anticoercivity}
 \sup_{\vartheta\in[0,1]}E_\vartheta(u)\longrightarrow-\infty
 \qquad\text{as }\|u\|\to\infty,
 \quad u\in W\oplus X^-.
\end{equation}
Suppose that \eqref{eq:full_slice_uniform_anticoercivity} fails. Then there
are $\vartheta_j\in[0,1]$ and
$u_j=x_j+v_j\in W\oplus X^-$ such that
\[
 s_j\coloneqq\|u_j\|\longrightarrow\infty,
 \qquad
 E_{\vartheta_j}(u_j)\ge-C.
\]
Set $y_j=u_j/s_j=a_j+b_j$, where $a_j=x_j/s_j\in W$ and
$b_j=v_j/s_j\in X^-$. After passing to a subsequence,
\[
 a_j\to a\quad\text{strongly in }W,
 \qquad
 b_j\rightharpoonup b\quad\text{weakly in }X^-,
\]
and hence, by compactness of $X\hookrightarrow L^p$, the sum converges:
\begin{equation}\label{eq:normalized_slice_Lp_limit}
  y_j\to y\coloneqq a+b\qquad\text{strongly in }L^p.
\end{equation}
In addition, the perturbation is negligible at the quadratic scale:
\begin{equation}\label{eq:lower_order_quadratic_negligible}
 \tfrac{|\Phi(u_j)|}{s_j^2}
 \le C\tfrac{\|u_j\|_{L^p}^{\tau}}{s_j^2}
 \le Cs_j^{\tau-2}\longrightarrow0.
\end{equation}
If $a=0$, then $\|a_j\|\to0$ and, because
$\|a_j\|^2+\|b_j\|^2=1$, one has $\|b_j\|^2\to1$. Since $P_0\ge0$,
\[
 \limsup_{j\to\infty}\tfrac{E_{\vartheta_j}(u_j)}{s_j^2}
 \le\lim_{j\to\infty}
 \left(\tfrac12\|a_j\|^2-\tfrac12\|b_j\|^2\right)=-\tfrac12,
\]
where \eqref{eq:lower_order_quadratic_negligible} was used. This contradicts
$E_{\vartheta_j}(u_j)/s_j^2\ge-C/s_j^2$.

Assume now that $a\ne0$.  We claim that $y\ne0$ in $L^p$. Indeed, if
$y=0$ a.e., then $a=-b$ as distributions. Since both sides belong to $X$,
this is equality in $X$; hence
$a\in X^+\cap X^-=\{0\}$, a contradiction. Thus
$\|y\|_{L^p}>0$. By \eqref{eq:normalized_slice_Lp_limit} and
$0<b_0\le b(x)$,
\[
 \int_{\RR^3}b(x)|y_j|^p\,\mathrm dx
 \longrightarrow
 \int_{\RR^3}b(x)|y|^p\,\mathrm dx
 \ge b_0\|y\|_{L^p}^p>0.
\]
Consequently,
\[
 \tfrac{P_0(u_j)}{s_j^2}
 =\tfrac{s_j^{p-2}}p\int_{\RR^3}b(x)|y_j|^p\,\mathrm dx
 \longrightarrow+\infty.
\]
The quadratic quotient is bounded and the perturbation quotient tends to
zero by \eqref{eq:lower_order_quadratic_negligible}; therefore
$E_{\vartheta_j}(u_j)/s_j^2\to-\infty$, again a contradiction. This proves
\eqref{eq:full_slice_uniform_anticoercivity}. Finally,
$J_\vartheta(x)=\max_{v\in X^-}E_\vartheta(x+v)$ gives the asserted
anti-coercivity.
\end{proof}

\subsection{Polynomial deformation from symmetry}

We use the polynomial deformation-from-symmetry framework developed in
\cite{Bolle1999,BolleGhoussoubTehrani2000,ChambersGhoussoub2001}, in the
$C^1$ formulation stated by
\citet[Theorem~3.1]{SalvatoreSquassina2003}. We only need the special case in
which the finite-dimensional initial space in the abstract theorem is
$\{0\}$.

\begin{theorem}[Polynomial deformation from symmetry]
\label{thm:polynomial_deformation}
Let $H$ be a separable real Hilbert space and let
$J:[0,1]\times H\to\RR$ be a $C^1$ family. Suppose that the following conditions hold:
\begin{enumerate}[(i)]
 \item the family $J$ satisfies the joint Palais--Smale condition;
 \item the functional $J_\vartheta$ is even at $\vartheta=0$;
 \item the local velocity estimate analogous to \eqref{eq:reduced_H2} holds;
 \item at critical points,
 \[
   |\partial_\vartheta J_\vartheta(x)|
   \le C(1+|J_\vartheta(x)|^\alpha),
   \qquad 0\le\alpha<1;
 \]
 \item $J_\vartheta$ is uniformly anti-coercive on every finite-dimensional
 subspace of $H$.
\end{enumerate}
Let $H_k=\operatorname{span}\{e_1,\ldots,e_k\}$ for an orthonormal basis of
$H$, let
\[
 \mathscr H = \bigl\{\gamma\in C(H,H) \ : \ \gamma\text{ is odd and }
 \gamma(u)=u\text{ for }\|u\|\text{ sufficiently large} \bigr\},
\]
and define the critical (Bolle) levels
\begin{equation}\label{eq:bolle_levels}
 c_k \coloneqq \inf_{\gamma\in\mathscr H}\sup_{u\in\gamma(H_k)}J_0(u).
\end{equation}
If, for some $\gamma_0>1/(1-\alpha)$, they satisfy
\begin{equation}\label{eq:bolle_levels_assumption}
 c_k\ge C_1k^{\gamma_0}-C_2,
\end{equation}
then $J_1$ has a sequence of critical values tending to $+\infty$.
\end{theorem}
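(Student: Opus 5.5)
This theorem is quoted from \citet[Theorem~3.1]{SalvatoreSquassina2003}, specialized to a trivial initial space. The plan is therefore to \emph{reduce to that abstract statement}: check that hypotheses (i)--(v) are exactly those of the cited result, rather than redo the deformation argument. For completeness, I would also sketch the mechanism, since the growth comparison $\gamma_0>1/(1-\alpha)$ is what the present paper relies on.

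\textbf{Step 1: the Bolle flow.} Using (i), (iii) and the $C^1$ regularity of $J$, I would build a pseudo-gradient vector field on $[0,1]\times H$ of the form $\partial_\vartheta+W(\vartheta,u)$. On regions with no critical points, its $H$-component compensates the $\vartheta$-variation of $J$. Estimate (iii) is what makes the flow well defined and globally existing on sets where $|J_\vartheta|$ is bounded: the $(\|u\|+1)$ factor gives at most linear growth of the vector field.

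\textbf{Step 2: the scalar comparison.} Following the flow from $\vartheta=0$ to $\vartheta=1$ transports odd deformations $\gamma\in\mathscr H$, which are admissible by (ii) and (v), into admissible classes for $J_1$. Anti-coercivity (v) on $H_k$ keeps $\gamma=\id$ near infinity, so the transported maps remain in the class. Suppose $J_1$ has no critical values above some level. Then the only obstruction to lowering $c_k$ comes from critical points of the intermediate $J_\vartheta$. At those points, (iv) bounds the level drift by the scalar ODE $\dot c=-C(1+|c|^\alpha)$.

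\textbf{Step 3: counting levels.} Integrating this ODE shows that the $\vartheta=0$ levels $c_k$ and the corresponding $\vartheta=1$ minimax levels satisfy $|c_k^{(1)}-c_k|\le C(1+|c_k|^\alpha)$ whenever no critical value of $J_1$ lies in the relevant window. A standard counting argument then shows that the windows $[c_k-C c_k^\alpha,\,c_k+Cc_k^\alpha]$ cannot all be pairwise linked without violating the growth $c_k\ge C_1k^{\gamma_0}-C_2$. This is where $\gamma_0>1/(1-\alpha)$ enters. The key summability is that $\sum_k c_{k+1}-c_k$ must outpace $\sum_k c_k^{\alpha}\sim k^{\gamma_0\alpha+1}$, and $\gamma_0>\gamma_0\alpha+1$ holds exactly under this condition. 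Hence $J_1$ has infinitely many critical values $\ge$ levels tending to $+\infty$.

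The main obstacle I expect is the construction in Step~1. One must ensure the flow preserves oddness at $\vartheta=0$ and stays within $\mathscr H$, which requires (v) to be uniform in $\vartheta$. Since this is precisely the content of the cited theorem, I would invoke it directly and verify only that the $C^1$ joint Palais--Smale formulation there matches (i).
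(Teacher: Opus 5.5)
Your plan matches the paper's: the paper likewise does not reprove the deformation theorem but invokes \citet[Theorem~3.1]{SalvatoreSquassina2003} with trivial initial space, and in \cref{rem:velocity_fields} it records your scalar comparison, namely that the admissible velocity fields $\mp C_*(1+s^2)^{\alpha/2}$ have flows $s+O(1+s^\alpha)$, so growth with $\gamma_0>1/(1-\alpha)$ dominates the drift. The one point to tighten is your counting heuristic, which treats $c_k\sim k^{\gamma_0}$ as two-sided: cleanly, if $J_1$ had no critical values above some level, Bolle's alternative would force $c_{k+1}-c_k\le C(1+c_{k+1}^\alpha)$ for all large $k$, and monotonicity of $(c_k)$ then gives $c_k^{1-\alpha}\le Ck$, contradicting $c_k\ge C_1k^{\gamma_0}-C_2$.
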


\begin{remark}
\label{rem:velocity_fields}
After increasing the constant if necessary, the critical velocity in
hypothesis~(iv) is controlled by
\[
 \varrho_1(\vartheta,s) \coloneqq -C_*(1+s^2)^{\alpha/2},
 \qquad
 \varrho_2(\vartheta,s) \coloneqq C_*(1+s^2)^{\alpha/2}.
\]
They are continuous and globally Lipschitz in $s$ because $0\le\alpha<1$,
so they are admissible velocity fields in the Chambers--Ghoussoub--Bolle
theorem. Their scalar flows are
$\psi_i(1,s)=s+O(1+s^\alpha)$ as $s\to+\infty$, so the deformation can
shift a level of size $s$ by order at most $s^\alpha$ over the unit parameter
interval. Mere divergence of the initial levels is therefore not sufficient;
the quantitative growth condition in the theorem makes the initial minimax
levels dominate this distortion and yields endpoint critical values unbounded
above.
\end{remark}

It remains to verify the critical-level assumption
\eqref{eq:bolle_levels_assumption}. This is done in
\cref{prop:reduced_level_growth} with $\gamma_0=2$, after the following
technical result.

\begin{lemma}
\label{lem:reduced_intersection_all_radii}
With the notations of~\cref{thm:polynomial_deformation}, for every $k\ge1$, every $\gamma\in\mathscr H$, and every $\rho>0$,
\[
 \gamma(H_k)\cap Z_{k-1}\cap\{x\in H:\|x\|=\rho\}\ne\varnothing,
 \qquad Z_{k-1}=H_{k-1}^{\perp}.
\]
\end{lemma}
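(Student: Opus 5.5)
The plan is to use a Borsuk--Ulam type argument. The standard tool is the Borsuk--Ulam theorem in the form: an odd continuous map from the boundary of a bounded symmetric neighbourhood of $0$ in $\RR^k$ into $\RR^{k-1}$ has a zero.

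First, fix $\gamma\in\mathscr H$ and $\rho>0$. By definition, there is $R_\gamma>0$ with $\gamma(u)=u$ whenever $\|u\|\ge R_\gamma$. Let $P_{k-1}$ be the orthogonal projection of $H$ onto $H_{k-1}$, so that $Z_{k-1}=\ker P_{k-1}$. Consider the set
\[
 \mathcal O\coloneqq\bigl\{u\in H_k \ : \ \|\gamma(u)\|<\rho\bigr\}.
\]
Continuity of $\gamma$ makes $\mathcal O$ open in $H_k\cong\RR^k$. Oddness of $\gamma$ makes $\mathcal O$ symmetric, and gives $0\in\mathcal O$ because $\gamma(0)=-\gamma(0)=0$. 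It remains to see that $\mathcal O$ is bounded. If $\|u\|\ge\max\{R_\gamma,\rho\}$, then $\|\gamma(u)\|=\|u\|\ge\rho$, so $u\notin\mathcal O$. Hence $\mathcal O$ is a bounded symmetric open neighbourhood of $0$ in $H_k$, and its boundary satisfies
\[
 \partial\mathcal O\subset\{u\in H_k \ : \ \|\gamma(u)\|=\rho\}
\]
by continuity.

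Next, define $\Lambda\coloneqq P_{k-1}\circ\gamma|_{\partial\mathcal O}:\partial\mathcal O\to H_{k-1}\cong\RR^{k-1}$. This map is continuous and odd. By the Borsuk--Ulam theorem it has a zero $u_0\in\partial\mathcal O$. Then $\gamma(u_0)\in\gamma(H_k)\cap Z_{k-1}$ and $\|\gamma(u_0)\|=\rho$, which is the claimed nonempty intersection. For $k=1$ the argument degenerates harmlessly: $H_0=\{0\}$, $Z_0=H$, and any point of $\partial\mathcal O$ works. This point exists because $\mathcal O$ is a bounded nonempty open subset of $\RR$, so it cannot be all of $\RR$.

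The only step needing care is the boundary inclusion $\partial\mathcal O\subset\{\|\gamma\|=\rho\}$, together with the applicability of Borsuk--Ulam to $\partial\mathcal O$ rather than a round sphere. For the first point: points of $\partial\mathcal O$ are limits of points with $\|\gamma\|<\rho$, so $\|\gamma\|\le\rho$, and they are not in the open set $\mathcal O$, so $\|\gamma\|\ge\rho$. For the second, I will quote the version of Borsuk--Ulam for boundaries of bounded symmetric open neighbourhoods of the origin in $\RR^k$, which is the standard genus fact $\operatorname{gen}(\partial\mathcal O)=k$. Equivalently, if $\Lambda$ had no zero, $\Lambda/|\Lambda|$ would give an odd map $\partial\mathcal O\to\Sphere^{k-2}$, contradicting this genus statement.
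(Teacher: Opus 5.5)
Your proof is correct and is essentially the paper's argument. The paper treats $\rho\ge R_\gamma$ directly via $\gamma(\rho e_k)=\rho e_k$, and for $\rho<R_\gamma$ it quotes Rabinowitz's sphere-intersection lemma (Proposition~9.23), whose standard proof is exactly your Borsuk--Ulam/genus argument on $\{u\in H_k:\|\gamma(u)\|<\rho\}$; you prove that lemma inline, and bounding the set by $\max\{R_\gamma,\rho\}$ removes the case distinction.
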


\begin{proof}
Take $R_\gamma>0$ so that $\gamma(u)=u$ whenever
$\|u\|\ge R_\gamma$. If $\rho\ge R_\gamma$, then
$\gamma(\rho e_k)=\rho e_k$, and $e_k\in Z_{k-1}$. If
$0<\rho<R_\gamma$, restrict $\gamma$ to the ball of radius $R_\gamma$, i.e.
\[
 D_{k,\gamma} = \bigl\{u\in H_k \ : \ \|u\|\le R_\gamma \bigr\}.
\]
This restriction is odd and equals the identity on
$\partial D_{k,\gamma}$. The standard sphere-intersection lemma for odd maps
that equal the identity on the boundary of a finite-dimensional ball,
\citep[Proposition~9.23]{Rabinowitz1986}, applied with the finite-dimensional
space $H_k$, the complementary subspace $Z_{k-1}$, the boundary radius
$R_\gamma$, and the target radius $\rho<R_\gamma$, gives a point
$u\in D_{k,\gamma}$ such that
\[
  \gamma(u)\in Z_{k-1},\qquad \|\gamma(u)\|=\rho.
\]
This concludes the proof.
\end{proof}

\begin{proposition}
\label{prop:reduced_level_growth}
Let $X=X_{\rm pw}$, let $2<p<3$, and choose the positive basis supplied by
\cref{lem:nested_tail_basis}. The levels \eqref{eq:bolle_levels} of the
reduced symmetric functional $J_0$ satisfy
\begin{equation}\label{eq:reduced_quadratic_growth}
 c_k\ge C_1k^2-C_2.
\end{equation}
\end{proposition}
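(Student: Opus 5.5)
We plan to bound $c_k$ from below through the sphere-intersection \cref{lem:reduced_intersection_all_radii}, combined with a tail estimate for $J_0$ on $Z_{k-1}$ that improves \cref{prop:bn} into a quantitative statement. Fix $k$, $\gamma\in\mathscr H$ and a radius $\rho_k>0$ to be chosen. By \cref{lem:reduced_intersection_all_radii} there is $u\in H_k$ with $x\coloneqq\gamma(u)\in Z_{k-1}$ and $\|x\|=\rho_k$. Therefore
\[
\sup_{\gamma(H_k)}J_0\ \ge\ \inf\bigl\{J_0(x):x\in Z_{k-1},\ \|x\|=\rho_k\bigr\},
\]
and since $\gamma$ is arbitrary, it suffices to bound the right-hand side below by $C_1k^2-C_2$.

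For the pointwise lower bound we use the maximizing property of the fiber map: $J_0(x)=\max_{v\in X^-}E_0(x+v)\ge E_0(x)$. Because $x\in X^+$ has $x^-=0$, this gives
\[
J_0(x)\ \ge\ \tfrac12\|x\|^2-\tfrac{b_1}{p}\|x\|_{L^p}^p .
\]
This is the step where the exact-power core pays off: there is no quadratic part of $F_0$ to absorb, so \eqref{eq:F0_eps_growth} is not needed. Next we control $\|x\|_{L^p}$ on the tail. \cref{lem:finite_angular_width_transfer} turns the radial approximation-number bound $a_j(T)\le Cj^{-\alpha_0}$ of \citet{DotaSkrzypczak2025} into the same bound for $X^+\hookrightarrow L^p$; here we regard the space as real, and the realification only changes constants. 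The basis in the statement is the one supplied by \cref{lem:nested_tail_basis}, so it gives
\[
\beta_{k-1}\coloneqq\sup\{\|x\|_{L^p}:x\in Z_{k-1},\ \|x\|=1\}\le C'k^{-\alpha_0}.
\]
We then optimize as in \cref{prop:bn}. The choice $\rho_k=(b_1\beta_{k-1}^p)^{-1/(p-2)}$ yields
\[
J_0(x)\ \ge\ \Bigl(\tfrac12-\tfrac1p\Bigr)\rho_k^2\ \ge\ c\,\beta_{k-1}^{-2p/(p-2)}\ \ge\ c\,k^{2p\alpha_0/(p-2)},
\]
for all $k\ge2$, and the finitely many small $k$ are absorbed into $C_2$.

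The main obstacle is the exponent bookkeeping: we need $2p\alpha_0/(p-2)\ge2$, that is, $\alpha_0\ge (p-2)/p=1/2-(3-p)/p\cdot\ldots$, or more simply $\alpha_0\ge 1-2/p$. The plan is to check that the radial theorem of \citet{DotaSkrzypczak2025} for $RH^{1/2}_2(\RR^3)\hookrightarrow RL^p(\RR^3)$ with $2<p<3$ gives a decay exponent of at least $1-2/p$. Heuristically, the radial problem is one-dimensional with a three-dimensional smoothness gap: the exponent should be $s-d(1/2-1/p)$ with effective $s=1/2$, which for radial functions improves to $\tfrac12-\tfrac{1}{2}+\tfrac1p$-type values. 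If the available exponent exceeds the requirement we obtain a higher power $k^{\gamma}$ with $\gamma\ge2$, which implies \eqref{eq:reduced_quadratic_growth} after adjusting constants. If instead the stated exponent falls short for part of the range of $p$, the fallback is to exploit the coercive quadratic term more carefully, or to quote the sharp Gelfand-number version of the radial estimate. This exponent check is the single point at which the argument could fail.
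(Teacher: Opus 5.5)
Your skeleton is the same as the paper's proof: sphere intersection in $Z_{k-1}$ at an arbitrary radius, the bound $J_0(x)\ge E_0(x)\ge\tfrac12\|x\|^2-C\|x\|_{L^p}^p$ on $X^+$, the tail constant $\beta_{k-1}$ from the nested basis, and the optimisation in $\rho$, which gives $c_k\ge c\,\beta_{k-1}^{-2p/(p-2)}-C$. The gap is that the whole quantitative content of the proposition lies in the one step you leave open: the decay exponent $\alpha_0$ in $\beta_k\lesssim k^{-\alpha_0}$. As the remark on velocity fields explains, divergence of the levels is not enough; you need exactly $\alpha_0\ge(p-2)/p$. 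Your heuristic does not supply this. The quantity $s-d(\tfrac12-\tfrac1p)$ with $s=\tfrac12$, $d=3$ equals $3/p-1$, which is the compactness parameter $\delta$ of the embedding, not a decay rate. Had it been the rate, your requirement $3/p-1\ge1-2/p$ would fail for $5/2<p<3$.

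The missing input, which the paper supplies, is the radial approximation-number theorem of Dota--Skrzypczak (Theorem 1 and Remark 1). The paper applies it to $RH^{1/2}_2(\RR^3)\hookrightarrow RL^p(\RR^3)$ with $\gamma_1=d=3$, $s_1=\tfrac12$, $s_2=0$, $p_1=2$, $p_2=p$ and $m=n=1$. One checks that $\delta=3/p-1>0$, which is exactly the condition $p<3$. One is in the regime $2\le p_1<p_2$, and $n=1$ removes the logarithmic factor. The exponent is then $(\gamma_1-1)(\tfrac1{p_1}-\tfrac1{p_2})=2(\tfrac12-\tfrac1p)=\tfrac{p-2}{p}$.

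Heuristically, this rate is set by the Strauss-type gain at infinity, not by the smoothness gap. On the annulus of radius $R$, a radial profile has about $R$ degrees of freedom at unit frequency, each with $\|\cdot\|_{L^p}/\|\cdot\|_{H^{1/2}}\sim R^{2/p-1}$. For $p<3$ this spatial tail dominates the high-frequency count.

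Transferred through the finite-angular-width lemma and the nested-tail lemma, this gives $\beta_{k-1}\lesssim k^{-(p-2)/p}$. That meets your threshold $1-2/p$ with equality, so $\beta_{k-1}^{-2p/(p-2)}\gtrsim k^2$ with nothing to spare. For the same reason your fallbacks would not help:
\begin{itemize}
\item for an operator defined on a Hilbert space, Gelfand numbers coincide with approximation numbers, so quoting a Gelfand-number estimate gains nothing;
\item the optimisation in $\rho$ already extracts everything contained in $\tfrac12\rho^2-C\beta_{k-1}^p\rho^p$, so there is nothing more to get from the quadratic term.
\end{itemize}
The exponent $2$ is exactly what this method produces, and it is what yields the threshold $\tau<p/2$ in the main theorem.
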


\begin{proof}
By \cref{lem:finite_angular_width_transfer,lem:nested_tail_basis} and
\citet[Theorem~1 and Remark~1]{DotaSkrzypczak2025}, applied to the radial
fractional Sobolev embedding
\[
  RH^{1/2}_2(\RR^3)\hookrightarrow RL^p(\RR^3),
\]
with parameters $m = n =1$, $\gamma_1=d=3$, $s_1=1/2$, $s_2 = 0$, $p_1=2$, and $p_2=p$, where $m$ and $n$ denote the block-counting parameters of the cited theorem
(the number of blocks and the number of blocks of least dimension), unrelated
to the mass in $\Dm$, the basis can be chosen so that
\begin{equation}\label{eq:reduced_beta_rate}
 \beta_k\coloneqq
 \sup_{\substack{x\in Z_k\\\|x\|=1}}\|x\|_{L^p}
 \le Ck^{-(p-2)/p}\qquad(k\ge1),
\end{equation}
where $Z_k=H_k^\perp$. Set also
\[
  \beta_0=\sup_{\|x\|=1}\|x\|_{L^p}.
\]
Indeed, the compactness parameter in that theorem is
\[
 \delta=s_1-s_2-d\left(\tfrac1{p_1}-\tfrac1{p_2}\right)
 =\tfrac12-3\left(\tfrac12-\tfrac1p\right)=\tfrac3p-1>0,
\]
which is precisely the range $p<3$. Since $2 < p$, the relevant case is
$2\le p_1<p_2\le\infty$. With
$1/\mathfrak p \coloneqq 1/p_1-1/p_2=1/2-1/p$, and with $n=1$ eliminating the logarithmic
factor, the exponent is
\[
 \tfrac{\gamma_1-1}{\mathfrak p}
 =2\left(\tfrac12-\tfrac1p\right)=\tfrac{p-2}{p}.
\]
By \cref{lem:reduced_intersection_all_radii}, for every
$\gamma\in\mathscr H$ and every $\rho>0$ there is
\[
 x_{\gamma,\rho}\in
 \gamma(H_k)\cap \bigl\{x\in Z_{k-1} \ : \ \|x\|=\rho \bigr\}.
\]
Since $h_0(x)$ maximizes the negative fiber,
\[
 J_0(x)\ge E_0(x) \ge \tfrac12\|x\|^2-C\|x\|_{L^p}^p.
\]
For $x=x_{\gamma,\rho}\in Z_{k-1}$, the definition of $\beta_{k-1}$ gives
$\|x\|_{L^p}\le \beta_{k-1}\rho$. Hence, for every $\gamma\in\mathscr H$ and
$\rho>0$,
\[
 \sup_{u\in\gamma(H_k)}J_0(u)
 \ge \tfrac12\rho^2-C\beta_{k-1}^p\rho^p.
\]
Since $\gamma$ was arbitrary, we may choose
\[
  \rho_k=
  \left(\frac{1}{2pC\beta_{k-1}^p}\right)^{1/(p-2)}.
\]
Then
\[
  C\beta_{k-1}^p\rho_k^p
  = C\beta_{k-1}^p\rho_k^2\rho_k^{p-2}
  =\frac1{2p}\rho_k^2,
\]
and therefore
\[
 \tfrac12\rho_k^2-C\beta_{k-1}^p\rho_k^p
 =\left(\tfrac12-\tfrac1{2p}\right)\rho_k^2
 =\frac{p-1}{2p}\rho_k^2.
\]
Consequently,
\[
 c_k\ge C\beta_{k-1}^{-2p/(p-2)}-C.
\]
For $k\ge2$, using \eqref{eq:reduced_beta_rate} gives
$c_k\ge C_1k^2-C_2$ after changing constants.  Enlarging $C_2$ covers
$k=1$ as well and proves \eqref{eq:reduced_quadratic_growth}. Finiteness of $c_k$ follows from \cref{lem:reduced_anti_coercivity} by taking
$\gamma$ equal to the identity.
\end{proof}

% =====================================================================

\section{Lower-order non-even partial-wave theorem}\label{sec:lower_order_reduced}
% =====================================================================

We can now prove the first main result: the existence of infinitely many
critical points for semiconvex lower-order perturbations.

\begin{theorem}[Existence] \label{thm:partialwave_lower_order}
Let $N=3$, $d=4$, $2<p<3$, and set $X=X_{\rm pw}$, where $X_{\rm pw}$ is the lowest partial-wave space with radial coefficients. Assume \textnormal{(V1)}--\textnormal{(V3)}, and assume that this space $X_{\rm pw}$ satisfies \textnormal{(X1)}--\textnormal{(X2)}. Suppose
\[
 F_0(x,\psi)=\tfrac{b(|x|)}p|\psi|^p, 
 \qquad b\in C_b([0,\infty);\RR), \quad 0<b_0\le b(r)\le b_1<\infty.
\]
Let $G$ satisfy \textnormal{(G1)}--\textnormal{(G5)} with exponent $\tau \in (1,p/2)$. Then the restricted functional $E|_{X_{\rm pw}}$ has a sequence of critical
points $u_k\in X_{\rm pw}$ such that
\[
 E(u_k)\to+\infty.
\]
If, in addition, the full perturbed energy satisfies the channel-invariance
condition \textnormal{(X3)}, i.e.
\[
  \nabla_\HH E(X_{\rm pw})\subset X_{\rm pw},
\]
then the spinors $u_k$ are weak solutions of the full Dirac equation~\eqref{eq:main}. 
\end{theorem}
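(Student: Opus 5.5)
The plan is to apply \cref{thm:polynomial_deformation} to the reduced family $J:[0,1]\times X^+\to\RR$ of \eqref{eq:reduced_path}, with $H=X^+$ (a separable real Hilbert space, infinite-dimensional by \textnormal{(X1)}), $\alpha=\tau/p$ and $\gamma_0=2$. Then we transfer the resulting critical points of $J_1$ back to $E|_{X_{\rm pw}}$ through \cref{prop:reduced_path}. Since $E_1=E$, a critical point $x_k$ of $J_1$ gives $u_k=x_k+h_1(x_k)$ with $E'(u_k)[\varphi]=0$ for all $\varphi\in X_{\rm pw}$, and $E(u_k)=J_1(x_k)$. Unbounded critical values of $J_1$ therefore yield $E(u_k)\to+\infty$.

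The first step is to check hypotheses (i)--(v) one by one using the results already established. By \cref{prop:reduced_path}, $J\in C^1([0,1]\times X^+)$ and the joint Palais--Smale condition (i) holds. Evenness of $J_0$ (ii) is \cref{rem:J0_even}, which rests on \textnormal{(F0)} for the exact-power core and on uniqueness of the fiber maximizer from \cref{lem:fibre_strong_concavity}; that uniqueness is where \textnormal{(G5)} and $\|\mu\|_\infty<\lambda_1$ enter. The velocity estimates (iii) and (iv), with $\alpha=\tau/p\in(0,1)$, are \eqref{eq:reduced_H2} and \eqref{eq:reduced_H3} from \cref{lem:reduced_velocity_bounds}. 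Uniform anti-coercivity (v) is \cref{lem:reduced_anti_coercivity}. The concrete model meets the standing hypotheses of \cref{sec:perturbation_symmetry}: the radial profile $b(|x|)$ is continuous, bounded and satisfies $b_0\le b\le b_1$, so \eqref{eq:exact_homogeneous_model} holds.

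The second step is the level growth \eqref{eq:bolle_levels_assumption}. We fix the orthonormal basis of $X^+$ from \cref{lem:nested_tail_basis}; the approximation-number bound on $X^+\hookrightarrow L^p$ needed there comes from \cref{lem:finite_angular_width_transfer} restricted to $X^+$. With this basis, \cref{prop:reduced_level_growth} gives $c_k\ge C_1k^2-C_2$, so $\gamma_0=2$. The threshold condition $\gamma_0>1/(1-\alpha)$ reads $2>p/(p-\tau)$, which is equivalent to $\tau<p/2$, exactly the assumption on $\tau$. One point to watch is that the levels $c_k$ in \cref{thm:polynomial_deformation} must be computed with the same basis used in \cref{prop:reduced_level_growth}; I will fix this basis once and for all before invoking the theorem. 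Given all this, \cref{thm:polynomial_deformation} produces critical values of $J_1$ tending to $+\infty$, and hence the sequence $u_k$.

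Finally, under \textnormal{(X3)}, \cref{prop:restricted_full_solution} (equivalently the last part of \cref{prop:reduced_path} with $\vartheta=1$) shows that each $u_k$ is critical on all of $\HH$, that is, a weak solution of \eqref{eq:main}. The main obstacle I expect is bookkeeping rather than new analysis: matching the class $\mathscr H$ and the basis in the abstract theorem to our reduced setting, and making sure the cited Salvatore--Squassina formulation, specialized to the trivial initial space, really requires only the listed hypotheses. In particular, the local velocity bound \eqref{eq:reduced_H2} must have the form that formulation needs, namely linear dependence on $\|J'_\vartheta(x)\|+1$ and $\|x\|+1$ on bounded energy sets. If it does not match exactly, I would rederive it from \eqref{eq:reduced_radial_identity} in the required form.
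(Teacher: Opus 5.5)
Your proposal is correct and follows the paper's proof essentially step for step: you verify hypotheses (i)--(v) of \cref{thm:polynomial_deformation} via \cref{prop:reduced_path}, \cref{rem:J0_even}, \cref{lem:reduced_velocity_bounds} and \cref{lem:reduced_anti_coercivity}, take the quadratic level growth from \cref{prop:reduced_level_growth} with the tail basis of \cref{lem:nested_tail_basis}, use the threshold $2>p/(p-\tau)\iff\tau<p/2$, and lift via $u_k=x_k+h_1(x_k)$, followed by the orthogonality argument under \textnormal{(X3)}. One small correction in attribution: evenness of $J_0$ does not use \textnormal{(G5)}, because at $\vartheta=0$ the perturbation is absent and fiber concavity comes only from $-\tfrac12\|v\|^2$ and convexity of the exact power; \textnormal{(G5)} is what makes the fiber reduction work for $\vartheta\in(0,1]$, in particular at the endpoint $\vartheta=1$.
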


\begin{remark}
The non-even perturbations in \cref{ex:Xpw_model_perturbation} give concrete
examples satisfying this additional invariance condition \textnormal{(X3)}.
For instance, under the radial hypotheses in \cref{cor:concrete_partial_wave_model}, the assumptions \textnormal{(X1)}--\textnormal{(X2)} on $X_{\rm pw}$ are satisfied.
\end{remark}

\begin{proof}
Set $X=X_{\rm pw}$. The exact-power choice of $F_0$ satisfies
\textnormal{(F0)}--\textnormal{(F4)} and is precisely the homogeneous model
used in \cref{sec:perturbation_symmetry}. By
\cref{lem:fibre_strong_concavity,prop:reduced_path}, critical points of the
reduced endpoint $J_1$ are in one-to-one correspondence with critical points
of the restriction $E|_{X_{\rm pw}}$.

The reduced path satisfies the joint PS condition by \cref{prop:reduced_path},
the local and critical velocity estimates by \cref{lem:reduced_velocity_bounds},
and finite-dimensional anti-coercivity by \cref{lem:reduced_anti_coercivity}.
Its initial functional is even by \cref{rem:J0_even}. We use on
$X_{\rm pw}^+$ the positive orthonormal basis supplied by
\cref{lem:nested_tail_basis}. With this choice,
\cref{prop:reduced_level_growth} gives quadratic initial levels. The critical velocity exponent is $\alpha=\tau/p$. Since
\[
 2>\tfrac1{1-\tau/p}
 \iff
 \tau<\tfrac p2,
\]
\cref{thm:polynomial_deformation} gives an unbounded sequence of critical
values of $J_1$. Let $x_k$ be corresponding critical points and set
\[
  u_k=x_k+h_1(x_k).
\]
Then $u_k\in X_{\rm pw}$, $u_k$ is critical for $E|_{X_{\rm pw}}$, and
\[
  E(u_k)=J_1(x_k)\to+\infty.
\]
If \textnormal{(X3)} holds, then $\nabla_\HH E(u_k)\in X_{\rm pw}$. Since
$u_k$ is critical for $E|_{X_{\rm pw}}$, we also have
$\nabla_\HH E(u_k)\perp_\HH X_{\rm pw}$. Hence
$\nabla_\HH E(u_k)=0$, so $u_k$ is a weak solution of
\eqref{eq:main}.
\end{proof}

% =====================================================================

\section{Renormalized even-quadratic perturbations}\label{sec:renormalized_quadratic}
% =====================================================================

A Hermitian operator-linear quadratic term is even in the spinor and can be
absorbed into the Dirac operator. Thus a quadratic contribution
\[
 -\tfrac12\int_{\RR^3}\RE\langle B(x)\psi,\psi\rangle \dx
\]
in the energy is incorporated into the linear part. The preceding reduction
then applies to the renormalized spectral splitting, provided the semiconvexity
bound is measured against the renormalized gap.

\subsection{Renormalizing an even quadratic perturbation}

Let $B:\RR^3\to\mathcal L(\CC^4)$ be a bounded measurable Hermitian matrix
field. As in \cref{sec:partialwaves}, write $\mathcal H_{\rm pw}\coloneqq\mathcal H_{-1,1/2}$ so that the lowest partial-wave subspace is given by
\[
X_{\rm pw}=H^{1/2}(\RR^3;\CC^4)\cap\mathcal H_{\rm pw}.
\]
We use the same notation $\PiPW$ for the bounded extensions of the angular
projection onto $\mathcal H_{\rm pw}$ to $H^{1/2}$ and $H^{-1/2}$.

\begin{definition}[Admissible operator] \label{def:admissible}
We say that $B$ is \emph{admissible on the lowest partial-wave space $X_{\rm pw}$} if
multiplication by $B$ leaves the underlying $L^2$ channel
$\mathcal H_{\rm pw}$ invariant and the operator
\begin{equation}\label{eq:renormalized_DB}
 \mathcal D_B=-\imath\alpha\cdot\nabla+m\beta+V_{\rm rad}(|x|)I_4-B(x)
\end{equation}
reduces $\mathcal H_{\rm pw}$. Its restriction to $\mathcal H_{\rm pw}$ is
required to have form domain $X_{\rm pw}$ with norm equivalent to the
$H^{1/2}$-norm, to satisfy the spectral gap condition
\begin{equation}\label{eq:DB_gap}
 \lambda_B \coloneqq \operatorname{dist}\bigl(0,\sigma(\mathcal D_B|_{\mathcal H_{\rm pw}})\bigr)>0,
\end{equation}
and to have infinite-dimensional positive and negative spectral subspaces.
\end{definition}

For a given admissible $B$, we consider the decomposition of $X_{\rm pw}$ and induced $\| \cdot \|_B$-norm corresponding to the renormalized operator~\eqref{eq:renormalized_DB} as
\[
 X_{\rm pw}=X_B^+\oplus X_B^-,
 \qquad
 \|u\|_B^2=\||\mathcal D_B|^{1/2}u\|_{L^2}^2.
\]

\begin{proposition}
\label{prop:concrete_admissible_B}
Assume that the radially symmetric potential $V_{\rm rad}$ satisfies
\[
V_{\rm rad} \in L^\infty\bigl((0,\infty);\RR\bigr),
 \qquad
 \lim_{R\to\infty}\|V_{\rm rad}\|_{L^\infty((R,\infty))}=0.
\]
Let $W_0,W_1\in L^\infty\bigl((0,\infty);\RR\bigr)$ be compactly supported radial
functions and set
\[
 B(x)=W_0(|x|)I_4+W_1(|x|)\beta.
\]
If the spectral gap condition holds, i.e.
\[
 0\notin\sigma\bigl((\mathcal D_m+V_{\rm rad}(|x|)I_4-B(x))
       |_{\mathcal H_{\rm pw}}\bigr),
\]
then $B$ is admissible on $X_{\rm pw}$.
\end{proposition}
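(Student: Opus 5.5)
We verify the items of \cref{def:admissible} one at a time, reusing the partial-wave machinery of \cref{prop:Xpw_X1_X2}.

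\emph{Channel invariance of $B$.} Since $W_0,W_1$ are radial scalars and $\beta=\mathrm{diag}(I_2,-I_2)$, multiplication by $B$ sends a spinor with coefficients $(u,v)$ in the form \eqref{eq:Xpw_definition} to the spinor with coefficients $\bigl((W_0+W_1)u,(W_0-W_1)v\bigr)$. The angular profile is therefore unchanged, and $B$ leaves $\mathcal H_{\rm pw}$ invariant. The same computation holds in every channel $\mathcal H_{\kappa,\mu}$. Hence $B$ commutes with all the angular projections.

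\emph{Reduction.} The operator $\Dmt$ reduces every $\mathcal H_{\kappa,\mu}$ by the partial-wave decomposition \cite[Section~4.6]{Thaller1992}. The operator $B$ is bounded and commutes with the channel projections. Therefore $\mathcal D_B=\Dmt-B$ is self-adjoint on $\dom(\Dmt)=H^1$, and it commutes with $\PiPW$ on this domain. This is exactly the statement that $\mathcal D_B$ reduces $\mathcal H_{\rm pw}$.

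\emph{Form domain and norm.} The key point is that $B$ is bounded, so $\mathcal D_B$ is a bounded perturbation of $\Dmt$ with the same operator domain and equivalent graph norms. We argue exactly as in \cref{sec:setting}:
\[
 [L^2,\dom(\mathcal D_B)]_{1/2}=\dom(|\mathcal D_B|^{1/2})=H^{1/2}
\]
with equivalent norms. On the channel, the gap \eqref{eq:DB_gap} is the hypothesis, and together with the interpolation identity it lets us replace the graph norm by $\|u\|_B=\||\mathcal D_B|^{1/2}u\|_{L^2}$. Intersecting with $\mathcal H_{\rm pw}$ gives form domain $\mathcal H_{\rm pw}\cap H^{1/2}=X_{\rm pw}$, using the identity proved in \cref{prop:Xpw_X1_X2}. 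Norm equivalence survives the restriction because reduction commutes the functional calculus with $\PiPW$. The gap constant $\lambda_B>0$ is simply the distance from $0$ to the closed spectrum. This is positive because $0$ lies in the resolvent set, which is open.

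\emph{Infinite-dimensional spectral subspaces.} This step is the main one. We repeat the Weyl argument of \cref{prop:Xpw_X1_X2} with the perturbation $V_{\rm rad}I_4-B$ in place of $V_{\rm rad}I_4$.

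The perturbation is bounded and decays at infinity in $L^\infty$, since $W_0$ and $W_1$ are compactly supported and $V_{\rm rad}$ has vanishing essential tail. We check relative compactness with respect to the free radial Dirac operator on $\mathcal H_{\rm pw}$. On a large ball we use Rellich's theorem from local $H^1$ to $L^2$. Outside the ball the perturbation has small $L^\infty$-norm. Consequently
\[
\sigma_{\rm ess}(\mathcal D_B|_{\mathcal H_{\rm pw}})=(-\infty,-m]\cup[m,\infty).
\]
Both branches are unbounded and $0$ lies in a gap. Therefore the spectral projections onto $(0,\infty)$ and $(-\infty,0)$ have infinite rank. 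Vectors with spectral support in bounded intervals of $(m,\infty)$ and $(-\infty,-m)$ lie in the form domain $X_{\rm pw}$. This shows that $X_B^\pm$ are infinite-dimensional.

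The only point needing care is the relative compactness near the origin, where the Rellich step uses local $H^1$ control. That control comes from the free radial Dirac graph norm, exactly as in the earlier proof.
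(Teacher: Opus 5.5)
Your proposal is correct and follows essentially the paper's route. You get channel invariance because $I_4$ and $\beta$ preserve the two angular components, equal operator and form domains from the bounded perturbation together with the identity $\dom(|T|^{1/2})=[L^2,\dom(T)]_{1/2}$, and infinite-rank spectral projections from Weyl's theorem, which gives $\sigma_{\rm ess}=(-\infty,-m]\cup[m,\infty)$. The only difference is that you treat $V_{\rm rad}I_4-B$ as a single decaying perturbation of the free channel operator; the paper instead proves compactness of the resolvent difference between $A_B$ and $A$ via the resolvent identity and the compact support of $B$, and handles $V_{\rm rad}$ separately by the local Rellich and tail argument.
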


\begin{proof}
For simplicity, set
\[
 A = (\mathcal D_m+V_{\rm rad}(|x|)I_4)|_{\mathcal H_{\rm pw}},
 \qquad
 A_B=A-B|_{\mathcal H_{\rm pw}}.
\]
Both $I_4$ and $\beta$ preserve the two fixed angular components of the
lowest channel, so multiplication by $B$ leaves $\mathcal H_{\rm pw}$
invariant and the full operator $\mathcal D_B$ reduces this $L^2$ channel.

\smallskip
\noindent\emph{Operator and form domains.}
The Clifford relations~\eqref{eq:clifford_relations} give, for $u\in H^1(\RR^3;\CC^4)$,
\[
 \|\mathcal D_m u\|_{L^2}^2
 =\|\nabla u\|_{L^2}^2+m^2\|u\|_{L^2}^2.
\]
Hence the free Dirac graph norm is equivalent to the $H^1$-norm, and the
bounded potential operator $V_{\rm rad} I_4$ does not change this fact. Thus, the domain of the operator $A$ is
\[
 \dom (A)=H^1(\RR^3;\CC^4)\cap\mathcal H_{\rm pw}
\]
with equivalent graph and $H^1$ norms. Since $B$ is bounded and Hermitian,
$A_B$ is self-adjoint on the same operator domain. Moreover, the two
graph norms are equivalent: there exists $C>0$ such that
\[
 C^{-1} \bigl(\|u\|+\|A_Bu\| \bigr) \le \|u\|+\|Au\|\le C \bigl(\|u\|+\|A_Bu\|\bigr).
\]
For a self-adjoint operator $T$, the spectral theorem identifies
$\bigl(L^2,\mathcal D(T)\bigr)$ with a couple of weighted $L^2$ spaces having
weights $1$ and $1+\lambda^2$, respectively. Complex interpolation of these weights
gives $(1+\lambda^2)^{1/2}$, which is equivalent to the real-valued
function $1+|\lambda|$. Consequently, one has
\[
 \dom \bigl(|T|^{1/2}\bigr) = \bigl[L^2,\dom (T)\bigr]_{1/2}
\]
with equivalence of the natural norms; see also
\citet[Chapter~V]{Kato1995}. Applying this identity to $A$ and $A_B$ gives
\[
 \dom \bigl(|A_B|^{1/2}\bigr)
 =\dom \bigl(|A|^{1/2}\bigr)
 =H^{1/2}(\RR^3;\CC^4)\cap\mathcal H_{\rm pw}
 =X_{\rm pw},
\]
and the corresponding form graph norms are equivalent. Once the spectral
gap for $A_B$ is established below, the homogeneous norm
$\||A_B|^{1/2}u\|_{L^2}$ is equivalent to the full form graph norm.

\smallskip
\noindent\emph{Relative compactness.}
Choose a ball $K$ containing the essential support of $B$. To prove that
$B(A-\imath)^{-1}$ is compact, let $(f_j)$ be bounded in
$\mathcal H_{\rm pw}$ and set
\[
u_j=(A-\imath)^{-1}f_j.
\]
Since $(A-\imath)^{-1}$ maps $\mathcal H_{\rm pw}$ continuously into
$\dom (A)=H^1(\RR^3;\CC^4)\cap\mathcal H_{\rm pw}$, the sequence
$(u_j)$ is bounded in $H^1(\RR^3;\CC^4)$. Hence the restrictions
$(u_j|_K)$ are bounded in $H^1(K;\CC^4)$, and the Rellich--Kondrachov theorem
(see, for instance, \citet[Theorem~6.3]{AdamsFournier2003}) yields a
subsequence converging strongly in $L^2(K;\CC^4)$. Since $B\in L^\infty$ is
supported in $K$, multiplication by $B$ is bounded on $L^2(K)$; hence
$(Bu_j)$ converges strongly in $L^2$. Thus
\[
B(A-\imath)^{-1}:\mathcal H_{\rm pw}
\longrightarrow\mathcal H_{\rm pw}
\]
is compact. The resolvent identity
\[
(A_B-\imath)^{-1}-(A-\imath)^{-1}
=(A_B-\imath)^{-1}B(A-\imath)^{-1}
\]
then shows that the resolvent difference is compact. By the essential $L^\infty$-tail condition on the potential $V_{\rm rad}$, the multiplication
operator $V_{\rm rad} I_4$ is relatively compact with respect to the free radial Dirac
operator by the same local Rellich and tail argument. Weyl's essential spectrum theorem (see, e.g.,
\citet[Theorem~XIII.14]{ReedSimon1978}) therefore gives
\[
\sigma_{\rm ess}(A_B)=\sigma_{\rm ess}(A)
=(-\infty,-m]\cup[m,\infty).
\]
Since $0\notin\sigma(A_B)$ and $\sigma(A_B)$ is closed,
\[
\lambda_B:=\operatorname{dist}(0,\sigma(A_B))>0.
\]
Thus the homogeneous form norm is equivalent to the $H^{1/2}$-norm, as
claimed above. Finally, the two nonempty unbounded components of the essential
spectrum imply that the positive and negative spectral projections of $A_B$
both have infinite rank. All requirements in the definition of admissibility
(\cref{def:admissible}) are therefore satisfied.
\end{proof}

\begin{remark}\label{rem:simple_B_gap}
In~\cref{prop:concrete_admissible_B}, the hypothesis is automatic if
\[
  \|V_{\rm rad}(|\cdot|)I_4-B\|_{L^\infty(\RR^3; \, \mathcal L(\CC^4))}<m.
\]
Indeed, with $W=V_{\rm rad} I_4-B$, one has
\[
  \mathcal D_B = \bigl(I+W\mathcal D_m^{-1}\bigr)\mathcal D_m,
  \qquad
  \|W\mathcal D_m^{-1}\|_{L^2\to L^2}\le \|W\|_\infty/m<1.
\]
Thus $\mathcal D_B$ is invertible on the full $L^2$ space, and therefore its
restriction to the reducing partial-wave channel $\mathcal H_{\rm pw}$ is
invertible as well.
\end{remark}

We now have all the ingredients to prove the main result of this section, i.e. the existence of infinitely many critical points under certain assumptions.

\begin{theorem} \label{thm:renormalized_even_quadratic}
Let $2<p<3$ and
\[
 F_0(x,\psi)=\tfrac{b(|x|)}p|\psi|^p,
 \qquad b\in C_b([0,\infty);\RR), \quad 0<b_0\le b(r)\le b_1<\infty.
\]
Let $B$ be admissible on $X_{\rm pw}$ (\cref{def:admissible}) and let
$H:\RR^3\times\CC^4\to\RR$ be a Carath\'eodory primitive, $C^1$ in the spinor
variable, with $H(x,0)=0$, satisfying the analogues of
\textnormal{(G1)}--\textnormal{(G4)} with $1<\tau<p/2$. Assume additionally
that, for some non-negative $\mu_B\in L^\infty(\RR^3)$,
\[
 \RE\langle h(x,z)-h(x,w),z-w\rangle
 \ge-\mu_B(x)|z-w|^2,
 \qquad
 \|\mu_B\|_\infty<\lambda_B,
\]
where $h=\nabla_\psi^{\RR}H$ and $\lambda_B$ is given by~\eqref{eq:DB_gap}.
Let $\PiPW$ be the angular projection
onto $\mathcal H_{\rm pw}$ and assume that, for every
$\psi\in X_{\rm pw}$,
\begin{equation}\label{eq:renormalized_channel_condition}
 \PiPW\bigl(b(|x|)|\psi|^{p-2}\psi+h(x,\psi)\bigr)
 =b(|x|)|\psi|^{p-2}\psi+h(x,\psi)
 \qquad\text{in }H^{-1/2}(\RR^3;\CC^4).
\end{equation}
For $\vartheta\in[0,1]$, define the renormalized path of energies
\begin{equation}\label{eq:renormalized_energy_path}
 E_{B,\vartheta}(u)
 =
 \tfrac12\|u_B^+\|_B^2-\tfrac12\|u_B^-\|_B^2
 -\int_{\RR^3}\tfrac{b(|x|)}p|u|^p\dx
 -\vartheta\int_{\RR^3}H(x,u)\dx.
\end{equation}
The endpoint $E_{B,1}$ is the renormalized energy associated with
$\mathcal D_B$.
Then
\begin{equation}\label{eq:renormalized_equation}
 \mathcal D_B\psi=b(|x|)|\psi|^{p-2}\psi+h(x,\psi)
\end{equation}
has infinitely many weak solutions $\psi_k\in X_{\rm pw}$ whose renormalized
energy values satisfy $E_{B,1}(\psi_k)\to+\infty$.
\end{theorem}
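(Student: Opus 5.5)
The plan is to rerun the proof of \cref{thm:partialwave_lower_order} verbatim, with the pair $(\Dmt,\lambda_1)$ replaced by $(\mathcal D_B|_{\mathcal H_{\rm pw}},\lambda_B)$ and with $G$ replaced by $H$. We work throughout on $X=X_{\rm pw}$, equipped with the norm $\|\cdot\|_B$. By \cref{def:admissible} this norm is equivalent to the $H^{1/2}$-norm, so \textnormal{(X2)} carries over from \cref{prop:Xpw_X1_X2} unchanged. The splitting $X_{\rm pw}=X_B^+\oplus X_B^-$ is given by the spectral projections of the self-adjoint operator $\mathcal D_B|_{\mathcal H_{\rm pw}}$. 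Because these projections commute with $|\mathcal D_B|^{1/2}$, the argument of \cref{prop:spectral_split} shows that the splitting is $\|\cdot\|_B$-orthogonal. Admissibility also makes both summands infinite-dimensional, which gives \textnormal{(X1)}. The spectral gap yields $\|u\|_B^2\ge\lambda_B\|u\|_{L^2}^2$, and this is the only place where the gap constant enters the fiber estimates. Since $\mathcal D_B$ is self-adjoint on the channel, the quadratic form $\tfrac12\|u_B^+\|_B^2-\tfrac12\|u_B^-\|_B^2$ equals $\tfrac12\langle\mathcal D_Bu,u\rangle$. This quadratic form is the linear part of $E$ from \eqref{eq:energy} minus the term $\tfrac12\int\RE\langle Bu,u\rangle$, so $E_{B,1}$ is the natural energy of \eqref{eq:renormalized_equation}.

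Next, every result of \cref{sec:perturbation_symmetry} is checked with the renormalized data. In \cref{lem:fibre_strong_concavity} the semiconvexity estimate now gives $\langle\Phi_H'(\xi+v)-\Phi_H'(\xi+w),v-w\rangle\ge-\|\mu_B\|_\infty\|v-w\|_{L^2}^2\ge-\kappa_B\|v-w\|_B^2$, where $\kappa_B=\|\mu_B\|_\infty/\lambda_B<1$. The exact-power core is still convex, so the fibers are uniformly strongly concave and the maximizer $h_{B,\vartheta}$ is well defined. \cref{prop:reduced_path}, \cref{lem:reduced_velocity_bounds} and \cref{lem:reduced_anti_coercivity} use only four ingredients: orthogonality of the splitting, exact $p$-homogeneity of the core, the analogues of \textnormal{(G2)}--\textnormal{(G4)} for $H$, and the compact embedding. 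All four hold here. In particular, the reduced family $J_{B,\vartheta}$ is $C^1$, satisfies the joint Palais--Smale condition and the velocity bounds with $\alpha=\tau/p$, and is uniformly anti-coercive. By uniqueness of the maximizer, $J_{B,0}$ is even, exactly as in \cref{rem:J0_even}.

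The step I expect to be the main obstacle is the level growth $c_k\ge C_1k^2-C_2$, which corresponds to \cref{prop:reduced_level_growth} but now on $X_B^+$. The approach is to apply \cref{lem:finite_angular_width_transfer}, whose final clause explicitly allows restriction to $X_B^+$ with an equivalent Hilbert norm. This gives approximation numbers of $X_B^+\hookrightarrow L^p$ bounded by $Cj^{-(p-2)/p}$. \cref{lem:nested_tail_basis} then supplies an orthonormal basis of $(X_B^+,\|\cdot\|_B)$ whose tails satisfy $\beta_k\le Ck^{-(p-2)/p}$. The sphere-intersection argument of \cref{lem:reduced_intersection_all_radii}, together with the bound $J_{B,0}(x)\ge E_{B,0}(x)\ge\tfrac12\|x\|_B^2-C\|x\|_{L^p}^p$ on $X_B^+$, gives $c_k\ge C\beta_{k-1}^{-2p/(p-2)}-C\ge C_1k^2-C_2$. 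Because $\tau<p/2$ is equivalent to $2>1/(1-\tau/p)$, \cref{thm:polynomial_deformation} yields critical points $x_k$ of $J_{B,1}$ whose critical values tend to $+\infty$. The spinors $\psi_k=x_k+h_{B,1}(x_k)$ are then critical for the restriction $E_{B,1}|_{X_{\rm pw}}$, and $E_{B,1}(\psi_k)\to+\infty$.

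It remains to upgrade these restricted critical points to weak solutions of \eqref{eq:renormalized_equation} tested against all of $H^{1/2}(\RR^3;\CC^4)$. Restricted criticality means that $\langle\mathcal D_B\psi_k-N(\psi_k),\varphi\rangle=0$ for every $\varphi\in X_{\rm pw}$, where $N(\psi)=b|\psi|^{p-2}\psi+h(\cdot,\psi)$. For a general test spinor $\varphi\in H^{1/2}$ we write $\varphi=\PiPW\varphi+(1-\PiPW)\varphi$ and show that the complementary part contributes nothing. First, $\mathcal D_B$ reduces $\mathcal H_{\rm pw}$, so $\mathcal D_B\psi_k$ lies in the channel as an element of $H^{-1/2}$ and its pairing with $(1-\PiPW)\varphi$ vanishes. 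Second, the channel condition \eqref{eq:renormalized_channel_condition} says that $N(\psi_k)$ also lies in the channel, so its pairing with $(1-\PiPW)\varphi$ vanishes as well. This is the argument of \cref{prop:Xpw_X3_criterion}, with boundedness of $\PiPW$ on $H^{\pm1/2}$ justifying the density extension.
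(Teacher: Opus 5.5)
Your proposal is correct and follows the paper's proof essentially step for step. You rerun the negative-fiber reduction with $\lambda_B$, $\|\cdot\|_B$ and $\kappa_B=\|\mu_B\|_\infty/\lambda_B$, get quadratic growth of the reduced symmetric levels from the transferred approximation numbers on $X_B^+$ together with the nested tail basis, and upgrade restricted critical points by showing that the residual $\mathcal D_B\psi_k-N(\psi_k)$ lies in the channel and hence annihilates $(1-\PiPW)\varphi$. The paper phrases that last step as $\PiPW R=R$ plus self-adjointness of $\PiPW$ in the $H^{-1/2}$--$H^{1/2}$ pairing, and the only input you leave implicit is the Dota--Skrzypczak radial approximation-number estimate that the transfer lemma takes as its hypothesis.
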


\begin{proof}
Let $h_{B,\vartheta}:X_B^+\to X_B^-$ be the negative-fiber maximizer for
$E_{B,\vartheta}$ and set
\[
  J_{B,\vartheta}(x)=E_{B,\vartheta}\bigl(x+h_{B,\vartheta}(x)\bigr).
\]
The renormalized splitting and norm have exactly the structural properties
used in \cref{sec:perturbation_symmetry}. The estimates of that section apply
verbatim to $E_{B,\vartheta}$, with $\lambda_1$ replaced by $\lambda_B$ and
$\|\cdot\|$ replaced by $\|\cdot\|_B$. The semiconvexity estimate
gives strong concavity of the renormalized negative fibers with
constant $1-\|\mu_B\|_\infty/\lambda_B>0$, and the reduced path satisfies
the joint PS condition, the local and critical velocity bounds, and
finite-dimensional anti-coercivity. At $\vartheta=0$, the functional
$E_{B,0}$ is even; uniqueness of the negative-fiber maximizer gives
$h_{B,0}(-x)=-h_{B,0}(x)$, so $J_{B,0}$ is even.

\smallskip
Choose the Hilbert basis $(e_{B,j}^+)$ of $X_B^+$ using
\cref{lem:nested_tail_basis} applied to the restricted embedding
$X_B^+\hookrightarrow L^p$. Set
\[
  Z_{B,k}=\overline{\operatorname{span}}\bigl\{e_{B,j}^+ \ : \ j\ge k+1 \bigr\},
  \qquad
  \beta_{B,k}=\sup_{\substack{u\in Z_{B,k}\\\|u\|_B=1}}\|u\|_{L^p}.
\]
Since $\|\cdot\|_B$ and the standard $H^{1/2}$-norm are equivalent on
$X_{\rm pw}$, the approximation numbers of $X_{\rm pw}\hookrightarrow L^p$
change only by a multiplicative constant. Restricting this embedding to the
closed subspace $X_B^+$ can only decrease approximation numbers, so the chosen
positive tail basis satisfies
\[
 \beta_{B,k}\le Ck^{-(p-2)/p}.
\]
Thus the reduced symmetric levels have quadratic growth. The velocity
exponent is $\tau/p<1/2$, and
\cref{thm:polynomial_deformation} applies to the renormalized reduced path.
The critical-point correspondence gives critical points of the energy
restricted to $X_{\rm pw}$ with $E_{B,1}$-values tending to $+\infty$. 
Let $u$ be one of these restricted critical points and set
\[
 R \coloneqq \mathcal D_B u - b(|x|)|u|^{p-2}u - h(x,u) \in H^{-1/2}.
\]
Restricted criticality gives $\langle R,\phi\rangle=0$ for every
$\phi\in X_{\rm pw}$. Since $\mathcal D_B$ reduces the underlying $L^2$
channel, its form operator commutes with $\PiPW$ between $H^{1/2}$ and
$H^{-1/2}$. Together with \eqref{eq:renormalized_channel_condition}, this
gives $\PiPW R=R$ in $H^{-1/2}$. Moreover, $\PiPW$ is self-adjoint for the
$H^{-1/2}$--$H^{1/2}$ duality pairing. Hence, for every
$\varphi\in H^{1/2}(\RR^3;\CC^4)$,
\[
  \langle R,\varphi\rangle
  =\langle \PiPW R,\varphi\rangle
  =\langle R,\PiPW\varphi\rangle.
\]
Since $\PiPW\varphi\in X_{\rm pw}$, restricted criticality gives
$\langle R,\PiPW\varphi\rangle=0$. Hence $R=0$, and the critical points are
full weak solutions of \eqref{eq:renormalized_equation}.
\end{proof}

\begin{remark}
\label{rem:concrete_renormalized_perturbations}
The theorem has fully concrete instances. For instance, let $B$ be of the form
\[
  B(x) = W_0(|x|)I_4 + W_1(|x|)\beta
\]
with $W_0,W_1\in L^\infty(0,\infty;\RR)$ compactly supported, and assume either the
resolvent condition in \cref{prop:concrete_admissible_B} or the smallness
condition in \cref{rem:simple_B_gap}. Then $B$ is
admissible (in the sense of~\cref{def:admissible}). For the lower-order primitive one may take exactly the
channel-preserving construction of \cref{ex:Xpw_model_perturbation}, namely
\[
  H(x,\psi)=
  \begin{cases}
   \varepsilon a(|x|)
   K_{\tau,\zeta} \left(\RE\inner{\eta(x/|x|)}{\psi}_{\CC^4}\right),&x\ne0,\\
   0,&x=0,
  \end{cases}
\]
where $a\in C_c^\infty((0,\infty))$, $a\ge0$, $\varepsilon>0$, $\zeta>0$,
$\zeta\ne1$, and $K_{\tau,\zeta}$ and $\eta$ are as in
\cref{ex:Xpw_model_perturbation}. This $H$ is non-even, satisfies the
renormalized analogues of \textnormal{(G1)}--\textnormal{(G4)}, satisfies the
semiconvexity condition with $\mu_B\equiv0$, and obeys
\eqref{eq:renormalized_channel_condition}.
\end{remark}

\begin{remark}
Condition \textnormal{(G5)} is automatic for convex perturbing primitives and
then imposes no restriction on their amplitude. A perturbation with a
concave component is also allowed, but its negative curvature must remain
strictly below the spectral gap in the sense of \textnormal{(G5)}. Without
such a fiber condition, uniqueness of the negative-fiber maximizer can fail,
and the positive-space reduction used here is no longer available.
\end{remark}

\bibliographystyle{plainnat}
\bibliography{cas-refs}

\end{document}